\numberwithin{equation}{section}
\newtheorem{theorem}{Theorem}[section]
\newtheorem{lemma}[theorem]{Lemma}
\newtheorem{remark}[theorem]{Remark}
\newtheorem{Example}[theorem]{Example}
\newtheorem{remarks}[theorem]{Remarks}
\newtheorem{assumptions}[theorem]{Assumptions}
\newcommand{\kpo}{_{k+1}}
\newcommand{\kk}{_{k}}
\newcommand{\kpon}{_{k+1}^{i,N}}
\newcommand{\kn}{_{k}^{i,N}}
\newcommand{\bn}{^{(N)}}
\newcommand{\nors}[1]{\|#1\|_s}
\newcommand{\ra}{\rightarrow}
\newcommand{\lra}{\longrightarrow}
\newcommand{\kjn}{^{j,N}_k}
\newcommand{\kkn}{_k^N}
\newcommand{\inn}{^{i,N}}
\newcommand{\jnn}{^{j,N}}
\newcommand{\lv}{\left\vert}
\newcommand{\rv}{\right\vert}
\newcommand{\pa}{\partial}
\newcommand{\dd}{,\dots,}
\newcommand{\be}{\begin{equation}}
\newcommand{\ee}{\end{equation}}
\newcommand{\PP}{\mathcal{P}}
\newcommand{\lan}{\langle}
\newcommand{\ran}{\rangle}
\newcommand{\les}{\lesssim}
\newcommand{\hsint}{\mathcal{H}^s_{\cap}}
\newcommand{\hsintt}{\mathcal{H}^s_{\cap\cap}}
\newcommand{\tr}{\textup{Trace}}
\newcommand{\cl}{\mathcal{L}}
\newcommand{\N}{\mathbb{N}}
\newcommand{\R}{\mathbb{R}}
\newcommand{\hs}{\mathcal{H}^s}
\newcommand{\h}{\mathcal{H}}
\newcommand{\beq}{\begin{equs}}
\newcommand{\eq}{\end{equs}}
\newcommand{\al}{A_{\ell}}
\newcommand{\dl}{D_{\ell}}
\newcommand{\Gl}{\Gamma_{\ell}}
\newcommand{\el}{_{\ell}}
\newcommand{\C}{\mathcal{C}}
\newcommand{\EE}{\mathbb{E}}
\newcommand{\cN}{\mathcal{N}}
\newcommand{\dist}{\stackrel{\mathcal{D}}{\sim}}
\newcommand{\exo}{\EE_{x_0}}
\newcommand{\xs}{x^{\sharp}}
\newcommand{\xdag}{x^{\dag}}
\newcommand{\Ws}{W^{\sharp}}
\newcommand{\Wdag}{W^{\dag}}
\newcommand{\Ss}{\mathfrak{S}^{\sharp}}
\newcommand{\Sdag}{\mathfrak{S}^{\dag}}
\newcommand{\ws}{w^{\sharp}}
\newcommand{\wdag}{w^{\dag}}
\begin{document}

\begin{frontmatter}
\title{Diffusion Limit For The  Random Walk Metropolis Algorithm  Out Of stationarity\protect}

\runtitle{RWM out of stationarity}

\begin{aug}
\author{ \snm{Juan Kuntz,}\corref{}\ead[label=e2]{juan.kuntz08@imperial.ac.uk}}
  \author{ \snm{Michela Ottobre}\corref{}\ead[label=e1]{m.ottobre@hw.ac.uk}}
\and
  \author{ \snm{Andrew M. Stuart}\thanksref{t5}\ead[label=e3]{astuart@caltech.edu}}

\thankstext{t5}{Supported by ERC and EPSRC}

\runauthor{J. Kuntz, M.Ottobre and A.M. Stuart}

  \affiliation{Imperial College, Heriot Watt University and Warwick University}

 \address{ Michela Ottobre\\ Heriot Watt University, 
 Mathematics Department, \\
Edinburgh, EH14 4AS, UK \\
\printead{e1}}
  \address{ Juan Kuntz\\
Imperial College London,\\
   SW7 2AZ, London,  UK,\\
          \printead{e2}}
 \address{ Andrew M. Stuart\\
Department of Computing and Mathematical Sciences, \\
California Institute of Technology, CA 91125, USA
          \printead{e3}}
\end{aug}

\begin{abstract}

 The Random Walk Metropolis (RWM) algorithm is a Metropolis-Hastings Markov Chain Monte Carlo algorithm designed to sample from a given target distribution $\pi^N$ with Lebesgue density on $\R^N$.  Like any other Metropolis-Hastings algorithm, RWM constructs a Markov chain by randomly proposing a new position (the ``proposal move"), which is then accepted or rejected according to a rule which makes the chain reversible with respect to $\pi^N$.  When the dimension $N$ is large a key 
question is to determine the optimal scaling with $N$ of the proposal variance: if the proposal variance is too large, the algorithm will reject the proposed moves too often; if it is too small, the algorithm will explore the state space too slowly. Determining the optimal scaling of the proposal variance gives a measure of the cost of the algorithm as well.  One approach to tackle this issue, which we adopt here, is to 
derive diffusion limits for the algorithm. Such an approach has been proposed in the seminal papers \cite{MR1428751,Robe:Rose:98}; in particular in \cite{MR1428751} the authors derive a diffusion limit for the RWM algorithm under the two following assumptions: i) the algorithm is started in stationarity; ii) the target measure $\pi^N$ is in product form. The present paper considers the situation of practical interest in which both assumptions i) and ii) are removed. That is a) we study the case (which occurs in practice) in which the algorithm is started out of stationarity and b) we consider target measures which are in non-product form. In particular,  we work in the setting in which 
families of measures on spaces of increasing dimension are found by 
approximating a measure, on an infinite dimensional Hilbert space, which is defined by its density with respect to a Gaussian. {The target measures that we consider arise in Bayesian nonparametric statistics and in the study of conditioned diffusions.} 
We prove that, out of stationarity, the optimal scaling for the proposal variance
is $O(N^{-1})$, as it is in stationarity. In this optimal scaling a diffusion 
limit is obtained and the cost of reaching and exploring the invariant measure
scales as $O(N)$. Notice that the optimal scaling
in and out of stationatity need not be the same in general, and indeed they differ e.g. 
in the case of the MALA algorithm \cite{Kuntz2016}.
\end{abstract}

\begin{keyword}[class=MSC]
\kwd[Primary ]{60J22}
\kwd[; secondary ]{60J20, 60H10}
\end{keyword}

\begin{keyword}
\kwd{Markov Chain Monte Carlo, Random Walk Metropolis algorithm, diffusion limit, optimal scaling}
\end{keyword}

\end{frontmatter}

\maketitle


\section{Introduction}

\subsection{Setting and Main Result}  
Metropolis-Hastings algorithms are popular MCMC methods used to sample from a given target measure, $\pi^N$, defined via its density with respect to Lebesgue
measure on $\R^N$  (with abuse of notation, we
often denote both the measure and the density with the same letter).  The basic mechanism consists of employing a proposal transition density $q(x,y)$ in order to produce a reversible chain $\{x_k\}_{k=0}^{\infty}$ which has the target measure as invariant distribution \cite{MR1620401}. At step $k$ of the chain, a proposal move $y_{k+1}$ is generated by using a proposal kernel $q(x,y)$, i.e. 
$y_{k+1} \sim q(x_k, \cdot)$. Then such a move is accepted with probability $\alpha(x_k, y_{k+1})$, where
$$
\alpha(x,y)= \min\left\{1, \frac{\pi(y) q(y,x)}{\pi(x) q(x,y)}  \right\}\,.
$$
If the move is accepted then the chain is updated  to the state $x\kpo:=y\kpo$, otherwise $x\kpo:=x_k$. When the proposal kernel $q(x,y)$ is symmetric in its variables,  the expression for the acceptance probability simplifies to 
$$
\alpha(x,y)= \min\left\{1, \frac{\pi(y)}{\pi(x)}  \right\}\,.
$$
Random Walk Metropolis (RWM) belongs to the family of Metropolis-Hastings algorithms with symmetric proposal, as the proposal move is generated according to a random walk. A key question for Metropolis-Hastings methods in general, and for
RWM in particular, is to determine the cost of the algorithm
as a function of the dimension $N$. The present paper aims at studying the cost of the RWM algorithm by the
use of diffusion limits. Precisely, we identify scalings of the
proposal variance with resepct to the dimension $N$ which lead to a diffiusion
limit. Since the inverse proposal variance has the interpretation as a time-step
in a discretization of the limiting diffusion, this scaling determines the
number of steps required to reach and explore the desired target distribution.
 We study the situation of practical interest where the algorithm
 is started out of stationarity and the target measure is in non-product form.

 In what follows we first introduce the class of target measures that we will be considering and we then specify the RWM algorithm for such a class of targets (more details on the algorithm and on the class of target measures  can be found in Section \ref{sec:algor} and in Section \ref{sec:2}, respectively). We then clarify the problem that is the  subject of the paper, we present our main result and, immediately after (see Remark \ref{rem:onsindepofx}), we explain the practical implications of such a result in terms of cost of the algorithm (in this context we will specify what we mean by ``cost of the algorithm"). 

The class of target measures  that we consider  are determined
by approximations of a measure on an infinite dimensional Hilbert space. In particular, let $\pi$ be a probability measure  defined on an infinite dimensional separable Hilbert space ($\h, \langle \cdot, \cdot \rangle, \| \cdot \|$)  and absolutely continuous with respect to a Gaussian measure $\pi_0$ with mean zero and covariance operator $\C$:
\be\label{targetmeasure}
\frac{d\pi}{d\pi_0}  \propto \exp({-\Psi}), \qquad \pi_0 \dist \cN(0,\C),
\ee
where $\Psi: \tilde{\h}\ra \R$ is some real valued functional with domain $\tilde{\h} \subseteq \h$ and $\pi_0(\tilde{\h})=1.$ In Section \ref{sec:2} we will detail our assumptions on $\Psi$ and give the precise definition of the space $\tilde{\h}$ and identify it with an appropriate Sobolev-like subspace of $\h$ (denoted by $\h^s$ in Section \ref{sec:2}).    The covariance operator $\C$ is a positive, self-adjoint, trace class operator on 
$\h$, with eigenbasis $\{\lambda_j^2, \phi_j\}_{j \in \N} $:
\be\label{cphi}
\C \phi_j= \lambda_j^2 \phi_j, \quad \forall j \in\mathbb{N},
\ee
where $\{\phi_j\}_{j \in \N}$ is an orthonormal basis of $\h$. 
We will analyse the RWM algorithm designed to sample from the finite dimensional projections $\pi^N$
 of the measure \eqref{targetmeasure} {on the space} 
\be\label{eq:subspace}
\h \supset X^N:=\textrm{span}\{\phi_j\}_{j=1}^N 
 \ee 
spanned by the first $N$ eigenvectors of the covariance operator. {Notice that the space $X^N$ is isomorphic to $\R^N$.  To clarify this further, we need to introduce some notation. 
Given a point $x \in \h$,  $x^N:=\PP^N(x)$ is the projection of $x$
onto the space $X^N$; $x^{i,N}$ will be the $i$-th component of the vector $x^N\in \R^N$, i.e. $x\inn=\langle \phi_i, x^N\rangle$. \footnote{Notice that if $x^N=\PP^N(x)$ and $1 \leq i \leq N$ then $x\inn=\langle \phi_i, x^N\rangle= \lan \phi_i, x\ran$.} Similar 
notation is also used for $y, \xi$ and other vectors; we do not give details.
We will also denote  $\Psi^N(x):=\Psi(\PP^N(x))$ and $\C_N$ will be, effectively,  an $N\times N$ diagonal matrix with $i$-th diagonal component equal to $\lambda_i^2$. More formally,}
\be\label{defpsiNCN}
\Psi^N:=\Psi\circ \PP^N \quad \mbox{and} \quad \C_N:=\PP^N\circ \C\circ \PP^N. \ee
With this notation in place,  our target measure is the measure $\pi^N$ (on $X^N \cong \R^N $) defined as 
\be\label{piN}
\frac{d\pi^N}{d\pi_0^N}(x)=M_{\Psi^N}e^{-\Psi^N(x)},  \qquad \pi_0^N \sim\cN(0,\C_N), 
\ee
where $M_{\Psi^N}$ is a normalization constant. Notice that the sequence of measures $\{\pi^N\}_{N\in \N}$ approximates the measure $\pi $ (in particular, the sequence $\{\pi^N\}_{N\in \N}$ convereges to $\pi$ in the Hellinger metric \cite{Stua:10}).

 Letting $\ell>0$ denote a positive parameter,  consider the RWM algorithm with proposal 
\be\label{proposal}
y=x+\sqrt{\frac{2 \ell^2}{N}}\C_N^{1/2}\xi^N,  \qquad  \xi^N= \sum_{j=1}^N \xi\jnn \phi_j, \quad 
\xi\jnn \dist \cN(0,1) \,\, \mbox{i.i.d.}.
\ee
  The current position $x$ and the proposal $y$ belong to $\h$; however, because the noise is finite dimensional, effectively only the first $N$ components of $x$ are modified when a proposal is accepted, 
namely the components belonging to $X^N$.  

Using the proposal \eqref{proposal} we construct the RWM - Markov chain $\{x\kk\}_{k}\subset \h$, through the ``accept-reject" mechanism described earlier. In computational practice one uses the projected chain $x^N_k=\PP^N(x_k)$, which samples from the measure $\pi^N$, i.e.  for any fixed $N \in \N$, the chain $\{x_k^N\}_{k\in \N}\subset X^N$ can be used to sample from the measure $\pi^N$. However  we often work in $\h$ rather than in $X^N$ (and therefore consider the chain $\{x_k\}_{k\in \N }$ rather than the chain $\{x_k^N\}_{k\in \N}$) only because in $\h$ the analysis is cleaner.  

To explain the problem at hand consider for a moment, instead of the proposal \eqref{proposal}, the following proposal:
\be\label{propbeta}
y=x+\sqrt{\frac{2 \ell^2}{N^{\beta}}}\C_N^{1/2}\xi^N,  
\ee
where $\beta>0$ is a positive parameter to be chosen. As is well known, if $\beta$ is ``too large" then the proposal variance (that is, informally, the size of the jumps of the chain) is ``too small", therefore the algorithm will move in state space very slowly. On the other hand, if $\beta$ is ``too small" then the proposal variance is too large and the algorithm will tend to reject the proposed moves too frequently (and this is more and more the case as the dimension $N$ increases). We will show that the value of $\beta$ that strikes the balance between these two opposing scenarios is $\beta=1$.  

We are now in a position to present our main result: let $x\bn (t)$  be the 
continuous interpolant of the chain $\{x_k\}$, namely
\be\label{interpolant}
x\bn (t)=(Nt-k)x\kpo+(k+1-Nt)x_k, \quad t_k\leq t< t_{k+1},  
\mbox{ where } t_k=k/N.
\ee
The main result of this paper is the diffusion limit for the RWM algorithm started out of stationarity. We informally state such a result below,  
with the functions  $\dl, \Gl$ and $\al$  defined immediately
after the statement.  The rigorous statement of the result, with
precise conditions, appears in  Theorem \ref {thm:weak conv of Skn} and Theorem \ref{thm:mainthm1}.
 Below we denote by $C([0,T]; \tilde{\h})$ the space of $\tilde{\h}$- valued continuous  functions on $[0,T]$, endowed with the uniform topology. 
\smallskip

{\bf Main Result.} {\em  
  Let $\{x_k\}_{k \in \N}$ be the Markov chain constructed using the RWM proposal \eqref{proposal} and starting from the (deterministic) initial datum $x_0\in \tilde{\h}$. 
Assume 
\begin{equation}
\label{eq:sic}
S_0:=\lim_{N \ra \infty}
 \frac{1}{N}\sum_{j=1}^N \frac{\lv x_0^{j,N} \rv^2}{\lambda_j^2} < \infty.
\end{equation}
 Then  the continuous interpolant of the chain $x_k$, i.e. the sequence of processes 
$x\bn (t)$ defined in \eqref{interpolant},  converges weakly in $C([0,T];\tilde{\h})$ (as $N \ra \infty$)  to the  solution of the SDE
\be\label{informalSPDe}
dx(t)=[-x(t)-\C\nabla\Psi(x(t)) ] \dl (S(t)) \, dt+\sqrt{\Gl (S(t))}  \, dW(t)  , \quad x(0)=x_0\, ,
\ee
where $S(t) : \R_+ \ra \R_+:=\{s\in \R: s\geq0\}$ is a deterministic function which solves the ODE
\be\label{ODE}
dS(t)=A_{\ell}(S(t))\, dt, \qquad S(0)=S_0\,, 
\ee
and $W(t)$ is a $\tilde{\h}$-valued $\tilde{\C}$-Brownian motion. \footnote{The operator that here we denote generically by $\tilde{\C}$, to avoid getting in too much notation at this stage,  will be more clearly defined in Section \ref{sec:2} and there denoted by $\C_s$. More precisely, as we will explain, $W(t)$ is a Brownian motion with covariance $\C_s$, see Section  \ref{sec:2}. }
}
\smallskip

If we denote by $\Phi(x)$  the cdf of a standard Gaussian, the functions $\dl, \Gl, \al : \R_+ \ra \R $ that appear in the above statement are defined as follows:  for $x> 0$ and $\ell>0$ a positive parameter, we define
\begin{align} 
\dl(x) & :=2\ell^2 e^{\ell^2 (x-1)} \Phi\left( \frac{\ell (1-2x)}{\sqrt{2x}}\right), \label{defD}\\
\Gl(x) & := \dl(x)+ 2\ell^2 \Phi\left( -\frac{\ell}{\sqrt{2x}} \right),  \label{defGamma}\\
\al(x) & := (1-2x) \dl (x)+ 2\ell^2 \Phi\left( -\frac{\ell}{\sqrt{2x}} \right) = -2x \dl(x)+ \Gl(x) \label{defAl}
\end{align}
 and for $x=0$ and $\ell>0$ we set
\be\label{at0}
\dl (0)=\Gl (0)=\al(0)= 2\ell^2 e^{-\ell^2}.
\ee

\begin{remark}\label{rem:onsindepofx}
\textup{
We make several remarks concerning the main result.}
\begin{itemize}
\item  \textup{ The effective time-step implied by the interpolation \eqref{interpolant}
is $N^{-1}$ so, in this sense,  the main result indicates that, started out of stationarity, the RWM
algorithm will take ${\cal O}(N)$ steps to reach and  explore target measures found by
approximating $\pi$ in $\R^N$. In this respect, we say that the computational cost of the algorithm is of order $N$. To put it differently, our result proves that the proposal variance which
delivers a diffusion limit scales like $N^{-1}$ with dimension and that, 
therefore, the cost of the algorithm is of order $N$.
 \item Notice that equation \eqref{ODE} evolves independently of equation \eqref{informalSPDe}.  
Once the RWM chain $\{x_k\}_k$  is introduced (see \eqref{chaininh} for a precise description of the chain) and an initial state $x_0\in \tilde{\h}$ is given such that $S(0)$ is finite, the  real valued (double) sequence $S_k^N$, }
\be\label{skn}
S_k^N:=\frac{1}{N} \sum_{i=1}^N \frac{\lv x_k^{i,N}\rv^2}{\lambda_i^2}
\ee
\textup{started at 
$S_0^N:=\frac{1}{N} \sum_{i=1}^N \frac{\lv x_0^{i,N}\rv^2}{\lambda_i^2}$ is well defined. We can then consider the continuous interpolant $S\bn (t)$ of the chain $\{ S_k^N\} \subset \R_+$, namely
\be\label{interpolantofsk}
S\bn (t)=(Nt-k)S\kpo^N+(k+1-Nt)S_k^N, \quad t_k\leq t< t_{k+1},  
\mbox{ where } t_k=k/N.
\ee
In Theorem \ref{thm:weak conv of Skn} we  prove that $S\bn(t)$ converges in probability in $C([0,T];\R)$  to the solution of \eqref{ODE} with initial condition 
$S_0:=\lim_{N\ra\infty}S_0^N$. Once such a result is obtained, we can prove that $x\bn(t)$ converges to $x(t)$. We want to stress that the convergence of 
$S\bn(t)$ to $S(t)$ can be obtained independently of the convergence of $x\bn(t)$ to $x(t)$. Moreover, notice that $S_k^N$ is not a Markov Chain in general (unless e.g. $\Psi = 0$.)}
\item  \textup{
Let $S(t):\R_+\rightarrow \R_+$ be the solution of the ODE \eqref{ODE}. We will prove (see Theorem \ref{thm:existenceuniquenessforODE}) that   $S(t) \ra 1$  as $t\ra \infty$. With this in mind,   notice that
$\dl(1)= 2\ell^2 \Phi(-\ell / \sqrt{2})=:h\el$ and $\Gl(1)=2\dl(1)= 2 h\el$.  Heuristically one can then argue that the asymptotic behaviour of the law of $x(t)$, 
solution of \eqref{informalSPDe}, 
is described by the law of the following infinite dimensional SDE:
\be\label{longlimSPDEinf}
dz(t)=-h\el (z+\C \nabla \Psi(z))+ \sqrt{2h\el} dW.
\ee
It was proved in \cite{Hair:etal:05, Hair:Stua:Voss:07}  that \eqref{longlimSPDEinf} is ergodic with unique invariant measure given by  our target measure \eqref{targetmeasure}. 
Our deduction concerning computational cost is made on the assumption
that the law of \eqref{informalSPDe} does indeed tend to the law of
\eqref{longlimSPDEinf}, although we will not prove this here as it would take
us away from the main goal
of the paper which is to establish the diffusion limit of the RWM algorithm. } 
\end{itemize}
\end{remark}
{\hfill$\Box$}

\subsection{Relation to the Literature}\label{subs:rellit}

As already explained, in this paper we consider target measures in 
non-product form, when the chain is started out of stationarity. 
When the target measure is in product form, a diffusion limit for the resulting Markov chain was studied in the seminal paper \cite{MR1428751}, where it is assumed that
\be\label{targetproduct}
p(x^N)=\Pi_{i=1}^N e^{-V(x^{i,N})}, \quad x^N=(x^{1,N} \dd x^{N,N}) \in \R^N,  
\ee
and the potential $V$ is such that the measure $p$ is normalized. 
That work assumed that the chain is started in stationarity, leading to the 
conclusion that, in stationarity, ${\cal O}(N)$ steps are required to explore 
the target distribution. In \cite{MR2137324} the same question was addressed
in the case where $p$ is the density of an isotropic Gaussian,
when the chain is started out of stationarity. Recently the papers
\cite{JLM12MF,JLM12LT} made the significant extension of considering
the product case \eqref{targetproduct} for quite general potentials $V$,
again out of stationarity. The work in \cite{MR2137324,JLM12MF} demonstrates
that the same scaling of the proposal variance is required both in and out
of stationarity, in the product case, and that then ${\cal O}(N)$ steps are required to explore
the target distribution. Again recently, diffusion limits for RWM started in stationarity have 
also been considered  for measures in non-product form \cite{Matt:Pill:Stu:11}, using families
of target measures found 
by approximating \eqref{targetmeasure}, as we consider in this paper; 
once again the conclusion is that ${\cal O}(N)$ steps are required to explore 
the target distribution. In the present paper we combine the settings of
\cite{Matt:Pill:Stu:11} and \cite{JLM12MF} and make a significant extension
of the analysis to consider
measures in non-product form, when the chain is  started out of stationarity,
again showing that ${\cal O}(N)$ steps are required to explore the target 
distribution.

In \cite{MR1428751} the diffusion limit is for a single coordinate of the Markov
chain and takes the form
\be\label{McKean0}
dX(t)=-h_{\ell}V'(X(t)) dt+\sqrt{2h_{\ell}}dB(t),
\ee
with $X_t\in \R$ and $B(t)$ a one dimensional Brownian motion. 
Each coordinate of the Markov chain has the same weak limit.
In \cite{JLM12MF,JLM12LT} a similar limit is obtained for each
coordinate, but because the system is out of stationarity the
coordinates are coupled together, leading to a  
one dimensional nonlinear (in the sense of McKean) diffusion process
\be\label{McKean}
dX(t)=-d_{\ell}(t)V'(X(t)) dt+\sqrt{2g_{\ell}(t)}dB(t),
\ee
with $X_t\in \R$ and $B(t)$ a one dimensional Brownian motion and
$$d_{\ell}(t)=\mathcal{G}\el \left( \EE\left[V'(X(t))\right]^2,\EE\left[V''(X(t))\right]\right), \quad
g_{\ell}(t)=\frac12\tilde{\Gamma}\el \left( \EE\left[ V'(X(t))\right]^2,\EE\left[ V''(X(t))\right] \right).$$
 The  definition of the  functions $\mathcal{G}_{\ell}$ and $\tilde{\Gamma}_{\ell}$  can be found in \cite[(1.7) and (1.6)]{JLM12MF}, respectively. While we don't repeat the full definition here, we point out the two main facts which are relevant in the present context: i) 
in stationarity $d_{\ell}(t)=h_{\ell}$ and $g_{\ell}(t)=h_{\ell}$
and so \eqref{McKean} is identical to \eqref{McKean0}, but out of stationarity
the variation of these quantities reflects what remains
of the coupling between different coordinates in the limit of large $N$;
 ii) regarding the functions $\dl(x)$ and $\Gl(x)$ (defined in \eqref{defD} and \eqref{defGamma}, respectively), notice that $\dl(x)=\mathcal{G}_{\ell \sqrt{2}}(x,1)$,  $\Gl(x)=\tilde{\Gamma}_{\ell \sqrt{2}}(x,1)$.

In \cite{Matt:Pill:Stu:11}, since the target measure is no longer of product form,
the continuous interpolant of the RWM chain $x_k$ defined in \eqref{chaininh} 
has diffusion limit given by the solution of the infinite dimensional SDE \eqref{longlimSPDEinf}, when the chain is started in stationarity. In contrast, in this
paper where we study the same target measure as in \cite{Matt:Pill:Stu:11}, but
started out of stationarity, the limiting diffusion is \eqref{informalSPDe}, with
$S(t)$ solving \eqref{ODE}. 
The relationship between \eqref{McKean0} and \eqref{McKean} is entirely
analogous to the relationship between \eqref{longlimSPDEinf} and \eqref{informalSPDe}.
It is natural to ask, then, why we do not obtain an infinite dimensional
nonlinear (in the McKean sense) diffusion process as the limit in this paper?
The reason for this is related to the fact that our underlying reference
measure is Gaussian. Indeed in the case of Gaussian product measure
the limiting diffusion \eqref{McKean} simplifies in the sense that the
the equations for $d_{\ell}(t)$ and $g_{\ell}(t)$ depend only
on the process $X$ through the quantity $M(t):=\EE(X_t)^2$ and it is 
explicitly noted in   \cite{JLM12MF} that $M(t)$ solves precisely the ODE \eqref{ODE}. 
It is also relevant to observe at this point that the weak limit 
$S\bn \stackrel{d}{\lra}S $ (in $C([0,T], \R_+)$) has already been proven in \cite{MR2137324} in the
Gaussian case where all the components $x\kn$ are identically distributed.

On a technical note, we observe that in \cite{JLM12MF,JLM12LT} the symmetry of the target measure   allows the authors to employ propagation of chaos techniques so that 
these  two papers have brought together two thus far distant worlds: MCMC  and  
probabilistic  methods for nonlinear PDEs.  
In our case,  due to the lack of symmetry in the proposal,  the propagation of chaos point of view cannot be used so we base our analysis on the more ``hands on" approach used in \cite{Matt:Pill:Stu:11}.  As already mentioned, the latter paper is devoted to the study of the diffusion limit for the same chain that we are analysing here and in the same infinite dimensional context as well. 
The difference with our paper is that the chain in  \cite{Matt:Pill:Stu:11} is started in stationarity. As a consequence, the quantity that here we call $S(t)$ is, in their case, equal to 1 for every $t\geq 0$; to better phrase it, if we start the chain in stationarity,  then 
\be\label{skstat}
 S_k^N=\frac{1}{N}\sum_{i=1}^N \frac{\lv x\inn_k\rv^2}{\lambda_i^2}\stackrel{N \ra \infty}{\lra} 1\, ,\quad \mbox{almost surely, for all}\, k \ge 0.
\ee
Recalling that $S(t)\ra 1$ as $t \ra \infty$, this is coherent with our results.
Although the approach we use here is similar to the one developed in \cite{Matt:Pill:Stu:11},
significant extensions of that work are required in order to handle the
technical complications introduced by the non stationarity of the chain.
  Throughout the paper we will flag up the main steps where our analysis differs from 
that in \cite{Matt:Pill:Stu:11} (see in particular Section \ref{subs52}, the comments at the end of Section \ref{sec:heurdiffcoeffforchainx} and  Remark \ref{rem:differ1}). Let us just say for the moment that if we start the chain in stationarity then $x_k^N \sim \pi^N$ for all $k\geq 0$. Because $\pi^N$ is a change of measure from a Gaussian measure, all the almost sure properties of the chain only need to be shown for $x \sim \pi_0$. In the non stationary case we cannot reduce the analysis to the Gaussian case and therefore some of the estimates become  more involved.
The above discussion motivates our interest in the problem studied in this paper: on the one hand we want to extend the analysis of \cite{JLM12MF} away from the non-practical i.i.d. product form for the target; on the other hand we drop the assumption of stationarity in   \cite{Matt:Pill:Stu:11}.

 We mention for completeness  that  the non stationary case  has also  been considered in \cite{Pillai2014,Ottobre2016}, for the pCN (preconditioned Crank-Nicolson) algorithm and for the SOL-HMC (Second Order Langevin - Hamiltonian Monte Carlo) scheme, respectively. These algorithms are well-defined in the infinite dimensional
limit and hence do not require a scaling of the time-step
which is inversely proportional to a power of the dimension. On a related note, we remark that when we want to sample from measures of the form \eqref{targetmeasure}, RWM is not the optimal choice. Indeed both pCN and the SOL-HMC exactly preserve the  Gaussian measure $\pi_0$ and hence,  in the case $\Psi\equiv 0$, such algorithms are exact; it is for this reason that they are well-defined in the infinite 
dimensional limit, and do not require a scaling of the time-step
with dimension. However it is still of interest to study the behaviour of RWM on
measures of the form \eqref{targetmeasure} because they provide an explicit class
of non-product measures for which analysis is possible and for which the scaling of cost with dimention is the same as in the product case, suggesting broader validity of the conclusions in the papers \cite{{MR2137324}, JLM12MF,JLM12LT}. 

\subsection{Outline of Paper}
The paper is organized as follows. 
In the next Section \ref{sec:algor} we present in more detail the RWM algorithm.  In Section \ref{sec:2} we introduce the notation that we will use in the rest of the paper and the  assumptions we make on the nonlinearity $\Psi$ and on the covariance operator $\C$.   Section \ref{sec:sec4} contains the proof of existence and uniqueness for the limiting  equations \eqref{informalSPDe} and \eqref{ODE}. With these preliminaries  in place, we give, in Section \ref{sec:res and heur}, the precise statement of the main results of this paper,  Theorem \ref{thm:weak conv of Skn} and Theorem    \ref{thm:mainthm1}. In Section \ref{sec:res and heur} we also provide  heuristic arguments to explain how the main results are obtained. Such arguments are then made rigorous in Section \ref{sec:proof of wekconvof Skn} and Section  \ref{sec:proofofmainthm}, which contain the proof of  Theorem \ref{thm:weak conv of Skn} and Theorem    \ref{thm:mainthm1}, respectively. The continuous mapping  argument on which these proofs rely is presented in Section \ref{sec:contmapping}.

\section{The Algorithm}\label{sec:algor}
Once the current state $x$ of the chain is given, the proposed move \eqref{proposal} depends only on the noise $\xi^N$. For this reason, in defining the acceptance probability for our algorithm, we can use the notations $\alpha(x^N,y^N)$ or $\alpha(x^N,\xi^N)$ exchangeably. With this in mind, 
 let us define the acceptance probability
\be \label{accprob}
\alpha(x^N,\xi^N):=1 \wedge \exp{(Q(x^N,\xi^N))}
\ee
where
\be\label{defQ}
Q(x^N,\xi^N):= \frac{1}{2} \| \C^{-1/2} x^N\|^2- \frac{1}{2} \| \C^{-1/2} y^N\|^2 + \Psi(x^N)-\Psi(y^N).
\ee
 Consider the Markov chain $\{x_k\}_{k=0}^{\infty} \subset \h$ constructed as follows
\be\label{chaininh}
x_{k+1}=x_k+\gamma_{k+1} \sqrt{\frac{2 \ell^2}  {N}}\C_N^{1/2}\xi_{k+1}^N \, ,  \footnote{Notice that also the state of the chain $\{x_k\}_{k\in\N} \subset \h$ depends on $N$, as only the first $N$ components are updated. However this is not reflected in the notation to avoid confusion between the finite-dimensional chain $\{x_k^N\} \subset X^N$ and the infinite-dimensional chain $\{x_k\} \subset \h$. }
\ee
where
$$
\gamma_{k+1}\dist \textup{Bernoulli}( \alpha_{k+1}) \qquad \mbox{with} \qquad \alpha\kpo=
\alpha(x\kk^N,\xi\kpo^N).
$$
That is, given $\alpha_{k+1}$, the random variable $\gamma_{k+1}$ is independent of any other source of noise and has  Bernoulli law with mean $\alpha (x^N_k , \xi^{N}_{k+1})$.
Therefore, \eqref{chaininh}  can be spelled out as follows: if the chain is currently in $x_k$, the proposal 
$$
y_{k+1}= x\kk +\sqrt{\frac{2 \ell^2}{N}}(\C_N)^{1/2}\xi\kpo^N
$$
is accepted with probability $\alpha_{k+1}$ and rejected with probability $1-\alpha_{k+1}$.  We specify that in the above 
$$
\xi\kpo^N:=\sum_{i=1}^N \xi\inn\kpo \phi_i, \quad \mbox{ where } \quad \xi\inn\kpo\dist \cN(0,1) \mbox{ i.i.d.},
$$
and therefore  for $\alpha_k$, $Q$ and $\gamma_k$ actually depend on $N$ (we suppress the superscript $N$ in the notation   for convenience). 
In a less compact notation,  \eqref{chaininh} and \eqref{defQ}  can be rewritten as 
\begin{align}
x\kpon &=x\kn+ \gamma\kpo \sqrt{\frac{2 \ell^2}{N}} \, \lambda_i  \, \xi_{k+1}\inn, \qquad \mbox{for }i=1 \dd N  
\label{chainxcomponents}\\
x\kpo &=x_k=x_0 \qquad \mbox{on } \h \setminus X^N \nonumber 
 \end{align}
and
\be\label{extdefQ}
Q_k:=Q(x_k^N, \xi\kpo^N)=  \frac{1}{2}\sum_{i=1}^N\frac{ | x\kn |^2}{\lambda_i^2} - 
\frac{1}{2}\sum_{i=1}^N\frac{ | y_{k+1}^{i,N} |^2}{\lambda_i^2}+\Psi(x_k^N)-\Psi(y_{k+1}^N),
\ee
respectively. As we have already observed in the introduction,  in computational practice the above algorithm is implemented in $\R^N$. 
That is, for any $N$ fixed,   in order to sample from the measure $\pi^N$ (defined in \eqref{piN}), one considers the  projected chain $\{x^N_k=\PP^N(x_k)\}_{k \in \N}$.

\section{Preliminaries} 
\label{sec:2}
In this section we detail the notation and the assumptions (Subsection \ref{subsec:notation}
and Subsection  \ref{sec:assumptions} , respectively) that we will use in the rest of the paper.  
\subsection{Notation}\label{subsec:notation}
Let $\left( \h, \langle\cdot, \cdot \rangle, \|\cdot\|\right)$ 
denote an infinite dimensional separable Hilbert space  with
the canonical norm derived from the inner-product. 
Let $\C$ be a  positive, trace class operator on $\h$ 
and $\{\phi_j,\lambda^2_j\}_{j \geq 1}$ be the eigenfunctions
and eigenvalues of $\C$ respectively, so that
 \eqref{cphi} holds.
We assume a normalization under which $\{\phi_j\}_{j \geq 1}$ 
forms a complete orthonormal basis in $\h$.
Throughout the paper we will use the following notation:
\begin{itemize}
\item The letter $N$ denotes exclusively the dimensionality of the space $X^N$ (defined in \eqref{eq:subspace}) where the target measure $\pi^N$ is supported.
\item As already stressed in the introduction, if $x \in \h$, then $x^N:=\PP^N(x)$ is the projection of $x$ on the space $X^N$ defined in  \eqref{eq:subspace}. 
For every $x \in \h$ we have the representation
$x = \sum_{j} \; x^j \phi_j$, where here $x^j=\langle x,\phi_j\rangle$, i.e. $x^j$ is the $j$-th component of $x$. $x^{j,N}$ denotes the $j$-th component of $x^N$, so that $x^j= x^{j,N}$, for $1 \leq j \leq N$. Similar notation holds for the proposal vector $y$ and the noise vector $\xi$ as well. 
\item $x^N_k$ denotes the $k$-th step of projected  chain $\{\PP^N(x_k)\} \subset X^N$, where  $x_k$ has been defined in \eqref{chaininh}. Accordingly, $x_k^{i,N}$ is the $i$-th component of the vector $x_k^N \in X^N$. 
\end{itemize}
Using this notation, we define Sobolev-like spaces $\h^r, r \in \R$, with the inner-products and norms defined by
\begin{equs}
\langle x,y \rangle_r = \sum_{j=1}^\infty j^{2r}x^j y^j
\qquad \text{and} \qquad
\|x\|^2_r = \sum_{j=1}^\infty j^{2r} \, \lv x^j\rv^{2}.
\end{equs}
$(\h^r, \langle \cdot, \cdot \rangle_r)$ is a Hilbert space. 
Notice that $\h^0 = \h$. Furthermore
$\h^r \subset \h \subset \h^{-r}$ for any $r >0$.  
The Hilbert-Schmidt norm $\|\cdot\|_\C$ is defined as
\be\label{norc}
\|x\|^2_\C =\|\C^{-\frac12}x\|^2= \sum_{j=1}^{\infty} \lambda_j^{-2} \lv x^j\rv^2
\ee
and it is the Cameron-Martin norm associated with the Gaussian $\cN(0,\C)$. 
For $r \in \mathbb{R}$, 
let  $L_r : \h \rightarrow \h$ denote the operator which is
diagonal in the basis $\{\phi_j\}_{j \geq 1}$ with diagonal entries
$j^{2r}$, \textit{i.e.},
$$
L_r \,\phi_j = j^{2r} \phi_j, \qquad  j \in \N,
$$
so that $L^{\frac12}_r \,\phi_j = j^r \phi_j$. 
The operator $L_r$ 
lets us alternate between the Hilbert space $\h$ and the interpolation 
spaces $\h^r$ via the identities:
\beq 
\langle x,y \rangle_r = \langle L^{\frac12}_r x,L^{\frac12}_r y \rangle 
\qquad \text{and} \qquad
\|x\|^2_r =\|L^{\frac12}_r x\|^2. 
\eq
Since $\|L_r^{-1/2} \phi_k\|_r = \|\phi_k\|=1$, 
we deduce that $\{\hat{\phi}_k:=L^{-1/2}_r \phi_k \}_{k \geq 1}$ forms an 
orthonormal basis for $\h^r$. If  $y\sim N(0,\C)$,    then $y$ can be  expressed as
\beq[y1]
y=\sum_{j=1}^{\infty}
\lambda_j \rho_j \phi_j  \qquad \mbox{with } \qquad \rho_j\stackrel{\mathcal{D}}{\sim}N(0,1) \,\,\mbox{i.i.d};
\eq
if $\sum_j \lambda_j^2 j^{2r}<\infty$ then $y$ can be equivalently written as
\beq[y2]
y=\sum_{j=1}^{\infty}
(\lambda_j j^r) \rho_j  \hat{\phi}_j  \qquad \mbox{with } \qquad \rho_j\stackrel{\mathcal{D}}{\sim}N(0,1) \,\,\mbox{i.i.d}.
\eq
For a positive, self-adjoint operator $D : \h \mapsto \h$, its trace in $\h$ is defined as
\begin{equs}
\tr_{\h}(D) := \sum_{j=1}^\infty \langle \phi_j, D  \phi_j \rangle.
\end{equs}
We stress that in the above $ \{ \phi_j \}_{j \in \mathbb{N}} $ is an orthonormal basis for $(\h, \langle \cdot, \cdot \rangle)$. Therefore, if 
$\tilde{D}:\h^r \rightarrow \h^r$, its trace in $\h^r$ is
\begin{equs}
\tr_{\h^r}(\tilde{D}) := \sum_{j=1}^\infty \langle L_r^{-\frac{1}{2}} \phi_j, \tilde{D} L_r^{-\frac{1}{2}} \phi_j \rangle_r.
\end{equs}
Since $\tr_{\h^r}(\tilde{D})$ does not depend on the orthonormal basis,
the operator $\tilde{D}$ is said to be trace class in $\h^r$ if $\tr_{\h^r}(\tilde{D}) < \infty$ for
some, and hence any, orthonormal basis of $\h^r$. 

Because $\C$ is defined on $\h$, the covariance operator
$$
\C_r=L_r^{1/2} \C L_r^{1/2}
$$
is defined on $\h^r$. With this definition, 
for all the values of $r$ such that $\tr_{\h^r}(\C_r)=\sum_j \lambda_j^2 j^{2r}< \infty$, we can think of $y$ as a mean zero Gaussian random variable with covariance operator $\C$ in $\h$ and $\C_r$ in $\h^r$ (see \eqref{y1} and \eqref{y2}).  In the same way, 
if $\tr_{\h^r}(\C_r)< \infty $ then 
$$
W(t)= \sum_{j=1}^{\infty} \lambda_j w_j(t) \phi_j= \sum_{j=1}^{\infty}\lambda_j j^r w_j(t) \hat{\phi}_j,
$$
with $\{ w_j(t)\}_{j\in{\N}}$  a collection of  i.i.d. standard Brownian motions on $\mathbb{R}$, 
can be equivalently  understood as an $\h$-valued $\C$-Brownian motion or as an  $\h^r$-valued $\C_r$-Brownian motion.  

\noindent
Throughout we use the following notation.
\begin{itemize}
\item Two sequences $\{\alpha_n\}_{n \geq 0}$ and $\{\beta_n\}_{n \geq 0}$ satisfy $\alpha_n \lesssim \beta_n$ 
if there exists a constant $K>0$ (independent of $n$), such that $\alpha_n \leq K \beta_n$ for all $n \geq 0$.
The notations $\alpha_n \asymp \beta_n$ means that $\alpha_n \lesssim \beta_n$ and $\beta_n \lesssim \alpha_n$.

\item 
Two sequences of real functions $\{f_n\}_{n \geq 0}$ and $\{g_n\}_{n \geq 0}$ defined on the same set $\Omega$
satisfy $f_n \lesssim g_n$ if there exists a constant $K>0$ (independent of $n$) satisfying $f_n(\omega) \leq K g_n(\omega)$ for all $n \geq 0$
and all $\omega \in \Omega$.
The notations $f_n \asymp g_n$ means that $f_n \lesssim g_n$ and $g_n 
\lesssim f_n$. Similarly, for  two functions $f(x)$ and $g(x)$, we write $f(x)\les g(x)$ if there exists a constant $K>0$ (independent of $x$) such that $f(x)\leq K g(x)$ for all $x$ where the two functions are defined. 

\item
The notation $ \EE_x \left[ f(x,\xi) \right]$ denotes expectation 
with variable $x$ fixed, while the randomness present in $\xi$
is averaged out.
\end{itemize}
As customary,  $\R_+:=\{s\in \R : s \geq 0\}$ and for all $b \in \R_+$ we let $[b]=n$ if $n\leq b < n+1$ for some integer $n$. Finally,  for time dependent functions we will  use both the notations $S(t)$ and $S_t$ interchangeably.  

\subsection{Assumptions} \label{sec:assumptions}
In this section we describe the assumptions on the covariance 
operator $\C$ of the Gaussian measure $\pi_0 \dist \cN(0,\C)$ and the functional $\Psi$. We fix a distinguished exponent 
$s\geq 0$ and assume that $\Psi: \mathcal{H}^s\rightarrow \R$ and $\tr_{\mathcal{H}^s}(\mathcal{C}_s)<\infty$. In other words the space $\h^s$ is the one that we were denoting by $\tilde{\h}$ in the introduction. 
For each $x \in \h^s$ the derivative $\nabla \Psi(x)$
is an element of the dual $(\h^s)^*$ of $\h^s$, comprising 
the linear functionals on $\h^s$.
However, we may identify $(\h^s)^* = \h^{-s}$ and view $\nabla \Psi(x)$
as an element of $\h^{-s}$ for each $x \in \h^s$. With this identification,
the following identity holds
\begin{equs}
\| \nabla \Psi(x)\|_{\mathcal{L}(\h^s,\R)} = \| \nabla \Psi(x) \|_{-s}; 
\end{equs}
furthermore, the second derivative $\partial^2 \Psi(x)$ 
can be identified with an element 
of $\mathcal{L}(\h^s, \h^{-s})$.
To avoid technicalities we assume that $\Psi(x)$ is quadratically bounded, 
with first derivative linearly bounded at infinity and second derivative globally 
bounded. 
\begin{assumptions} \label{ass:1}
The functional $\Psi$ and covariance operator $\C$ satisfy the following assumptions.
\begin{enumerate}
\item {\bf Decay of Eigenvalues $\lambda_j^2$ of $\C$:}
there exists a constant $\kappa > \frac{1}{2}$ such that
\begin{equs}
\lambda_j \asymp j^{-\kappa}.
\end{equs}

\item {\bf Domain of $\Psi$:}
there exists an exponent $s \in [0, \kappa - 1/2)$ such 
that $\Psi$ is defined everywhere on $\h^s$.

\item {\bf Size of $\Psi$:} 
the functional $\Psi:\h^s \to \R$ satisfies the growth conditions
\begin{equs}
0 \quad\leq\quad \Psi(x) \quad\lesssim 1+  \|x\|_s^2
\end{equs}

\item {\bf Derivatives of $\Psi$:} 
The derivatives of $\Psi$ satisfy
\begin{equs}\label{der-s}
\| \nabla \Psi(x)\|_{-s} \quad\lesssim  \|x\|_s^{\varsigma} \vee  \|x\|_s  \qquad \text{and} \qquad
\|{\partial^2 \Psi(x)}\|_{\cl(\h^s;\h^{-s})} \quad\lesssim\quad 1,
\end{equs}
for some $ 1/2 \leq \varsigma<1$. 

\end{enumerate}
\end{assumptions}

\begin{remark} \label{rem:one}\textup{
Regarding the first of Assumptions \ref{ass:1}, the condition $\kappa > \frac{1}{2}$ ensures that $\tr_{\h^s}(\C_s)  < \infty$
for any $0 \leq s  < \kappa - \frac{1}{2}$; this implies that
$\pi_0(\h^s)=1$ 
for any  $ 0 \leq s < \kappa - \frac{1}{2}$.  As for the first of the requirements in \eqref{der-s}, this is slightly less general than the corresponding condition imposed in \cite{Matt:Pill:Stu:11} (there it is required that $\| \nabla \Psi(x)\|_{-s} \quad\lesssim  1 + \|x\|_s$). This is to avoid excessive technicalities (particularly  in the proof of \eqref{lembound26}, which is the only place where this simplification is actually used, see Remark \ref{rem810} and Remark \ref{remvalue} on this point). }
 {\hfill$\Box$}
\end{remark}

\begin{Example}\textup{
The functional $\Psi(x)  = \frac{1}{2}\|x\|_s^2$ is defined on $\h^s$ and its derivative at $x \in \h^s$
is given by $\nabla \Psi(x) = \sum_{j \geq 0} j^{2s} x^j \phi_j \in \h^{-s}$ with  
$\|\nabla \Psi(x)\|_{-s} = \|x\|_s$. The second derivative $\partial^2 \Psi(x) \in \mathcal{L}(\h^s, \h^{-s})$ 
is the linear operator that maps $u \in \h^s$ to $\sum_{j \geq 0} j^{2s} \langle u,\phi_j \rangle \phi_j \in \h^{-s}$:
its norm satisfies $\| \partial^2 \Psi(x) \|_{\mathcal{L}(\h^s, \h^{-s})} = 1$ for any $x \in \h^s$. }  {\hfill$\Box$}
\end{Example}

\noindent
The Assumptions \ref{ass:1} ensure that the functional $\Psi$ behaves 
well in a sense made precise in the following lemma.  We set 
\be\label{defFz}
F(z)=-z-\C \nabla \Psi(z). 
\ee

\begin{lemma} \label{lem:lipschitz+taylor}
Let Assumptions  \ref{ass:1} hold.
\begin{enumerate}
\item  The function $\C \nabla \Psi(z)$ is globally Lipshitz on $\hs$ and hence the same holds for
the function  $F(z)$:
\begin{equs} 
\|F(x) - F(y)\|_s \;\lesssim \; \|x-y\|_s
\qquad \qquad \forall x,y \in \h^s.  
\end{equs}
\item 
The second order remainder term in the Taylor expansion of $\Psi$ satisfies
\begin{equs} \label{e.taylor.order2}
\big| \Psi(y)-\Psi(x) - \langle\nabla \Psi(x), y-x \rangle \big| \lesssim \|y-x\|_s^2
\qquad \qquad \forall x,y \in \h^s.
\end{equs}
\end{enumerate}
\end{lemma}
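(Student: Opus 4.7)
The plan is to derive both statements from Assumption \ref{ass:1}(4), which controls the first two derivatives of $\Psi$, together with the eigenvalue decay in Assumption \ref{ass:1}(1), which lets $\C$ absorb two powers of index and thereby map $\h^{-s}$ continuously into $\h^s$.

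For part (1), I would first establish the auxiliary mapping property that $\C:\h^{-s}\to\h^s$ is bounded. Writing $v=\sum_j v_j\phi_j\in\h^{-s}$ one has $\C v=\sum_j \lambda_j^2 v_j\phi_j$, so
\begin{equs}
\|\C v\|_s^2 \;=\; \sum_{j\geq 1} j^{2s}\lambda_j^4 v_j^2 \;=\; \sum_{j\geq 1}\bigl(j^{4s}\lambda_j^4\bigr)\cdot\bigl(j^{-2s}v_j^2\bigr) \;\lesssim\; \sup_{j\geq 1}\bigl(j^{4s}\lambda_j^4\bigr)\,\|v\|_{-s}^2,
\end{equs}
and the supremum is finite because $\lambda_j\asymp j^{-\kappa}$ with $s<\kappa-\tfrac12<\kappa$. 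Next, for any $x,y\in\h^s$, the fundamental theorem of calculus in the duality sense gives
\begin{equs}
\nabla\Psi(x)-\nabla\Psi(y)\;=\;\int_0^1 \partial^2\Psi\bigl(y+t(x-y)\bigr)(x-y)\,dt,
\end{equs}
so by Assumption \ref{ass:1}(4),
\begin{equs}
\|\nabla\Psi(x)-\nabla\Psi(y)\|_{-s}\;\leq\;\sup_{z\in\h^s}\|\partial^2\Psi(z)\|_{\mathcal{L}(\h^s,\h^{-s})}\,\|x-y\|_s\;\lesssim\;\|x-y\|_s.
\end{equs}
Combining these with the boundedness of $\C:\h^{-s}\to\h^s$ gives $\|\C\nabla\Psi(x)-\C\nabla\Psi(y)\|_s\lesssim\|x-y\|_s$, and since the identity on $\h^s$ is trivially Lipschitz, the same bound transfers to $F(z)=-z-\C\nabla\Psi(z)$.

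For part (2), I would use the second-order Taylor expansion with integral remainder: interpreting the pairings below as the $\h^{-s}$–$\h^s$ duality,
\begin{equs}
\Psi(y)-\Psi(x)-\langle\nabla\Psi(x),y-x\rangle \;=\;\int_0^1(1-t)\,\bigl\langle \partial^2\Psi\bigl(x+t(y-x)\bigr)(y-x),\,y-x\bigr\rangle\,dt.
\end{equs}
Estimating the integrand via the duality bound and Assumption \ref{ass:1}(4),
\begin{equs}
\bigl|\bigl\langle\partial^2\Psi(z)(y-x),y-x\bigr\rangle\bigr|\;\leq\;\|\partial^2\Psi(z)\|_{\mathcal{L}(\h^s,\h^{-s})}\,\|y-x\|_s^2\;\lesssim\;\|y-x\|_s^2,
\end{equs}
uniformly in $z\in\h^s$, and integrating over $t\in[0,1]$ yields the claimed bound.

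I do not expect real obstacles here; the only subtle point is ensuring the range/regularity bookkeeping for $\C$ — specifically that the eigenvalue decay rate $\kappa>\tfrac12$ together with $s<\kappa-\tfrac12$ is what makes $\C$ send the dual space $\h^{-s}$ back into $\h^s$, which is precisely what is needed to promote the $\h^{-s}$-Lipschitz estimate on $\nabla\Psi$ to an $\h^s$-Lipschitz estimate on $\C\nabla\Psi$. Once that mapping property is in hand, both statements follow from the standard mean-value/Taylor identities together with the derivative bounds in Assumption \ref{ass:1}(4).
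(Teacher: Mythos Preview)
Your proposal is correct and is precisely the standard argument one would expect: the key step is the mapping property $\C:\h^{-s}\to\h^s$ coming from $\lambda_j\asymp j^{-\kappa}$ with $s<\kappa$, after which both claims follow from the mean-value and Taylor identities combined with the uniform bound on $\partial^2\Psi$ in Assumption~\ref{ass:1}(4). The paper itself does not spell this out but simply refers to \cite{Matt:Pill:Stu:11,PST11}, where exactly this argument is carried out; your write-up is essentially what appears there.
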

\begin{proof}
See \cite{Matt:Pill:Stu:11}.
\end{proof}
We would also like to recall  that because of our assumptions on the covariance operator, for all $p\geq 0$ there is a constant $c=c(p)$ such that
\be\label{c1/2xi}
\EE\nors{(\C_N)^{1/2} \xi^N}^p \leq c, \quad \mbox{uniformly in $N$, }
\ee
if $\xi^N$ is the Gaussian defined in \eqref{proposal}. We will prove this inequality in Appendix A. For the moment we just stress that $c>0$ is  a constant independent of $N$ but that does depend on $p$.

\section{Existence And Uniqueness For the Limiting SDE} \label{sec:sec4}
The main statements of this section are Theorem \ref{thm:existenceuniquenessforODE}, Theorem \ref{Thm:SPDe} and Theorem \ref{contofupsilon}. In Theorem \ref{thm:existenceuniquenessforODE} and Theorem \ref{Thm:SPDe}   we prove existence and uniqueness for  the solution to equation \eqref{ODE} and equation \eqref{informalSPDe}, respectively. Theorem \ref{contofupsilon} is a ``continuous mapping" result and it is crucial for  the arguments of Section \ref{sec:contmapping} (and, ultimately, it is the backbone of the proof of our main results).

\begin{theorem}\label{thm:existenceuniquenessforODE}
For any initial datum $S(0) \in \R_+$,  there exists a unique  solution  $S(t)$ to the ODE \eqref{ODE}. Such a solution  is strictly positive for every $t>0$. Furthermore, $S(t)$ is  bounded with continuous first derivative for all $t\geq 0$.   In particular                                   
\be\label{Stto1}
\lim_{t\ra \infty} S(t) =1 \,
\ee
and 
\be\label{solofODEisbdd}
0\leq \min\{S(0),1\}\leq S(t) \leq \max\{S(0), 1\} \, , \qquad \mbox{for } t \geq 0. 
\ee
\end{theorem}
Before proving the above theorem we state Lemma \ref{lem:propofDandGamma}, which gathers all the properties of the  real valued functions $\dl, \Gl$ and $\al$, defined in 
\eqref{defD}-\eqref{at0}.

\begin{lemma}\label{lem:propofDandGamma}
The functions $\dl(x)$,  $\Gl(x)$ and  $\sqrt{\Gl (x)}$ are positive, globally Lipshitz continuous and bounded, with bounded first derivative.    $\al(x)$  is bounded above but not below; it  has continuous first derivative on the whole  of $\R_+$ and it is globally Lipshitz.
Moreover, for any $\ell>0$,  $\al(x)$ is strictly positive for $x\in[0,1)$, strictly negative for $x>1$ and $\al(1)=0$. 
\end{lemma}
\begin{proof}[Proof of lemma \ref{lem:propofDandGamma}]
The proof of the above Lemma \ref{lem:propofDandGamma} follows from the same arguments used in \cite[Proof of Lemma 2]{JLM12MF}. We sketch the proof in Appendix A for completeness. A plot of the function $\al(x)$ for various values of $\ell$ can be found in \cite[page 258]{MR2137324}. Figure \ref{fig:al} contains a plot of $\al(x)$ for $\ell=1$ and $\ell=2$. Plots of the functions $\dl, \Gl$ and of the derivative of $\al$ can be found in Appendix A. 
\end{proof}

\begin{figure}[ht]
\centering
\includegraphics[height=7cm]{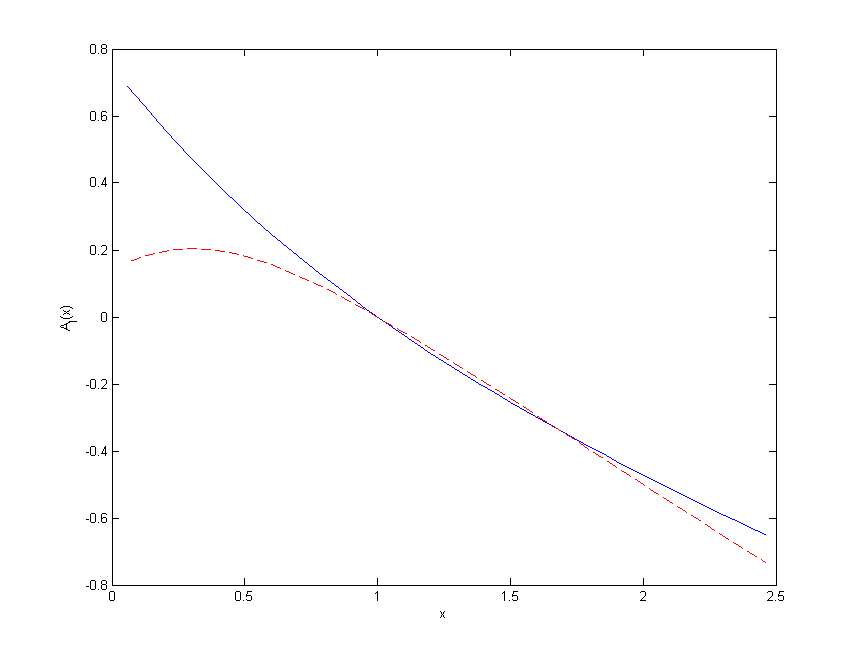}
\caption{Plots of the function $\al(x)$ for $\ell=1$ and $\ell=2$ (dashed line).}
\label{fig:al}
\end{figure}

\begin{proof}[Proof of Theorem \ref{thm:existenceuniquenessforODE}]
Existence and uniqueness  for \eqref{ODE} is standard, since $\al$ is globally Lipshitz. The limit \eqref{Stto1} and the bound \eqref{solofODEisbdd} are a consequence of the last statement of Lemma \ref{lem:propofDandGamma}.  Indeed, if we start with an initial datum $S_0 \in [0,1)$ then $S(t)$ will increase towards 1. If $S(0)>1$ then $S(t)$ will decrease towards 1. \\
\end{proof}
We now come to existence and uniqueness for equation 
\eqref{informalSPDe}, which we rewrite  as 
$$
dx(t)=F(x(t))\dl(S(t)) dt+\sqrt{\Gl(S(t))} dW(t),
$$
where $W(t)$ is an $\hs$ valued 
$\C_s$-Brownian Motion and the function $F$ has been defined in  \eqref{defFz}.  The above  is a short notation for the integral form
\be\label{xsolofSPDE}
x(t)=x(0)+\int_0^t F(x(v))\dl(S(v)) dv+ \int_0^t \sqrt{\Gl(S(v))} dW(v) 
\, .
\ee
In view of the next statement we emphasize that throughout the paper the spaces $C([0,T];\hs)$ and $C([0,T];\R_+)$ are  endowed with the uniform topology. 
\begin{theorem}\label{Thm:SPDe}
Let Assumption \ref{ass:1} hold. Then,  for any initial condition
$  x(0)  \in \hs$, any $T>0$ and  every
$\h^s$-valued ${{\C}}_s$-Brownian motion $W(t)$, there exists a
unique solution of  equation \eqref{informalSPDe}  (with $S(t)$ given by \eqref{ODE}) in the space
$C([0,T]; \h^s)$.
\end{theorem}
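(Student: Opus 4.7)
The plan is to reduce the SDE to a pathwise ODE in $\h^s$ via the additive structure of the noise, and then apply the Banach fixed-point theorem using the global Lipschitz property of $F$ proved in Lemma \ref{lem:lipschitz+taylor}. The key point that makes this possible is that $S(t)$, being the solution of the ODE \eqref{ODE}, is a \emph{deterministic} continuous function, so the diffusion coefficient $\Gl^{1/2}(S(t))$ is a deterministic, bounded, continuous scalar function of time (boundedness of $\Gl$ follows from Lemma \ref{lem:propofDandGamma}). As a first step I would therefore fix $\omega\in\Omega$ and define the stochastic convolution
\begin{equation*}
M(t) := \int_0^t \Gl^{1/2}(S(v))\, dW(v),
\end{equation*}
which is a Wiener integral of a bounded scalar deterministic integrand against an $\h^s$-valued $\C_s$-Brownian motion. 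Expanding $W(t)=\sum_j \lambda_j j^s w_j(t)\hat\phi_j$ in the basis $\{\hat\phi_j\}$ of $\h^s$ (cf.\ \eqref{y2}), one checks that $\EE\|M(t)\|_s^2 \le \|\Gl\|_\infty\, t\, \tr_{\h^s}(\C_s) < \infty$ by Assumption \ref{ass:1}(1), and continuity of $t\mapsto M(t)$ in $\h^s$ follows from standard Kolmogorov-type arguments applied componentwise combined with uniform-in-$j$ tail estimates.

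Next, I would set $y(t) := x(t) - M(t)$, so that \eqref{xsolofSPDE} becomes the pathwise integral equation
\begin{equation*}
y(t) = x(0) + \int_0^t F\bigl(y(v)+M(v)\bigr)\,\dl(S(v))\,dv,
\end{equation*}
to be solved in $C([0,T];\h^s)$ for each fixed realization of $M$. Because $F$ is globally Lipschitz on $\h^s$ (Lemma \ref{lem:lipschitz+taylor}) and $\dl\circ S$ is bounded by some constant $C_T$ on $[0,T]$, the map
\begin{equation*}
(\mathcal{T}y)(t) := x(0) + \int_0^t F\bigl(y(v)+M(v)\bigr)\,\dl(S(v))\,dv
\end{equation*}
satisfies $\|\mathcal{T}y_1(t)-\mathcal{T}y_2(t)\|_s \le C_T\,L_F\int_0^t \|y_1(v)-y_2(v)\|_s\,dv$, and a standard iteration of this estimate shows that a suitable power of $\mathcal{T}$ is a strict contraction on $C([0,T];\h^s)$. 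The Banach fixed point theorem then delivers a unique $y\in C([0,T];\h^s)$, and $x:=y+M$ is the desired unique mild solution. Uniqueness in the original formulation follows by applying Gronwall's inequality to $\|x_1(t)-x_2(t)\|_s$ using the same Lipschitz bounds.

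The only nonroutine point is verifying that $M\in C([0,T];\h^s)$ almost surely with the right integrability. Since $\Gl^{1/2}(S(\cdot))$ is scalar and deterministic, however, this reduces to showing that a time-changed $\C_s$-Wiener process has continuous paths in $\h^s$, which is classical once $\tr_{\h^s}(\C_s)<\infty$ (granted by Remark \ref{rem:one}). Apart from this, every ingredient, namely Lipschitz continuity of $F$, boundedness of $\dl$, $\Gl$, and continuity of $S$, has already been established, so no substantial new obstacle is expected.
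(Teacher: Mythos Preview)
Your argument is correct: subtracting the stochastic convolution $M(t)=\int_0^t\Gl^{1/2}(S(v))\,dW(v)$ reduces the problem to a pathwise ODE with globally Lipschitz right-hand side, and Banach's fixed-point theorem yields existence and uniqueness on all of $[0,T]$. All the ingredients you invoke (global Lipschitz property of $F$, boundedness of $\dl$ and $\Gl$, $\tr_{\h^s}(\C_s)<\infty$) are indeed established in the paper.

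The paper, however, takes a somewhat different route. Rather than subtracting the noise, it proves directly that the solution map $\mathcal{J}:(x(0),W)\mapsto x$ is Lipschitz continuous from $\hs\times C([0,T];\hs)$ to $C([0,T];\hs)$. To compare two solutions driven by different Brownian paths $W^\sharp$ and $W^\dag$, the paper integrates the stochastic integral by parts, turning $\int_0^t\Gl^{1/2}(S_v)\,d(W^\sharp_v-W^\dag_v)$ into a boundary term plus an integral involving $\frac{d}{dv}\Gl^{1/2}(S_v)$, and then bounds the latter using the smoothness of $\Gl$ and the boundedness of $S(\cdot)$ from Theorem~\ref{thm:existenceuniquenessforODE}. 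Gronwall then gives both contractivity and global extension.

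What each approach buys: your ``subtract the noise'' reduction is cleaner and more self-contained for the bare existence and uniqueness statement, and avoids having to differentiate $\Gl^{1/2}(S(\cdot))$. The paper's approach, on the other hand, establishes continuity of the solution map in the \emph{driving path} $W$, not just in the initial condition; this is precisely the content of the map $\mathcal{J}_1$ in Theorem~\ref{contofupsilon}, which is the backbone of the continuous mapping argument used to prove the main diffusion limit. So while your proof is fine for Theorem~\ref{Thm:SPDe} in isolation, the paper's detour through path-continuity is doing double duty.
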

Before proving the above theorem, let us make a remark on the statement.
\begin{remark}\textup{
In the statement of Theorem \ref{Thm:SPDe} we refer to equation \eqref{informalSPDe} or, equivalently, to equation \eqref{xsolofSPDE}. With the notation introduced so far, the function $S(t)$ appearing in \eqref{xsolofSPDE} is the solution of the ODE \eqref{ODE}. However the proof of Theorem \ref{Thm:SPDe} is still valid if  $S(t): \R_+ \rightarrow  \R_+$ is any continuous and  bounded function with continuous first derivative.
}{\hfill$\Box$}
\end{remark}

\begin{proof}[Proof of Theorem \ref{Thm:SPDe}] 
Once we prove continuity of the map 
\begin{align*}
\mathcal{J}\,:\,\hs \times C([0,T]; \hs)  & \lra C([0,T];\hs)\\
(x(0), W(t))               & \lra   x(t) 
\end{align*}              
where $x(t)$ is defined by \eqref{xsolofSPDE}, 
 existence and uniqueness for equation \eqref{xsolofSPDE}  for a small enough time interval follow from a standard contraction mapping argument, see e.g. \cite{Matt:Pill:Stu:11}.  To show the continuity of such a map, let $x^{\sharp}(t)$ and $x^{\dag}(t)$ be the images through the map $\mathcal{J}$ of the pairs 
$(x^{\sharp}(0),\Ws(t))$ and $(\xdag(0), \Wdag(t))$,  respectively. For all $0 \leq t \leq T$,   we then have
\begin{align*}
\nors{\xs (t)-\xdag (t)} &\leq \int_0^t \nors{F(\xs_v)\dl(S_v)-F(\xdag_v) \dl(S_v)} dv  \\
&+\nors{\xs (0)-\xdag (0)}+
\Big\| \int_0^t \sqrt{\Gl(S_v)} (d\Ws_v - d\Wdag_v ) \Big\|_s.
\end{align*}
Thanks to the Lipshitzianity of $F$,  Lemma \ref{lem:lipschitz+taylor},   and the boundedness of $\dl$, Lemma  \ref{lem:propofDandGamma}, 
  the  drift coefficient of \eqref{xsolofSPDE}, i.e.
  $$
\Theta(x, S):= F(x)\dl(S), \quad (x, S) \in \hs \times \R_+,
$$
   is globally Lipshitz, uniformly in time. Therefore, integrating by parts in the stochastic integral,   we get 
\begin{align*}
\nors{\xs (t)-\xdag (t)} &\les \int_0^t \nors{\xs (v)-\xdag (v)} 
+ \nors{\xs (0)-\xdag (0)}\\
&+ \Bigl\|\Gl^{1/2}(S_t) (\Ws_t- \Wdag_t) 
- \int_0^t \frac{d}{dv}\left(\Gl^{1/2}(S_v) \right) (\Ws_v - \Wdag _v)\Bigr\|_s.
\end{align*}
We now further work on the right hand side of the above as follows:
\begin{align*}
\Bigl\|\int_0^t \frac{d}{dv}\left(\Gl^{1/2}(S_v) \right) (\Ws_v - \Wdag _v)\Bigr\|_s & \leq
\sup_{v \in [0,t]}\nors{\Ws_v - \Wdag _v}  \int_0^t \lv \frac{d}{dv}\left(\Gl^{1/2}(S_v) \right)\rv.
\end{align*}
Clearly,
$$
\lv \frac{d}{dv}\left(\Gl^{1/2}(S_v) \right) \rv= \frac{1}{2}\lv \left.\frac{\frac{d}{dx}\Gl(x)}{\Gl^{1/2}(x)} \right\vert_{x=S_v} (\al (S_v))  \rv.
$$
From the definition of $\Gl$ (see \eqref{defGamma} and \eqref{at0}), for any $x\geq 0$,   $\Gl$ is bounded below away from zero. Moreover, $\Gl$ has bounded derivative (see Lemma \ref{lem:propofDandGamma}) and $\al$, being continuous, is bounded on compacts. These facts, together with \eqref{solofODEisbdd},  imply the bound 
 \be\label{ulip2}\int_0^t \lv \frac{d}{dv}\left(\Gl^{1/2}(S_v) \right)\rv \les t, 
\ee
hence  
\be\label{ulip}
\nors{\xs (t)-\xdag (t)} \les \int_0^T \sup_{v \in [0,t]}\nors{\xs (v)-\xdag (v)} 
+ \nors{\xs (0)-\xdag (0)}+ T\sup_{t \in [0,T]} \nors{\Ws_t - \Wdag_t}.
\ee
Taking the supremum over $t \in [0,T]$ on the left hand side of the above gives the desired contractivity, thanks to the Gronwall Lemma, for a small enough time interval, say $[0,T_0]$ and hence a unique solution can be constructed for $t \in [0,T_0]$. Such a solution can then be extended  to  $t \geq 0$, thanks to the specific form of  \eqref{ulip}, which, we stress again, is a consequence of \eqref{solofODEisbdd}. Indeed, thanks to the fact that the drift of the equation is Lipshitz uniformly in time and to \eqref{ulip2}, the time dependence of the RHS of \eqref{ulip} will stay the same when we try and construct a solution starting from $T_0$. We will therefore be able to construct a solution over the interval $[T_0, 2T_0]$. Continuing inductively we can cover the whole real axis. This concludes the proof. 
\end{proof}
Consider now the following    equation
\be\label{SPDe1}
d{z} (t)=[-z(t)-\C\nabla\Psi(z(t))]  \dl (S(t)) \, dt + d\eta(t),
\ee
where $S(t)$ is the solution of \eqref{ODE} and  $\eta(t)$ is any time-continuous function taking values in $\hs$. Also, let 
$\mathfrak{S}(t):\R_+ \ra \R$ be the solution of 
\be\label{SPDe2}
d\mathfrak{S}(t)=A_{\ell}(\mathfrak{S}(t)) \, dt+ a\, dw(t),
\ee
where  $w(t)$ is a real valued standard Brownian motion and $a\in \R_+$ is a constant.
 \begin{remarks}\label{rems:deceqn}\textup{ Before stating the next theorem we need to be more precise  about equations \eqref{SPDe1} and \eqref{SPDe2}. 
\begin{itemize}
\item We consider equation \eqref{SPDe2}, which is \eqref{ODE} perturbed by noise, in view of the contraction mapping argument  (explained in Section \ref{sec:contmapping}) that we will use to prove our main results.  Observe that  \eqref{SPDe2} still admits a unique solution (by the Lipshitzianity of $\al$).  Analogous observations hold for equation \eqref{SPDe1}, which has the same structure as equation   \eqref{informalSPDe}.  
 \item The solution to \eqref{SPDe2} might not stay positive if started from a positive initial datum (as opposed to the solution to \eqref{ODE}, which preserves positivity). However $\al$ is only defined for positive arguments,   (see \eqref{defAl}). To make sense of the notation  in \eqref{SPDe2}, we extend  $\al$ to the negative semiaxis. In other words, the function $\al$ appearing in \eqref{SPDe2} is not the same $\al$ defined in \eqref{defAl}; we should use a different notation  for such a function but we refrain from doing so for simplicity. In conclusion, the function $\al(s)$ in \eqref{SPDe2} is intended to be a strictly positive function for any $\R \ni  s< 1$ and we fix it equal to 
1 if  $s\leq -1/2$; it smoothly interpolates between -1/2  and  $\al(0)$ 
    if  $-1/2 <s<0$ and it coincides with $\al(s)$ as defined in \eqref{defAl}
    if $s\geq 0$. Therefore such an $\al$ will still be globally Lipshitz.
\item We emphasize that \eqref{SPDe1} and  \eqref{SPDe2} are decoupled as the function $S(t)$ appearing in 
\eqref{SPDe1} is the solution of \eqref{ODE}. This fact will be  particularly  relevant in the remainder of this section as well as in  Section \ref{subs:cma1} and Section \ref{subs:cma2}.  
\end{itemize}
 } {\hfill$\Box$}
\end{remarks}
The  statement of the following theorem is  crucial to the proof of the main results of this paper, Theorem \ref{thm:weak conv of Skn} and Theorem \ref{thm:mainthm1}, stated in the next section.
\begin{theorem}\label{contofupsilon} With the notation introduced so far (and in particular with the clarifications of Remarks \ref{rems:deceqn}) 
let $z(t)$ and $\mathfrak{S}(t)$ be  solutions of  \eqref{SPDe1} and \eqref{SPDe2},  respectively.
  Then under Assumption \ref{ass:1} the maps
\begin{align*}
\mathcal{J}_1: \hs \times  C([0,T]; \hs)  & \lra C([0,T];\hs \times \R) \\
 (z_0, \eta(t))  & \lra z(t)
\end{align*}
and 
\begin{align*}
\mathcal{J}_2: \R_+ \times  C([0,T]; \R)  & \lra C([0,T]; \R) \\
 (\mathfrak{S}_0, w(t))  & \lra \mathfrak{S}(t)
\end{align*}
are  continuous maps. 
\end{theorem}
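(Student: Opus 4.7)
The plan is to mirror, for each map separately, the contraction/Gronwall template already used in the proof of Theorem \ref{Thm:SPDe}, exploiting the crucial observation of Remarks \ref{rems:deceqn} that equations \eqref{SPDe1} and \eqref{SPDe2} are decoupled, so the maps $\mathcal{J}_1$ and $\mathcal{J}_2$ can be treated independently.

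For $\mathcal{J}_1$, let $(x^{\sharp}_0,\eta^{\sharp})$ and $(x^{\dag}_0,\eta^{\dag})$ be two inputs, producing solutions $x^{\sharp},x^{\dag}$ of \eqref{SPDe1} with the \emph{same} function $S(t)$ (uniquely prescribed by \eqref{ODE} and its fixed initial value $S_0$). Subtracting the two integral forms one has
$$
x^{\sharp}(t)-x^{\dag}(t)=(x^{\sharp}_0-x^{\dag}_0)+\int_0^t\dl(S_v)\bigl[F(x^{\sharp}_v)-F(x^{\dag}_v)\bigr]dv+\bigl[\eta^{\sharp}(t)-\eta^{\dag}(t)\bigr]-\bigl[\eta^{\sharp}(0)-\eta^{\dag}(0)\bigr].
$$
By Lemma \ref{lem:lipschitz+taylor}, $F$ is globally Lipschitz on $\hs$, and by Lemma \ref{lem:propofDandGamma}, $\dl$ is bounded; hence the drift $\Theta(x,S)=F(x)\dl(S)$ is globally Lipschitz in $x$, uniformly in $t\in[0,T]$. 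Taking $\nors{\cdot}$ and applying Gronwall yields
$$
\sup_{t\in[0,T]}\nors{x^{\sharp}(t)-x^{\dag}(t)}\les\nors{x^{\sharp}_0-x^{\dag}_0}+\sup_{t\in[0,T]}\nors{\eta^{\sharp}(t)-\eta^{\dag}(t)},
$$
which establishes the (in fact Lipschitz) continuity of $\mathcal{J}_1$. Note that here no integration by parts is required: the noise enters \eqref{SPDe1} linearly through $\eta$, which is merely a continuous $\hs$-valued process, not a stochastic integral.

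For $\mathcal{J}_2$ the analogous estimate is complicated by the fact that $\al$ is only locally Lipschitz, in the sense \eqref{alalmostlip}, so the plan has to be executed in two steps: first an a priori bound on $\mathfrak{S}$ uniform on bounded input sets, then a Gronwall argument on the resulting compact range. Write $\mathfrak{S}(t)=\mathfrak{S}_0+aw(t)+\int_0^t\al(\mathfrak{S}_v)dv$. Since the extension of $\al$ described in Remarks \ref{rems:deceqn} is bounded above on all of $\R$ (by Lemma \ref{lem:propofDandGamma}), one obtains an upper bound $\mathfrak{S}(t)\leq M_+$ depending only on $|\mathfrak{S}_0|$, $\sup_{[0,T]}|w|$ and $T$. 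On the half-line $(-\infty,M_+]$ the extended $\al$ is continuous, equals $1$ on $(-\infty,-1/2]$, and is bounded on the compact complement, so it is also bounded below there; this yields a matching lower bound $\mathfrak{S}(t)\geq M_-$ of the same type. Hence $|\mathfrak{S}(t)|\leq M$ on $[0,T]$, with $M$ uniform over bounded subsets of $\R_+\times C([0,T];\R)$.

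With $M$ fixed, any two solutions $\mathfrak{S}^{\sharp},\mathfrak{S}^{\dag}$ coming from inputs in the bounded set remain in $[-M,M]$, and on this compact interval the local Lipschitz bound \eqref{alalmostlip} becomes a genuine Lipschitz bound with constant $L_M\les 1+M$. Subtracting the two integral equations,
$$
\mathfrak{S}^{\sharp}(t)-\mathfrak{S}^{\dag}(t)=(\mathfrak{S}^{\sharp}_0-\mathfrak{S}^{\dag}_0)+\int_0^t\bigl[\al(\mathfrak{S}^{\sharp}_v)-\al(\mathfrak{S}^{\dag}_v)\bigr]dv+a\bigl[w^{\sharp}(t)-w^{\dag}(t)\bigr],
$$
applying $|\cdot|$ and invoking Gronwall produces
$$
\sup_{t\in[0,T]}|\mathfrak{S}^{\sharp}(t)-\mathfrak{S}^{\dag}(t)|\leq e^{L_M T}\Bigl(|\mathfrak{S}^{\sharp}_0-\mathfrak{S}^{\dag}_0|+a\sup_{t\in[0,T]}|w^{\sharp}(t)-w^{\dag}(t)|\Bigr),
$$
which gives local Lipschitz continuity of $\mathcal{J}_2$ at every point, hence continuity. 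The main obstacle, as anticipated, is the a priori bound step for $\mathcal{J}_2$: the non-global Lipschitzianity of $\al$ forces one to first localise the solution before the familiar Gronwall machinery can be deployed. Once this bound is secured, the rest of the argument is identical in spirit to the calculation already performed in the proof of Theorem \ref{Thm:SPDe}.
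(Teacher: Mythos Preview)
Your proposal is correct and follows the same Gronwall template as the paper. For $\mathcal{J}_1$ the two arguments are essentially identical; you correctly note that since $\eta$ enters \eqref{SPDe1} additively, no integration by parts is needed, which slightly streamlines the calculation compared to the proof of Theorem \ref{Thm:SPDe}.

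For $\mathcal{J}_2$ your argument is in fact more careful than the paper's. The paper applies \eqref{alalmostlip} and passes directly from $\lv \al(\Ss_v)-\al(\Sdag_v)\rv$ to $\lv \Ss_v-\Sdag_v\rv$, citing \eqref{solofODEisbdd} to absorb the factor $(1+\lv \Ss_v\rv)$. But \eqref{solofODEisbdd} bounds the \emph{ODE} solution $S(t)$, not the noisy process $\mathfrak{S}(t)$, so an a priori bound on $\mathfrak{S}$ is really needed before Gronwall can be invoked. You supply this missing step explicitly, using that the extended $\al$ of Remarks \ref{rems:deceqn} is bounded above on all of $\R$ (giving an upper bound $M_+$) and bounded below on $(-\infty,M_+]$ (giving a lower bound $M_-$). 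Your two-step structure---localise first, then apply the now-genuine Lipschitz bound on the compact range---is the rigorous way to handle the merely local Lipschitzianity of $\al$ and closes a small gap in the paper's presentation, at the cost of a slightly longer argument.
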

\begin{proof} Continuity of the map $\mathcal{J}_1$ can be shown with a calculation in the same spirit of the one done for the map $\mathcal{J}$, so we only sketch the proof of the continuity of the map $\mathcal{J}_2$. To this end we will use
\eqref{solofODEisbdd} and the Lipshitzianity of $\al$. Let $\Ss(t)$ and $\Sdag(t)$ be the images through the map $\mathcal{J}_2$ of the pairs 
$(\Ss_0, \ws(t))$ and $(\Sdag_0, \wdag(t))$,  respectively. Then
\begin{align}
\lv \Ss_t-\Sdag_t \rv &\leq \lv \Ss_0 -\Sdag_0 \rv + \int_0^t 
\lv \al(\Ss_v)-\al(\Sdag_v) \rv dv + a \lv \ws_t-\wdag_t \rv\nonumber\\
& \les  \lv \Ss_0 -\Sdag_0 \rv + \int_0^t 
\lv \Ss_v-\Sdag_v \rv dv  + a \lv \ws_t-\wdag_t \rv. \label{nlip}
\end{align}
Now we can conclude by Gronwall's Lemma. 
\end{proof}

\section{Statement of Main Theorems and Heuristics of Proofs}\label{sec:res and heur}
In this section we give a precise statement of the main results of the paper, Theorem \ref{thm:weak conv of Skn} and Theorem \ref{thm:mainthm1} below,  and outline the heuristic arguments which are at the basis of the proof of such results.  The rigorous proofs of Theorem \ref{thm:weak conv of Skn} and Theorem \ref{thm:mainthm1}  are detailed in Section \ref{sec:proof of wekconvof Skn} and Section \ref{sec:proofofmainthm}, respectively, and they 
consist in quantifying the formal  approximations  presented in this section. 
 The structure of such proofs relies on the    continuous mapping argument which  is presented in Section \ref{sec:contmapping}.

While describing the main intuitive ideas of the proof, we will also try and  emphasize the differences with  
the analysis presented in \cite{Matt:Pill:Stu:11} in the stationary case. 
Here and throughout the paper we will   use a notation analogous to the one used in \cite{Matt:Pill:Stu:11}. 
\subsection{Statement of Main Results}

Let us define the set $\hsint$ as follows:
\be\label{spaceint}
\hsint:=\left\{x \in \hs : \lim_{N \ra \infty} \frac{1}{N}\sum_{i=1}^N  \frac{\lv x^i \rv^2} { \lambda_i^2}< \infty \right\}\, .
\ee
 \begin{theorem}\label{thm:weak conv of Skn}
Let Assumption \ref{ass:1}  hold and let $x_0\in \hsint$. Let $\{S_k^N\}\subset \R_+$ be the double sequence defined in \eqref{skn} and started at $S_0^N=\frac{1}{N}\sum_{i=1}^N  \lv x_{0}^{i,N} \rv^2 / \lambda_i^2 $. Let $S\bn(t)$, defined in  \eqref{interpolantofsk},  be the continuous interpolant of $S_k^N$.  Then, as $N \ra \infty$,  $S\bn(t) $    converges weakly in $C([0,T]; \R)$ to the  solution $S(t)$ of the ODE \eqref{ODE} started at  $S_0:=\lim_{N\ra \infty}S_0^N$.
\end{theorem}
We will prove Theorem \ref{thm:weak conv of Skn}  in Section \ref{sec:proof of wekconvof Skn}. For the time being, let us make the following observations. 
\begin{remark}\label{rem:convprob}\textup{
Notice that the weak limit of the double sequence $S_k^N$ is a deterministic function, therefore the above theorem also implies convergence in probability in $C([0,T];\R)$ of  $S^{(N)}(t)$ to $S(t)$.}
{\hfill$\Box$}
\end{remark}
Let us now introduce the piecewise constant interpolant of the (double) sequence $S_k^N$, i.e. the (sequence of) functions $\bar{S}\bn(t)$ defined as follows:
\be\label{piecewiseconstinterS}
 \bar{S}\bn(t)=S_k^N, \quad \mbox{for }\,\, t_k\leq t < t_{k+1}, \,\, t_k=k/N\,.
\ee 
\begin{lemma} \label{lem:asconv} Under the assumptions of Theorem \ref{thm:weak conv of Skn}, for every fixed $t>0$, 
$$
{S}^{(N)}(t) \rightarrow S(t) \quad \mbox{ almost surely}
$$ 
and 
$$
\bar{S}^{(N)}(t) \rightarrow S^{(N)}(t) \quad \mbox{ almost surely}.
$$ 
Therefore, 
$$
\bar{S}^{(N)}(t) \rightarrow S(t) \quad \mbox{ almost surely}.
$$ 
\end{lemma}
\begin{proof}[Proof  of Lemma \ref{lem:asconv}] The proof of this lemma can be found in Appendix B. 
\end{proof}
Consider now the set $\hsintt$ defined as the set of  $x \in \hsint$ such that 
\begin{itemize}
\item for all  $p\geq 0$, 
\be\label{firstcondp}
\lim_{N \ra \infty} \sum_{i=1}^N  i^{2s}\lambda_i^2\frac{\lv x^i \rv^{2p}} { \lambda_i^{2p}}< \infty,
\ee
\item there exists some $\epsilon >0$, such that 
\be\label{boundbelowepsilon}
 \lim_{N \ra \infty} \frac{1}{N}\sum_{i=1}^N  \frac{\lv x^{i} \rv^2 }{ \lambda_i^2} \geq \epsilon >0.
\ee
\end{itemize}

\begin{theorem}\label{thm:mainthm1}
Let Assumption \ref{ass:1}  hold and  $x_0\in \hsintt$. Then, as $N \ra \infty$ the continuous interpolant $x\bn(t)$ of the chain 
$\{x_k\}_k \subset \hs$ (defined in \eqref{interpolant} and \eqref{chainxcomponents}, respectively) and started at $x_0$, converges weakly in $C([0,T]; \hs)$ to the  solution $x(t)$ of equation \eqref{informalSPDe} started at $x_0$. We recall that the time-dependent function $S(t)$ appearing in \eqref{informalSPDe} is the solution of the ODE \eqref{ODE}, started at $S_0:= \lim_{N \ra \infty} \frac{1}{N}\sum_{i=1}^N  \lv x_{0}^{i} \rv^2 / \lambda_i^2$.
\end{theorem}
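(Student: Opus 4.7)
My plan is to prove the theorem via the continuous mapping argument based on Theorem \ref{contofupsilon}, combined with Theorem \ref{thm:weak conv of Skn}, which supplies the convergence $S^{(N)}\Rightarrow S$ needed to evaluate the $d_\ell$ and $\Gamma_\ell$ coefficients in the limit. The first step is to write one step of the chain as
\[
x_{k+1} - x_k = \frac{1}{N}\, F(x_k^N)\, d_\ell(S_k^N) \;+\; r_k^N \;+\; M_{k+1}^N ,
\]
where $M_{k+1}^N$ is a martingale increment with respect to the filtration generated by the $\xi_{j+1}$, $j\le k$, and $r_k^N$ is an $\mathcal{H}^s$-valued remainder that I will need to control at order $o(1/N)$ in a suitable norm. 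From this I build the continuous interpolant $x^{(N)}(t)$ and a corresponding $\mathcal{H}^s$-valued martingale $W^{(N)}(t)$, and rewrite
\[
x^{(N)}(t) \;=\; x_0^N + \int_0^t F\bigl(x^{(N)}(v)\bigr)\, d_\ell\bigl(S^{(N)}(v)\bigr)\, dv + W^{(N)}(t) + R^{(N)}(t),
\]
with $R^{(N)}\to 0$ in $C([0,T];\mathcal{H}^s)$. Comparing with \eqref{SPDe1}--\eqref{xsolofSPDE}, the conclusion will follow by the continuity of $\mathcal{J}_1$ (and a routine extension of that continuity to allow the bounded Lipschitz function $S$ to vary continuously in the uniform topology on $C([0,T];\mathbb{R}_+)$, justified by the uniform boundedness and Lipschitzness of $d_\ell,\Gamma_\ell$ supplied by Lemma \ref{lem:propofDandGamma}).

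For the drift identification, I will compute $\mathbb{E}_{x_k}[\gamma_{k+1}\mathcal{C}^{1/2}\xi_{k+1}^N]=\mathbb{E}_{x_k}[(1\wedge e^{Q_k})\mathcal{C}^{1/2}\xi_{k+1}^N]$ by Taylor-expanding $Q_k$ in \eqref{extdefQ} around $x_k$ in powers of $N^{-1/2}$. The leading Gaussian part of $Q_k$ produces exactly the real-valued functional $S_k^N$ through the identity
\[
\tfrac{1}{2}\|\mathcal{C}^{-1/2}x_k^N\|^2 - \tfrac{1}{2}\|\mathcal{C}^{-1/2}y_k^N\|^2 \;=\; -\ell^2 S_k^N - \sqrt{\tfrac{2\ell^2}{N}}\sum_{i=1}^N \tfrac{x_k^{i,N}}{\lambda_i}\xi_{k+1}^{i,N} \;+\; \text{(Gaussian remainder)},
\]
whence a Gaussian integration in $\xi$ recovers the factor $d_\ell(S_k^N)$ multiplying $-x_k$; the $\Psi$-contribution, controlled via Lemma \ref{lem:lipschitz+taylor}, contributes the $-\mathcal{C}\nabla\Psi(x_k)\,d_\ell(S_k^N)$ term. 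The quadratic variation of the martingale part is handled analogously: conditioning on $x_k$, $\mathbb{E}_{x_k}[\gamma_{k+1}\|\mathcal{C}^{1/2}\xi_{k+1}^N\|_s^2]$ reproduces the trace of $\Gamma_\ell(S_k^N)\mathcal{C}_s$, and a standard martingale CLT applied to finite-dimensional projections of $W^{(N)}$, combined with tightness in $C([0,T];\mathcal{H}^s)$ via Aldous-type criteria and \eqref{c1/2xi}, yields $W^{(N)}\Rightarrow \int_0^\cdot \Gamma_\ell^{1/2}(S(v))\,dW(v)$.

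The main obstacle, and the point where the out-of-stationarity setting departs sharply from \cite{Matt:Pill:Stu:11}, is the uniform-in-$N$ control of moments of $\|x_k^N\|_s$ and of $S_k^N$ needed to bound the remainders $r_k^N$ and $R^{(N)}$. In the stationary case one reduces all almost-sure statements to computations under $\pi_0$, exploiting absolute continuity; here there is no such reduction and one must establish these moment bounds propagationally along the discrete-time evolution. Concretely, I will prove a discrete Lyapunov inequality that upgrades the convergence $S^{(N)}\Rightarrow S$ from Theorem \ref{thm:weak conv of Skn} into uniform bounds $\sup_N\sup_{k\le NT}\mathbb{E}\,\|x_k^N\|_s^p<\infty$ for suitable $p$, using the boundedness of $d_\ell$ and $\Gamma_\ell$ and \eqref{solofODEisbdd}. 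With those moment bounds in hand, Taylor remainders in $Q_k$ (third-order terms in $N^{-1/2}\mathcal{C}^{1/2}\xi$) and the Lipschitz control of $d_\ell,\Gamma_\ell$ combine to give the required $o(1)$ smallness of $R^{(N)}$ in $C([0,T];\mathcal{H}^s)$, closing the continuous mapping argument.
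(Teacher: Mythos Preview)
Your overall architecture --- drift-martingale decomposition, moment bounds propagated along the chain, martingale CLT for the noise, and continuous mapping via $\mathcal{J}_1$ --- is exactly the route the paper takes. But two points need correction.

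First, the displayed expansion of the Gaussian quadratic form is wrong: one has
\[
\tfrac{1}{2}\|\C^{-1/2}x_k^N\|^2-\tfrac{1}{2}\|\C^{-1/2}y_k^N\|^2
= -\frac{\ell^2}{N}\sum_{j=1}^N|\xi_{k+1}^{j,N}|^2
-\sqrt{\tfrac{2\ell^2}{N}}\sum_{j=1}^N\frac{x_k^{j,N}}{\lambda_j}\xi_{k+1}^{j,N},
\]
so the mean of $Q_k$ is approximately $-\ell^2$, not $-\ell^2 S_k^N$; the quantity $S_k^N$ enters only through the \emph{variance} $2\ell^2 S_k^N$ of the linear term. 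This is precisely what gives $Z_{\ell,k}\sim\mathcal{N}(-\ell^2,2\ell^2 S_k^N)$ in \eqref{approx4} and hence the functions $d_\ell,\Gamma_\ell$.

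Second, and more substantively, ``a Gaussian integration in $\xi$ recovers the factor $d_\ell(S_k^N)$'' hides the core technical work. A Taylor expansion of $1\wedge e^{Q_k}$ in powers of $N^{-1/2}$ is not available because $x\mapsto 1\wedge e^x$ is not differentiable at $0$. What the paper actually does (Section~\ref{sec:heurdiffcoeffforchainx} heuristically, Lemma~\ref{lem:stimaEkn} rigorously) is to isolate the single coordinate $\xi_{k+1}^{i,N}$ inside $Q_k$, write $R=R^i-\sqrt{2\ell^2/N}\,\zeta_k^{i,N}\xi_{k+1}^{i,N}+O(1/N)$, and apply the exact identity of Lemma~\ref{lemma:zexpz},
\[
\EE\bigl[X(1\wedge e^{aX+b})\bigr]=a\,e^{a^2/2+b}\,\Phi\!\left(-\tfrac{b}{|a|}-|a|\right),
\]
to perform the $\xi^i$-integration. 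The resulting expression is then reduced to $d_\ell(S_k^N)$ through a six-term decomposition $e_k^{i,N}=\sum_{h=1}^6 e_{h,k}^{i,N}$. The order in which these approximations are made matters out of stationarity (Remark~\ref{rem:differ2}): one must first pass from $R^i$ to $R$ using the Lipschitz property \eqref{lipfunc} of $x\mapsto e^x\Phi(-x/a)$, and only \emph{then} replace $\Phi$ by the indicator $\mathbf{1}_{\{R<0\}}$. Doing it in the stationary order fails here because the bounds \eqref{lembound22}--\eqref{lembound26} on $\sum_i i^{2s}\lambda_i^2|\zeta_k^{i,N}|^{2p}$ are weaker than in the stationary case (cf.\ Remark~\ref{rem:differ1}). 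Your proposal does not reflect this, and without it the remainder $r_k^N$ will not be $o(1/N)$ in $\mathcal{H}^s$.
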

We will prove  Theorem \ref{thm:mainthm1} in Section \ref{sec:proofofmainthm}.  Note that in the above statement we are picking a deterministic initial condition. However it is worth noting that $x_0\in \hsintt$ almost surely if $x_0$ is drawn at random from the stationary measure \eqref{targetmeasure} . We will make some remarks on condition \eqref{boundbelowepsilon} at the end of Subsection \ref{subs52}. As for condition \eqref{firstcondp}, strictly speaking this does not need to be satisfied for all $p\geq 0$;  a finite, sufficiently large $p$ would suffice. However we refrain from determining the optimal $p$, which would distract from the main goals of the paper, and 
we state the result as it is, based on \eqref{firstcondp}.

\subsection{Formal Analysis of the Acceptance Probability}\label{subs52}
Gaining an intuition about the behaviour of the  acceptance probability $\alpha(x, \xi)$, defined in \eqref{accprob}, is at the core of the proof of the main result of this paper,   Theorem \ref{thm:mainthm1}. 
We present here a formal calculation that helps impart such  intuition.  We stress again that the calculations of this section are purely formal and will be made rigorous from Section \ref{sec:proof of wekconvof Skn} on . In this spirit we will use the loose notation $A^N \simeq B^N$ when, for $N$ large, $A^N$ is ``approximately equal" to $B^N$, and $A^N \approx B^N$ when, for $N$ large, $A^N$ is ``approximately distributed" according to $B^N$. 
 
Let us recall the notation $\Psi^N:= \Psi \circ \mathcal{P}^N$ (that is,  $\Psi^N(x):=\Psi(\PP^N(x))$) and set
\be\label{defzetakn}
\zeta_k^N:= (\C_N)^{-1/2}x_k^N+(\C_N)^{1/2} \nabla \Psi^N(x_k), \quad \mbox{where }\,\, 
\nabla \Psi^N(x_k)=\PP^N(\nabla \Psi(x_k^N)).
\ee
With these definitions, we can  further rewrite the expression \eqref{extdefQ} for $Q(x_k^N, \xi\kpo^N)$:
\begin{align}
Q(x_k, \xi\kpo)&= - \frac{\ell^2}{N} \sum_{i=1}^N \lv \xi_{k+1}\inn\rv^2- \sqrt{\frac{2\ell^2}{N}} \sum_{i=1}^N
\frac{x\kn \xi\kpo\inn}{\lambda_i}+\Psi(x^N_k)-\Psi(y^N\kpo)\nonumber\\
&= - \frac{\ell^2}{N} \sum_{i=1}^N  \lv \xi_{k+1}\inn\rv^2- \sqrt{\frac{2\ell^2}{N}}
\lan \C^{-1/2}x_k^N, \xi\kpo^N\ran +\Psi(x_k^N)-\Psi(y_{k+1}^N)\nonumber\\
&= - \frac{\ell^2}{N}  \|\xi_{k+1}^N\|^2- \sqrt{\frac{2\ell^2}{N}}
\lan \zeta_k^N, \xi\kpo^N\ran +\Psi(x_k^N)-\Psi(y_{k+1}^N)+ \sqrt{\frac{2 \ell^2}{N}} \lan \C_N^{1/2}\nabla\Psi^N(x_k^N), \xi\kpo^N\ran\nonumber.
\end{align}
Therefore  setting 
$$
r^N(x_k,\xi_{k+1}):= \Psi(x_k^N)-\Psi(y_{k+1}^N)+ \sqrt{\frac{2 \ell^2}{N}}    \lan \C_N^{1/2}\nabla\Psi^N(x_k^N), \xi\kpo^N\ran
$$
and 
\be \label{Q=R+rN}
R(x_k^N, \xi\kpo^N):= - \frac{\ell^2}{N} \sum_{i=1}^N \lv \xi_{k+1}\inn\rv^2- \sqrt{\frac{2\ell^2}{N}}
\lan \zeta_k^N, \xi\kpo^N\ran,
\ee
 we obtain 
\be\label{decompQvera}
Q(x_k^N, \xi\kpo^N)=R(x_k^N, \xi\kpo^N)+ r^N(x_k, \xi\kpo)\,.
\ee
In \cite{Matt:Pill:Stu:11} it is shown that 
\be \label{57p} 
 \lv r^N(x_k,\xi)\rv \les  \frac{\| \C^{1/2} \xi\|_s^2}{N} \,; 
\ee
therefore
\be\label{exprn}
\EE \lv r^N(x_k,\xi_{k+1})\rv \les \frac{1}{N}, 
\ee
 see \cite[eqn. (2.32)]{Matt:Pill:Stu:11}. 
The above \eqref{57p}-\eqref{exprn} are true whether the chain is started in stationarity or not, as they are only a consequence of the properties of $\Psi$ (see \eqref{e.taylor.order2}) and of the noise $\xi_{k+1}$, see \eqref{c1/2xi}. 
Using  \eqref{exprn}, 
\be\label{defofR}
Q(x_k^N, \xi\kpo^N)\simeq R(x_k^N, \xi\kpo^N).
\ee
Looking at the definition of $R$, equation \eqref{Q=R+rN}, and observing that by the Law of Large Numbers
\be\label{LLNapprox}
\frac{1}{N} \sum_{j=1}^N \lv \xi_{k+1}\jnn\rv^2 \lra 1\, ,
\ee
we  deduce that $R\simeq G$ (see Lemma \ref{lem:Wassbound}),  where 
\be\label{defGi}
G:= -\ell^2 - \sqrt{\frac{2\ell ^2}{N}} \sum_{j =1}^N \zeta_k^{j,N}\xi_{k+1}\jnn,
\quad \mbox{ so that, given $x_k$, } \quad 
G \sim \cN\left(-\ell^2,  \frac{2\ell ^2}{N} \sum_{j =1}^N \lv\zeta_k^{j,N} \rv^2\right)\, .
\ee
 We will show 
$$
   \frac{1}{N}\sum_{j = 1}^N \lv\zeta_k^{j,N} \rv^2\simeq 
\frac{1}{N} \sum_{j=1}^N\frac{\lv x_k^{j,N}\rv^2}{\lambda_j^2} = S_k^N.
$$
This can be intuitively understood by observing that in \eqref{defzetakn} the ``dominating contribution" comes from the first addend. 
The above approximation is formalized by  \eqref{z=x+psi} and \eqref{boundonmomentsofx} and it implies   $G \approx Z_{\ell, k}$,  where 
\be\label{approx4} 
 Z_{\ell, k}:= -\ell^2 - \sqrt{\frac{2\ell^2}{N}}\sum_{j=1}^N \frac{x\kjn}
 {\lambda_j} \xi\kpo\jnn \quad \mbox{so that, given $x_k^N$,} \quad  Z_{\ell, k} \sim \cN(-\ell^2, 2\ell^2 S_{k}^N). 
\ee
In conclusion, the  formal analysis  presented so far suggests that we may use the approximations
\be\label{approx5}
Q(x_k^N, \xi\kpo^N) \simeq R \approx \cN \left(  -\ell^2, 2\ell^2\, S_k^N \right). \ee
 
In \cite{Matt:Pill:Stu:11} it is proved that if we start from stationarity then  the sequence
$S_k^N$ converges (for fixed $k$, as $N \ra \infty$) to $1$ almost surely (see \eqref{skstat}).  We will show that if  we start the chain out of stationarity, i.e. $x_0$ is any point in $\h^s$, then 
\be\label{weakcontost}
S_k^N=\frac{1}{N}\sum_{i=1}^N \frac{\lv x_k^{i,N}\rv^2}{\lambda_i^2}\stackrel{d}{\lra} S(t), \quad \mbox{as } N \ra \infty,\, \mbox{ for } t_k\leq t < t_{k+1},
\ee
where $t_k=k/N$ and $S(t)$ is the solution of the ODE \eqref{ODE}. This is the main conceptual difference between our work and \cite{Matt:Pill:Stu:11}, all the other differences are technical consequences of this fact. 

Looking at \eqref{approx5}-\eqref{weakcontost}, we can explain why we are assuming \eqref{boundbelowepsilon}: roughly speaking,  if the initial datum $S_0$ is strictly positive then the limit $S(t)$ is strictly positive for every $t\geq 0$, so  the Gaussian variable on the RHS of  \eqref{approx5} always has a strictly positive variance.  If instead $S_0=0$, then at zero one would have $ Q_0=Q(x_0, \xi_1) \simeq  -\ell^2$ and therefore the acceptance probability at the first step  simply tends to $e^{-\ell^2}$; however this would only be true at zero as, even if $S_0=0$, the solution of the ODE \eqref{ODE} becomes immediately strictly positive for $t>0$ (see Theorem \ref{thm:existenceuniquenessforODE}). To avoid having to take into account also this further possibility (which does not add anything to the overall understanding of the algorithm), and to streamline the analysis, we make the simplifying assumption \eqref{boundbelowepsilon}.

The approximation \eqref{approx5} dictates the behaviour of the acceptance probability. With the present algorithm the average acceptance probability does not tend to one (as $N \ra \infty$, for $t_k\leq t< t_{k+1}$). This is one of the disadvantages 
of using the method analysed in this paper, in comparison to using algorithms 
 which are well defined in infinite dimensions. 

\subsection{Formal Derivation of the Drift Coefficient of Equation \eqref{informalSPDe}}\label{sec:heurdiffcoeffforchainx}
Let us first clarify the use of the notation that we will make in the following. The definition of $x_{k+1}$ \eqref{chaininh} contains two sources of randomness: the Gaussian noise $\xi_{k+1}$ and the Bernoulli random 
variable $\gamma_{k+1}$. With this in mind, 
when we write $\EE_k(\cdot)$ we will mean expectation with respect to $\xi_{k+1}$  and $\gamma_{k+1}$, given $x_k$. In some cases, when we want to emphasize the fact that the expectation is taken with respect to $\xi_{k+1}$ and $\gamma_{k+1}$, we will write explicitly    $\EE_k^{\xi, \gamma}$. In the same way, if we want to stress that expectation is being taken with respect to $\xi_{k+1}$, we write  $\EE_k^{\xi}$. 
According to (\ref{chainxcomponents}), the $i$-th component of the approximate drift is given by
\begin{align}
N \EE_k(x\kpon-x\kn) &= N \EE_k\left( \gamma\kpo \sqrt{\frac{2\ell^2}{N}}\lambda_i \,\xi\kpo\inn \right)
= \sqrt{2N\ell^2} \lambda_i \, \EE_k^{\xi, \gamma}(\gamma\kpo\, \xi\kpo\inn) \nonumber \\
&= \sqrt{2N\ell^2} \lambda_i \, \EE_k^{\xi}(\alpha\kpo\, \xi\kpo\inn)=
 \sqrt{2N\ell^2} \lambda_i \, \EE_k^{\xi}
\left[ \left( 1\wedge e^{Q(x_k^N, \xi\kpo^N)}  \right)  \xi\kpo\inn\right].\label{appdr1}
\end{align}
(We briefly explain at the end of Appendix A how the first equality in \eqref{appdr1} is obtained.)  
For a reason that will be clear in a few lines we further split the RHS of \eqref{Q=R+rN} as follows \footnote{This splitting is standard in the analysis of high dimensional MCMC, see \cite{Matt:Pill:Stu:11}.}
\begin{align}
R(x_k^N, \xi\kpo^N)&= - \frac{\ell^2}{N} \sum_{j\neq i}^N \lv \xi_{k+1}\jnn\rv^2- \sqrt{\frac{2\ell^2}{N}}
\sum_{j\neq i} \zeta_k^{j,N} \xi\kpo^{j,N}-
\frac{\ell^2}{N}  \lv \xi_{k+1}^{i,N}\rv^2- \sqrt{\frac{2\ell^2}{N}}
 \zeta_k^{i,N} \xi\kpo^{i,N}\, \nonumber\\
&= : R^i(x_k^N, \xi\kpo^N)-
\frac{\ell^2}{N}  \lv \xi_{k+1}\inn\rv^2- \sqrt{\frac{2\ell^2}{N}}
 \zeta_k^{i,N} \xi\kpo\inn\, , \label{defRi}
\end{align}
hence
\be\label{approx2}
Q(x_k^N, \xi\kpo^N)\simeq  R^i(x_k^N, \xi\kpo^N)- \sqrt{\frac{2\ell^2}{N}}
 \zeta_k^{i,N} \xi\kpo\inn\,.
\ee
 Using \eqref{approx2} we then have
\be\label{approx3}
\EE_k^{\xi}
\left[ \left( 1\wedge e^{Q(x_k^N, \xi\kpo^N)}  \right)  \xi\kpo\inn\right] \simeq 
\EE_k^{\xi}
\left[ \left( 1\wedge e^{ R^i(x_k^N, \xi\kpo^N)- \sqrt{\frac{2\ell^2}{N}}
\zeta_k^{i,N} \xi\kpo^i } \right) \xi\kpo\inn\right]\,.
\ee
We now use \cite[eqn. (2.36)]{Matt:Pill:Stu:11}, which we recast here for the reader's convenience. 
\begin{lemma}\label{lemma:zexpz}
Let $X$ be a real valued r.v.,  $X \sim \cN(0,1) $. Then for any $a,b \in \R$,
\be\label{Lemma55}
\EE \left[X\left( 1 \wedge e^{aX+b} \right)\right]= a e^{\frac{a^2}{2}+b} \, \Phi\left( -\frac{b}{\lv a \rv }-\lv a \rv \right). 
\ee
\end{lemma}
\begin{proof} See \cite[Lemma 2.4]{Matt:Pill:Stu:11}.
\end{proof}
Now notice that, given $x_k$, $R^i$ is independent of $\xi\kpo^i$ as it only contains the random variables
 $\xi^j\kpo$ for $i\neq j$. Therefore the expected value $\EE_k^{\xi}$ can be calculated by first evaluating  
$\EE_k^{\xi^i}$ and then $\EE_k^{\xi^i_{-}}$, where the latter denotes expectation with respect to $\xi \backslash \xi^i$.
With this observation we can use the above Lemma \ref{lemma:zexpz} with $a= - \sqrt{\frac{2\ell^2}{N}}
\zeta_k^{i,N} $ and $b=R^i$ to further evaluate the RHS of \eqref{approx3}; we get
\begin{align}
\EE_k^{\xi}
\left[ \left( 1\wedge e^{Q(x_k^N, \xi\kpo^N)}  \right)  \xi\kpo\inn\right] & \simeq \EE_k^{\xi}
\left[ \left( 1\wedge e^{ R^i(x_k^N, \xi\kpo^N)- \sqrt{\frac{2\ell^2}{N}}
\zeta_k^{i,N} \xi\kpo^i } \right) \xi\kpo\inn\right] \nonumber\\
&\stackrel{\eqref{Lemma55}}{=}
-\sqrt{\frac{2\ell^2}{N}}\zeta\kn e^{\frac{\ell^2}{N}\lv \zeta\kn \rv^2}
\EE_k^{\xi^i_{-}} e^{R^i}\Phi\left(   
-\frac{R^i}{\sqrt{\frac{2\ell^2}{N}}\lv\zeta\kn \rv}-\sqrt{\frac{2\ell^2}{N}}\lv\zeta\kn \rv\right)\label{eqclar}\\
&\simeq   -\sqrt{\frac{2\ell^2}{N}}\zeta\kn \,
\EE_k^{\xi^i_{-}} e^{R^i}\Phi\left(   
\frac{-R^i}{\sqrt{\frac{2\ell^2}{N}}\lv\zeta\kn \rv}\right)\nonumber\\
&=
  -\sqrt{\frac{2\ell^2}{N}}\zeta\kn \,
\EE_k^{\xi} e^{R^i}\Phi\left(   
\frac{-R^i}{\sqrt{\frac{2\ell^2}{N}}\lv\zeta\kn \rv}\right)\nonumber\\
&\simeq   -\sqrt{\frac{2\ell^2}{N}}\zeta\kn \,
\EE_k^{\xi} e^{R^i} {\bf 1}_{\{R^i<0\}} \\
& \simeq -\sqrt{\frac{2\ell^2}{N}}\zeta\kn \,
\EE_k^{\xi} e^{R} {\bf 1}_{\{R<0\}} \, . \nonumber
\end{align}

Therefore, using the approximation \eqref{approx5} (and the notation \eqref{approx4}), 
\be\label{approx6}
\EE_k^{\xi}
\left[ \left( 1\wedge e^{Q(x_k^N, \xi\kpo^N)}  \right)  \xi\kpo^i\right]  \simeq
-\sqrt{\frac{2\ell^2}{N}}\zeta\kn \,
\EE_k^{\xi} e^{Z_{\ell,k}} {\bf 1}_{\{Z_{\ell,k}<0\}} \, . 
\ee
Now a straightforward calculation shows that if $X\sim \mathcal{N} (\mu, \sigma^2)$ then 
$$
\EE\left( e^X {\bf 1}_{X<0}\right)=e^{\mu+\sigma^2/2} \, \Phi\left(  -\frac{\mu}{\sigma}-\sigma  \right).
$$
In particular this means that if $X \sim \cN(-\ell^2, 2\ell^2 a)$, for some $a>0$,  then 
\be\label{ecarexp}
\EE\left( e^X {\bf 1}_{X<0}\right)=e^{\ell^2(a-1)}\Phi \left(\frac{\ell(1-2a)}{\sqrt{2a}}\right)=\frac{1}{2\ell^2}\dl(a).
\ee 
From  \eqref{ecarexp}, \eqref{approx6} and \eqref{approx4}, we then  get
$$
\EE_k^{\xi}
\left[ \left( 1\wedge e^{Q(x_k^N, \xi\kpo^N)}  \right)  \xi\kpo\inn\right]  \simeq  -\sqrt{\frac{2\ell^2}{N}}
  \zeta\kn \frac{1}{2\ell^2} \dl(S_{k}^N) =
- \frac{1}{\sqrt{2\ell^2N}}  
\zeta\kn \dl(S_{k}^N) \, .
$$
Combining the above with \eqref{appdr1} gives
$$
N \EE_k^{\xi}(x\kpon-x\kn)\simeq - \lambda_i\zeta\kn\,\dl(S_k^N),
$$
which is the desired drift, after observing that $\lambda_i\zeta\kn$ is the $i$-th component of $\C_N^{1/2}\zeta_k^N$ and 
$$
\C_N^{1/2}\zeta_k^N =x_k^N+\C_N \, \nabla\Psi^N(x_k).
$$
 As already mentioned in the introduction, as a consequence of \eqref{skstat}, if we started the chain in stationarity then the approximate drift would not be time dependent and we would have
$$
N \EE_k^{\xi}(x\kpon-x\kn)\simeq - \lambda_i\zeta\kn\,\dl(1),
$$
which is the approximate drift of \eqref{longlimSPDEinf}.

\subsection{Formal Derivation of the Diffusion Coefficient of Equation \eqref{informalSPDe}}
\begin{align}
N \EE_k(x\kpon -x_k^{i,N})(x_{k+1}^{j,N} -x_k^{j,N})&=N \EE_k^{\xi, \gamma}
\left( \gamma\kpo \sqrt{\frac{2\ell^2}{N}}  \lambda_i \xi\kpo\inn\right) \left( \gamma\kpo \sqrt{\frac{2\ell^2}{N}}  \lambda_j \xi\kpo\jnn\right)
\nonumber\\
&= 2\ell^2 \lambda_i \, \lambda_j \EE_k^{\xi} \left(  \xi\kpo^i \, \xi\kpo^j \, 
\left(1\wedge e^{ Q(x_k,\xi\kpo)}\right)  \right), \label{apprdif1}
\end{align}
where the last equality follows analogously to  \eqref{appdr1}.  
We consider \eqref{Q=R+rN} as before,   but this time we split
$$
R(x_k^N, \xi\kpo^N)= R^{ij}(x_k^N, \xi\kpo^N)-
\frac{\ell^2}{N}  \left( \lv \xi_{k+1}\inn\rv^2 + \lv \xi_{k+1}\jnn\rv^2 \right)- \sqrt{\frac{2\ell^2}{N}}
 \left( \zeta_k^{i,N} \xi\kpo\inn + \zeta_k^{j,N} \xi\kpo\jnn \right), 
$$
where
$$
 R^{ij}(x_k^N, \xi\kpo^N) =- \frac{\ell^2}{N} \sum_{h\neq i,j}^N \lv \xi_{k+1}^{h,N}\rv^2- \sqrt{\frac{2\ell^2}{N}}
\sum_{h\neq i,j} \zeta_k^{h,N} \xi\kpo^{h,N}  \, .
$$
As before, $ Q(x_k^N, \xi\kpo^N) \simeq R^{ij}(x_k^N, \xi\kpo^N) $, so that
\begin{align}
 \EE_k^{\xi} \left(  \xi\kpo\inn \, \xi\kpo\jnn \,\left(1\wedge
 e^{ Q(x_k^N,\xi\kpo^N)}  \right) \right)
& \simeq  
 \EE_k^{\xi} \left(  \xi\kpo\inn \, \xi\kpo\jnn \, \left(1\wedge 
e^{R^{ij}(x_k^N,\xi\kpo^N)} \right) \right)\nonumber\\
&= \delta_{ij}  \EE_k^{\xi_{-}^{ij} } \left( 1\wedge e^{R^{ij}(x_k^N,\xi\kpo^N)} \right)   \label{deltaij}\\
&=\delta_{ij}  \EE_k^{\xi} \left( 1\wedge e^{R^{ij}(x_k^N,\xi\kpo^N)} \right).\nonumber
\end{align}
With the same reasoning as in \eqref{approx5},  we have 
$$
Q(x_k^N, \xi\kpo^N) \simeq R^{ij} \approx \cN \left(  -\ell^2, 2\ell^2\, S_k^N \right).
$$
(Again, if we were to consider the stationary regime, then we would have $Q(x_k^N, \xi\kpo^N)  \approx \cN \left(  -\ell^2, 2\ell^2 \right).$)
Now a simple calculation shows that if $X\sim \cN (\mu, \sigma^2)$ then 
\be\label{expmin1ex}
\EE\left( 1 \wedge e^X \right)=e^{\mu+\sigma^2/2} \, \Phi\left(  -\frac{\mu}{\sigma}-\sigma  \right)+ 
\Phi\left( \frac{\mu}{\sigma}\right)  
\ee
and in particular if $X \sim \cN(-\ell^2, 2\ell^2 a)$ for some $a>0$, 
\be\label{GammaGamma}
\EE(1 \wedge e^X)= \frac{1}{2\ell^2}\Gamma\el(a).
\ee
Hence 
\be\label{aboutgamma}
 \EE_k^{\xi} \left( 1 \wedge e^{ R^{ij}(x_k^N,\xi\kpo^N)}  \right) \simeq \frac{1}{2\ell^2} \Gamma\el(S_k^N)\, .
\ee
Putting together  \eqref{apprdif1}, \eqref{deltaij} and \eqref{aboutgamma}  we get
$$
N \EE_k(x\kpon -x_k^{i,N})(x_{k+1}^{j,N} -x_k^{j,N})=\lambda_i \lambda_j \, \delta_{ij} \Gamma\el(S_k^N)\, .
$$
\subsection{Formal Derivation of Equation \eqref{ODE}} \label{subsec:derivofeqnforst}
 We now want to describe the heuristic derivation of the limit \eqref{weakcontost}. Let us start with the drift:
\begin{align}
N \EE_k (S\kpo^N-S_k^N) & =\EE_k \sum_{i=1}^N\left[  \frac{\lv x\kpon \rv^2}{\lambda_i^2}  - 
  \frac{\lv x\kn \rv^2}{\lambda_i^2}  \right]  \nonumber\\
&= \EE_k  \left[  \gamma\kpo  \left( \frac{2\ell^2}{N} \sum_{i=1}^N   \lv  \xi\kpo\inn\rv^2
+2 \sqrt{\frac{2\ell^2}{N} }\sum_{i=1}^N \frac{ x\kn \xi\kpo\inn}{\lambda_i}
  \right) \right]\label{useforsk-skp1}\\
& = \EE_k \left[ \left(1\wedge e^{Q(x_k, \xi\kpo)} \right)\left(-2R(x_k, \xi\kpo) \right)\right] + \EE_k\hat{r}^N \label{useinlemma}
\end{align}
where 
\be\label{rhatNdef}
\hat{r}^N:= -2 \sqrt{\frac{2\ell^2}{N}} \left[ \gamma\kpo
\langle  (\C_N)^{1/2} \nabla\Psi^N (x_k^N), \xi_{k+1}^N \rangle \right] \,.
\ee
We will show (as a consequence of \eqref{c12dpsi} and \eqref{boundonmomentsofx}) that $\hat{r}^N$ is negligible for large $N$. So, by \eqref{defofR} and \eqref{useinlemma},
\be \label{A1A1}
N \EE_k (S\kpo^N-S_k^N) \simeq  \EE_k \left[ \left(1\wedge e^{R(x_k^N, \xi\kpo^N)} \right)\left(-2R(x_k^N, \xi\kpo^N) \right)\right] . 
\ee
Now observe that if $X \sim \cN(\mu, \sigma^2)$ then, 
$$ 
\EE\left[-2X \left( 1\wedge e^{X} \right) \right]\stackrel{\eqref{Lemma55}, \eqref{expmin1ex}}{=}e^{\mu+\sigma^2/2} \Phi\left( -\frac{\mu}{\sigma}-\sigma\right)
(-2\mu-2\sigma^2)-2\mu\, \Phi\left( \frac{\mu}{\sigma} \right),
$$
so that, if $X \sim \cN(-\ell^2, 2\ell^2 a)$ for some $a>0$, we have 
\be\label{useinlemma1}
\EE(-2X(1\wedge e^X))=\al(a).
\ee
Therefore, by \eqref{approx5}, \eqref{A1A1} and the above,  we conclude
$$ N \EE_k (S\kpo^N-S_k^N)\simeq \al(S_k^N).  $$ 
Showing that the diffusion  coefficient  for $S_k^N$ vanishes is a consequence of the calculation that we have just done, indeed
\begin{align}
N\EE_k (S\kpo^N-S_k^N)^2&=\frac{1}{N}\EE_k  \left[  \sum_{i=1}^N \frac{\lv x\kpon \rv^2}{\lambda_i^2}  - 
  \frac{\lv x\kn \rv^2}{\lambda_i^2}  \right]^2 \nonumber\\
& \simeq \frac{1}{N} \EE_k\left[ (1\wedge e^{R(x_k^N,\xi\kpo^N)})^2  R^2(x_k^N,\xi\kpo^N)  \right] \nonumber\\
&\leq \frac{1}{N} \EE_k\left[  R^2(x_k^N,\xi\kpo^N)  \right] \simeq 
\frac{1}{N}\EE_k \lv Z_{\ell,k}\rv^2\simeq\frac{2 \ell^2 \, S_k^N}{N}\nonumber.
\end{align}
We will prove that   $S_k^N$'s  are uniformly bounded in $N$ and $k$ (in the sense of Lemma \ref{lem:bound1}),  hence $(2 \ell^2 \, S_k^N) / N \ra 0$.
\subsection{Suboptimal Scalings for the Proposal Variance}
\label{rem:betaneq1}
Consider the Random Walk algorithm with  proposal \eqref{propbeta}, for $\beta \neq 1$. 
In this case the acceptance probability becomes
$$
\alpha^{\beta}(x, \xi):= 1 \wedge \exp Q^{\beta}(x,\xi), 
$$
where, with the same reasoning leading to \eqref{approx5}, 
\be\label{approxbeta}
Q^{\beta}(x_k, \xi\kpo)=:Q_k^{\beta} \simeq R_k^{\beta} \sim \cN (-\ell^2 N^{1-\beta}, 2\ell^2 N^{1-\beta} S_{k}^N). 
\ee
Assuming that $S_0$ is finite, one can show that $S_k^N$ remains bounded (uniformly in $k$ and $N$). Therefore, if we look at the average acceptance probability, we have
\begin{align*}
\EE(1 \wedge e^{ Q^{\beta}(x_k,\xi_{k+1})})  \stackrel{\eqref{expmin1ex}}{=}  &
\Phi \left( \frac{-\ell^2 N^{(1-\beta)/2}}{\sqrt{2\ell^2 S_k^N}} \right)\\
 + &  e^{\ell^2 N^{1-\beta}(S_k^N -1)} \Phi \left( \frac{\ell^2 N^{(1-\beta)/2} (1-2S_k^N)}{\sqrt{2\ell^2 S_k^N}} \right).
\end{align*}
Therefore, if $\beta>1$ the acceptance probability tends to one as $N \ra \infty$, if $0 \leq \beta<1$ it tends to zero.

\section{Continuous Mapping Argument}\label{sec:contmapping}
In this section we explain the continuous mapping  arguments that the proofs of 
Theorem \ref{thm:weak conv of Skn} and Theorem \ref{thm:mainthm1}  rely on.  The continuous mapping argument that we use here is analogous to the one used in \cite{Matt:Pill:Stu:11,MR3024970}. The only difference is that the drift and diffusion coefficient of \eqref{informalSPDe} are time dependent. 

 Section \ref{subs:cma1} and Section \ref{subs:cma2}  contain the outline of the mapping argument that we will use in  the proof of Theorem \ref{thm:weak conv of Skn} and Theorem \ref{thm:mainthm1}, respectively.

\subsection{Continuous Mapping Argument for \eqref{ODE} (used in the  Proof of
 Theorem \ref{thm:weak conv of Skn})} \label{subs:cma1}
 Consider the chain  $S_k^N$, defined in \eqref{skn} and let $S\bn(t)$ and $\bar{S}\bn (t)$ be the continuous and piecewise constant interpolants of such a chain, respectively;  we recall that $S\bn(t)$  and $\bar{S}\bn(t)$ have been 
defined in \eqref{interpolantofsk}  and \eqref{piecewiseconstinterS}, respectively.
Decompose the chain $S_k^N$ into its drift and martingale part:
$$
S\kpo^N=S_k^N+\frac{1}{N}\al^N(x_k^N)+\frac{1}{\sqrt{N}}M_{k}^{2,N},
$$
where
\be\label{approximatedrifts}
\al^N(x_k^N):=N \EE_k \left[ S\kpo^N-S_k^N \right]
\ee
and 
\be\label{m2kn}
M_{k}^{2,N}:=\sqrt{N} \left[S\kpo^N- S_k^N - \frac{1}{N}\al^N(S_k^N)  \right].
\ee
We will show in Lemma \ref{lem:AlN-Al}
and Lemma \ref{lem:Alx-Aly} that $\al^N(x_k^N)$ converges to $A_{\ell}(S(t))$. \footnote{While the approximate drift $\al^N(x_k^N)$ of the chain $S_k^N$ depends only on $x_k^N$, the limiting drift $A_{\ell}$ depends only on $S(t)$. This is coherent with the fact that  $S_k^N$ depends only on $x_k^N$: in the limit, the  dependence of the drift on  $S_k^N$ appears explicitly. } 
Now a straightforward calculation (completely analogous to the one in  \cite[Appendix A]{Ottobre2016}) shows that 
$$
S\bn(t)=S_k^N+\int_{t_k}^t \al^N(\bar{x}^{(N)}(v)) dv+ \sqrt{N}(t-t_k)M_k^{2,N}, \quad \mbox{when } t_k\leq t <t\kpo \, ,
$$
where $\bar{x}^{(N)}$, the piecewise constant interpolant of the chain $\{x_k\}_k$,   is defined in \eqref{piecewiseconstinterx} below. Therefore 
$$
S\bn(t)=S_0^N+\int_{0}^t \al^N(\bar{x}\bn (v)) dv+\frac{1}{\sqrt{N}} 
\sum_{j=0}^{k-1}M_j^{2,N} +\sqrt{N}(t-t_k)M_k^{2,N}, 
\quad \mbox{for any }t \in [0,T].  
$$
Setting
\be\label{wN}
w^N(t):=\frac{1}{\sqrt{N}} \sum_{j=0}^{k-1}M_j^{2,N} +\sqrt{N}(t-t_k)M_k^{2,N},
\ee
we can rewrite the above as
\begin{align}
S\bn(t)&=S_0^N+ \int_{0}^t \al^N(\bar{x}\bn (v)) dv+ w^N(t)\nonumber\\
&=S_0^N+\int_{0}^t \al(S\bn(v)) dv+\hat{w}^N(t) \, ,\label{contmapwaSN}
\end{align}
where, for all $t \in [0,T]$,
\begin{align}
\hat{w}^N(t)&:= \int_0^t \left[ \al^N(\bar{x}\bn (v))-\al(S\bn(v)) \right] dv + w^N(t)\nonumber \\
&= \int_0^t \left[ \al^N(\bar{x}\bn (v))-\al(\bar{S}\bn(v)) \right] dv 
+\int_0^t \left[ \al (\bar{S}\bn (v))-\al(S\bn(v)) \right] dv + 
w^N(t). \label{whatN}
\end{align}
Equation \eqref{contmapwaSN} shows that $S\bn(t)=\mathcal{J}_2(S_0^N, \hat{w}^N)$, where $\mathcal{J}_2$ is the map defined in Theorem \ref{contofupsilon}. By the continuity of the map $\mathcal{J}_2$, if we show that  $\hat{w}^N$ converges weakly to zero in $C([0,T]; \R)$,  then $S\bn(t)$ converges weakly to the solution of the ODE \eqref{ODE}. The weak convergence of $\hat{w}^N$ to zero will be proved in Section \nolinebreak \ref{sec:proof of wekconvof Skn}.

 Now we outline the continuous mapping argument for the chain $x\kkn$ and in doing so we shall fix some more notation.

\smallskip

\subsection{Continuous Mapping Argument for \eqref{informalSPDe} (used in the Proof of Theorem \ref{thm:mainthm1})}\label{subs:cma2}
We now consider the chain that we are actually interested in, i.e. the chain $ \{x_k\}_k \subset \hs$, defined in \eqref{chainxcomponents}.  We act analogously to what we have done for the chain $S\kkn$. So we start by recalling the definition of the continuous interpolant $x\bn(t)$, equation \eqref{interpolant}, and we define the piecewise constant interpolant of the chain to be 
\be\label{piecewiseconstinterx}
\bar{x}\bn(t)=x_k^N \quad \mbox{for }\,\, t_k\leq t < t_{k+1}.
\ee
We also recall the notation $\Theta(x,S)$ for  the drift of equation \eqref{informalSPDe}, i.e.
\be\label{Theta}
\Theta(x, S)= F(x)\dl(S). \quad (x, S) \in \hs \times \R_+.
\ee
The drift-martingale decomposition of the chain  $x_k^N$  is as follows:
\be\label{driftmartdecompx}
 x\kpo^N=x_k^N+\frac{1}{N}\Theta^N(x_k^N)+\frac{1}{\sqrt{N}}M_{k}^{1,N}.
\ee
where $\Theta^N(x)$  is
\be\label{approximatedriftd}
\Theta^N(x_k^N):=N \EE_k \left[ x\kpo^N-x_k^N \right]
\ee
and 
\be\label{Mkn}
M_{k}^{1,N}:=\sqrt{N} \left[x\kpo^N- x_k^N - \frac{1}{N} \Theta^N(x_k^N)  \right].
\ee
Notice that $\Theta^N(x)$ is just a function of $x$; we will show (see Lemma \ref{lem:err2tozero} and \eqref{etahatN}) that the approximate drift $\Theta^N(x)$ converges to $\Theta(x,S)$, the drift of the SDE \eqref{informalSPDe}; that is, in the limit the dependence on $S$ becomes explicit (this should not surprie since, as already remarked, $S_k^N$ depends only on $x_k^N$).
Using again  \cite[Appendix A]{Ottobre2016} we 
obtain  
$$
x\bn(t)=x_k^N+\int_{t_k}^t \Theta^N(\bar{x}\bn (v)) dv+ 
\sqrt{N}(t-t_k)M_k^{1,N}, \quad \mbox{when } t_k\leq t <t\kpo
$$
and therefore, for all $t \in [0,T]$,
$$
x\bn(t)=x_0^N+\int_{0}^t \Theta^N(\bar{x}\bn (v)) dv+\frac{1}{\sqrt{N}} \sum_{j=1}^{k-1}M_j^{1,N} +\sqrt{N}(t-t_k)M_k^{1,N}.  
$$
Setting
\be\label{etaN}
\eta^N(t):=\frac{1}{\sqrt{N}} \sum_{j=0}^{k-1}M_j^{1,N} +\sqrt{N}(t-t_k)M_k^{1,N}, 
\quad \mbox{when } t_k\leq t <t\kpo, 
\ee
we can rewrite the above as
\begin{align}
x\bn(t)&=x_0^N+ \int_{0}^t \Theta^N(\bar{x}\bn (v)) dv+ \eta^N(t)\nonumber\\
&=x_0^N+\int_{0}^t \Theta(x\bn(v), S(v)) dv+\hat{\eta}^N(t) \, ,\label{contmapetahatx}
\end{align}
where, for all $t \in [0,T]$,
\begin{align}
\hat{\eta}^N(t)&:= \int_0^t \left[ \Theta^N(\bar{x}\bn (v))-\Theta(x\bn(v), S(v)) \right] dv + \eta^N(t)\nonumber \\
&= \int_0^t \left[ \Theta^N(\bar{x}\bn (v))-
\Theta(\bar{x}\bn(v), \bar{S}\bn (v)) \right] dv \nonumber\\
&+\int_0^t \left[ \Theta(\bar{x}\bn(v), \bar{S}\bn (v)) - \Theta({x}\bn(v), {S}\bn (v)) \right] dv \nonumber\\
&+\int_0^t \left[ \Theta ({x}\bn(v), {S}\bn (v))-\Theta({x}\bn(v), 
{S(v)}) \right] dv + \eta^N(t). \label{etahatN}
\end{align}
If we can prove that $\hat{\eta}^N(t)$ converges  weakly in $C([0,T]; \hs)$ to 
\be\label{limint}
\eta(t):= \int_0^t \Gl^{1/2}(S_v)dW_v,
\ee
where $W_v$ is a $\hs$-valued $\C_s$-Brownian motion, then 
\eqref{contmapetahatx} and 
the continuity of the map $\mathcal{J}_1$  allow to conclude that $x\bn(t)$ converges weakly in $C([0,T]; \hs)$ to $x(t)$,  solution of \eqref{xsolofSPDE}.
Such an   argument  is  the backbone of the proof of 
 Theorem 
\ref{thm:mainthm1}. The proof of Theorem 
\ref{thm:mainthm1} can be found in   Section \ref{sec:proofofmainthm}.

\section{Proof of Theorem \ref{thm:weak conv of Skn}}\label{sec:proof of wekconvof Skn}
\begin{proof}[Proof of Theorem \ref{thm:weak conv of Skn}]
Recall the definition of the map $\mathcal{J}_2$ given in Theorem
 \ref{contofupsilon} and observe   that thanks to \eqref{contmapwaSN}, 
 $$
 S\bn(t)=\mathcal{J}_2(S_0^N, \hat{w}^N(t)).
 $$
 Therefore proving the statement of Theorem \ref{thm:weak conv of Skn} amounts to proving that $\hat{w}^N(t)$ converges weakly to zero in 
$C([0,T]; \R)$. This is a consequence of the decomposition \eqref{whatN} together with  Lemma \ref{lem:noise2}, Lemma \ref{lem:AlN-Al} and Lemma \ref{lem:Alx-Aly} below.
\end{proof}
In the following $\EE_{x_0}$ denotes the expected value given $x_0\in \hsint$, the initial value of the chain. We recall once again that the initial value of the chain $x_k^N$ determines the initial value of the chain $S_k^N$.
\begin{lemma}\label{lem:noise2}
 Under  the assumptions of Theorem \ref{thm:weak conv of Skn},   the martingale difference array $w^N(t)$ defined in \eqref{wN} converges weakly to zero  in $C([0,T]; \R)$. 
\end{lemma}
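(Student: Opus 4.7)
The plan is to prove the stronger statement that $\sup_{t\in[0,T]}|w^N(t)|\to 0$ in $L^2$, which immediately implies weak convergence to zero in $C([0,T];\R)$. By construction $M_k^{2,N}$ is a martingale difference with respect to the filtration $\mathcal{F}_k:=\sigma(x_0,\ldots,x_k)$, so at the grid points $w^N(t_k)=\frac{1}{\sqrt N}\sum_{j=1}^{k-1}M_j^{2,N}$ is a discrete-time martingale. Since $w^N$ is piecewise linear between grid points with $|w^N(t)-w^N(t_k)|\leq |M_k^{2,N}|/\sqrt N$ on $[t_k,t_{k+1})$, one has
\[
\sup_{t\in[0,T]}|w^N(t)|\;\leq\;\max_{0\leq k\leq [NT]}|w^N(t_k)|\;+\;\max_{0\leq k\leq [NT]}\frac{|M_k^{2,N}|}{\sqrt N}.
\]
Doob's $L^2$ inequality applied to the first term, together with the crude pathwise bound $\max_k|M_k^{2,N}|^2\leq\sum_k|M_k^{2,N}|^2$ for the second, reduces the claim to showing $\frac{1}{N}\sum_{j=0}^{[NT]-1}\EE_{x_0}(M_j^{2,N})^2\to 0$ as $N\to\infty$.

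The main technical step is the pointwise estimate $\EE_k(M_k^{2,N})^2\les (1+S_k^N)/N$. From the second equality in \eqref{useforsk-skp1},
\[
N(S\kpo^N-S_k^N)\;=\;\gamma\kpo\,U_k,\qquad U_k:=\frac{2\ell^2}{N}\sum_{i=1}^N|\xi\kpo\inn|^2+2\sqrt{\tfrac{2\ell^2}{N}}\sum_{i=1}^N\frac{x\kn\,\xi\kpo\inn}{\lambda_i}.
\]
Since $\gamma\kpo\in\{0,1\}$, it suffices to bound $\EE_k U_k^2$. A direct Gaussian computation gives $\EE_k(\frac{1}{N}\sum_i|\xi\kpo\inn|^2)^2=1+O(1/N)$ and $\EE_k(\sum_i\frac{x\kn\,\xi\kpo\inn}{\lambda_i})^2=NS_k^N$; the cross-term has zero mean by oddness in the Gaussian. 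Hence $\EE_k U_k^2\les 1+S_k^N$, which gives $N^2\EE_k(S\kpo^N-S_k^N)^2\les 1+S_k^N$. Subtracting the centering $\al^N(S_k^N)/N$, whose square is controlled by the same $U_k^2$ bound via Jensen, yields the estimate $\EE_k(M_k^{2,N})^2\les (1+S_k^N)/N$.

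Taking expectation and summing,
\[
\frac{1}{N}\sum_{j=0}^{[NT]-1}\EE_{x_0}(M_j^{2,N})^2\;\les\;\frac{1}{N^2}\sum_{j=0}^{[NT]-1}\bigl(1+\EE_{x_0}S_j^N\bigr),
\]
which is of order $1/N$ as soon as $\sup_{N}\sup_{0\leq j\leq[NT]}\EE_{x_0}S_j^N<\infty$. This uniform moment bound on $S_j^N$ is, I expect, the content of a separate preliminary lemma (compatible with the $S_k^N$--boundedness remark at the end of Section \ref{subsec:derivofeqnforst}): it can be obtained by a discrete Gronwall argument on the recursion $\EE_{x_0}S\kpo^N=\EE_{x_0}S_k^N+\frac{1}{N}\EE_{x_0}\al^N(S_k^N)$, exploiting the sign pattern of $\al$ from Lemma \ref{lem:propofDandGamma} (positive on $[0,1)$, negative on $(1,\infty)$) to prevent $S_k^N$ from running off to infinity. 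The main obstacle relative to the stationary analysis of \cite{Matt:Pill:Stu:11} lies here: in stationarity $\EE S_k^N\to 1$ is essentially automatic, whereas out of stationarity one must first confirm this discrete stability before the martingale estimate can be closed. Once this bound is in hand, the two previous paragraphs deliver $\EE\sup_{t\in[0,T]}|w^N(t)|^2=O(1/N)$, which concludes the proof.
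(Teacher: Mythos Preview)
Your proof is correct and follows essentially the same route as the paper: both reduce to showing $\frac{1}{N}\sum_{j=0}^{[NT]}\EE_{x_0}|M_j^{2,N}|^2\to 0$, obtain the key bound $\EE_{x_0}|M_k^{2,N}|^2\lesssim 1/N$ from the second-moment estimate on the increment $S_{k+1}^N-S_k^N$, and close via the uniform moment bound on $S_k^N$ (which is indeed a separate preliminary lemma in the paper, Lemma~\ref{lem:bound1}). Two minor differences: the paper invokes the martingale CLT with zero limiting variance rather than your Doob argument (your route is more direct here, since the limit is deterministic), and the paper identifies the increment as $\gamma_{k+1}(-2R)/N$ and cites the bound $\EE_k|R|^2<\infty$ from \eqref{ekrfinite}, whereas you compute the Gaussian moments of $U_k$ by hand; the resulting estimates are the same. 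Your sketch of the moment bound on $S_k^N$ via the sign of $A_\ell$ is not how the paper proceeds (it uses a direct inductive moment computation), but you correctly anticipated its role.
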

\begin{lemma}\label{lem:AlN-Al}
 Under  the assumptions of Theorem \ref{thm:weak conv of Skn},
\be\label{err2s}
 \EE_{x_0}\int_0^T \lv \al^N(\bar{x}\bn (v))-\al(\bar{S}\bn(v)) \rv^2 \, dv  \lra 0 \quad \mbox{as} \quad N \ra \infty.
\ee
\end{lemma}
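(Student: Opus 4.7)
The plan is to derive, uniformly in $k \le NT$, the pointwise $L^2$ bound
\begin{equation*}
\EE_{x_0} \bigl|\al^N(x_k^N) - \al(S_k^N)\bigr|^2 \;\les\; N^{-1}\bigl(1 + \EE_{x_0}\|x_k^N\|_s^2 + \EE_{x_0} S_k^N\bigr),
\end{equation*}
and then integrate over $[0,T]$. In accordance with \eqref{approximatedrifts} I treat $\al^N$ as a function of the underlying $\hs$-chain $x_k^N$, reading $\al^N(\bar S^N(v))$ in \eqref{err2s} as $\al^N(x_k^N)$ on $[t_k,t_{k+1})$. The argument proceeds through the three approximations already identified heuristically in Section \ref{subsec:derivofeqnforst}.

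\emph{Step 1 (discard the $r^N$ correction).} From \eqref{useinlemma}, $\al^N(x_k^N) = \EE_k[(1 \wedge e^{Q})(-2R)]$. Combining $Q = R + r^N$ with $|r^N| \les N^{-1}$ from \eqref{exprn} and the $1$-Lipschitz property of $z \mapsto 1 \wedge e^z$,
\begin{equation*}
\bigl| \al^N(x_k^N) - \EE_k[(1 \wedge e^R)(-2R)]\bigr| \;\les\; N^{-1} \EE_k|R| \;\les\; N^{-1}\bigl(1 + \sqrt{S_k^N} + N^{-1/2}\|x_k^N\|_s\bigr),
\end{equation*}
the last inequality using $\EE_k R^2 = \ell^4 + \tfrac{2\ell^2}{N}\|\zeta_k^N\|^2 + O(N^{-1})$ together with $\tfrac{1}{N}\|\zeta_k^N\|^2 \les S_k^N + N^{-1}(1+\|x_k^N\|_s^2)$.

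\emph{Step 2 (Gaussian comparison).} Let $Z_{\ell,k} \sim \cN(-\ell^2, 2\ell^2 S_k^N)$ as in \eqref{approx4}. Since $\zeta_k^{j,N} = x_k^{j,N}/\lambda_j + \lambda_j(\nabla \Psi^N(x_k))^j$,
\begin{equation*}
R - Z_{\ell,k} = -\frac{\ell^2}{N}\sum_{j=1}^N \bigl(|\xi_{k+1}^{j,N}|^2 - 1\bigr) - \sqrt{\frac{2\ell^2}{N}}\sum_{j=1}^N \lambda_j (\nabla\Psi^N(x_k))^j\, \xi_{k+1}^{j,N}.
\end{equation*}
The cross term vanishes by oddness of the standard normal, so
\begin{equation*}
\EE_k |R - Z_{\ell,k}|^2 \;\les\; N^{-1} + \tfrac{2\ell^2}{N}\|\C^{1/2}\nabla \Psi^N(x_k)\|^2 \;\les\; N^{-1}(1 + \|x_k^N\|_s^2),
\end{equation*}
using $\|\C^{1/2}\nabla\Psi\| \le \|\nabla\Psi\|_{-s}$ (valid because $\lambda_j^2 \les j^{-2s}$ by Assumption \ref{ass:1}) combined with \eqref{der-s}. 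The map $f(z) := -2z(1 \wedge e^z)$ has bounded derivative ($-2$ for $z\ge 0$, and $-2e^z(1+z)$ for $z<0$), hence is globally Lipschitz, so by Jensen
\begin{equation*}
\bigl| \EE_k[(1 \wedge e^R)(-2R)] - \EE_k[(1 \wedge e^{Z_{\ell,k}})(-2Z_{\ell,k})]\bigr| \;\les\; N^{-1/2}\bigl(1 + \|x_k^N\|_s\bigr).
\end{equation*}

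\emph{Step 3 (closed form).} Conditionally on $x_k$, $Z_{\ell,k}$ is exactly $\cN(-\ell^2, 2\ell^2 S_k^N)$, so the identity \eqref{useinlemma1} gives $\EE_k[(1 \wedge e^{Z_{\ell,k}})(-2Z_{\ell,k})] = \al(S_k^N)$. Combining Steps 1--3, squaring, and taking $\EE_{x_0}$ yields the displayed pointwise bound. Since $\bar S^N(v)$ is piecewise constant on intervals of length $1/N$, integrating over $[0,T]$ gives
\begin{equation*}
\EE_{x_0}\int_0^T \bigl|\al^N(\bar S^N(v)) - \al(\bar S^N(v))\bigr|^2 dv \;\les\; N^{-1}\Bigl(T + \sup_{0 \le k \le NT} \EE_{x_0}\bigl(\|x_k^N\|_s^2 + S_k^N\bigr)\Bigr).
\end{equation*}

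The main obstacle is the uniform-in-$k$ moment bound $\sup_{k \le NT} \EE_{x_0}(\|x_k^N\|_s^2 + S_k^N) < \infty$. Out of stationarity this cannot be read off from the invariant measure as in \cite{Matt:Pill:Stu:11}, and must be supplied by a dedicated a priori estimate on the non-stationary chain (compatible with the stability of $S_k^N$ anticipated after \eqref{approxbeta}); this is precisely the step where the non-stationary setting genuinely complicates the analysis compared to the stationary case.
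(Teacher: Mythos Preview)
Your proof is correct and follows essentially the same approach as the paper: both split $\al^N - \al$ into a piece handling $Q \to R$ via the $1$-Lipschitz property of $z\mapsto 1\wedge e^z$ together with $|r^N|\les N^{-1}$, and a piece handling $R \to Z_{\ell,k}$ via the global Lipschitz property of $z\mapsto -2z(1\wedge e^z)$ combined with the coupling bound $\EE_k|R-Z_{\ell,k}|^2 \les N^{-1}(1+\|x_k\|_s^2)$. The uniform-in-$k$ moment bound you flag as the main obstacle is exactly Lemma~\ref{lem:bound1}, which the paper supplies separately.
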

\begin{lemma}\label{lem:Alx-Aly}
 Under  the assumptions of Theorem \ref{thm:weak conv of Skn},  for every fixed $T>0$, 
\be\label{lim:Alx-Aly}
\EE_{x_0}\int_0^T \lv \al (\bar{S}\bn (v))-\al(S\bn(v)) \rv^2  \,dv \lra 0 \quad \mbox{as} \quad N \ra \infty.
\ee
\end{lemma}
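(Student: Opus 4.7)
The plan is to exploit the local Lipschitz bound \eqref{alalmostlip} for $\al$ together with the observation that, on each sub-interval $[t_k, t_{k+1})$ of length $1/N$, the continuous interpolant $S\bn$ and the piecewise constant interpolant $\bar S\bn$ differ by at most one chain increment. Explicitly, for $v \in [t_k, t_{k+1})$ we have $\bar S\bn(v) = S_k^N$ and $S\bn(v) = S_k^N + (Nv-k)(S_{k+1}^N - S_k^N)$, so
\be
\lv \bar S\bn(v) - S\bn(v) \rv = (Nv-k)\lv S_{k+1}^N - S_k^N \rv \leq \lv S_{k+1}^N - S_k^N \rv.
\label{eq:interpdiff}
\ee

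First I would apply \eqref{alalmostlip} pointwise and square to obtain
\be
\lv \al(\bar S\bn(v)) - \al(S\bn(v)) \rv^2 \les (1 + \lv \bar S\bn(v) \rv^2)\,\lv S_{k+1}^N - S_k^N \rv^2,
\label{eq:loclip2}
\ee
then integrate over $[0,T]$ and split the integral into the sub-intervals $[t_k, t_{k+1})$. After the trivial $v$-integration this produces
\be
\EE_{x_0}\int_0^T \lv \al(\bar S\bn(v)) - \al(S\bn(v)) \rv^2 dv \les \frac{1}{N}\sum_{k=0}^{[NT]} \EE_{x_0}\left[(1 + \lv S_k^N \rv^2)\lv S_{k+1}^N - S_k^N \rv^2\right].
\label{eq:sumbound}
\ee

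Next I would invoke the drift-martingale decomposition
$$
S_{k+1}^N - S_k^N = \frac{1}{N}\al^N(S_k^N) + \frac{1}{\sqrt{N}} M_k^{2,N},
$$
together with the uniform-in-$(k,N)$ moment bounds on $S_k^N$, $\al^N(S_k^N)$ and $\EE_k \lv M_k^{2,N} \rv^2$ (these are the contents of the uniform-boundedness estimates announced after \eqref{useinlemma1}, in the sense of Lemma \ref{lem:bound1}, and the basic $L^2$ control on the martingale difference $M_k^{2,N}$). Conditioning on $x_k$ and using Cauchy-Schwarz one finds $\EE_{x_0}\left[(1 + \lv S_k^N \rv^2)\lv S_{k+1}^N - S_k^N \rv^2\right] \les 1/N$, so that the right-hand side of \eqref{eq:sumbound} is $\les \frac{1}{N}\cdot(NT)\cdot \frac{1}{N} = O(1/N)$, yielding \eqref{lim:Alx-Aly}.

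The main technical point, and the only step that is not essentially a one-line computation, is securing the a priori bound $\EE_k \lv M_k^{2,N} \rv^2 \les 1$ and $\EE_{x_0}\lv S_k^N \rv^2 \les 1$ uniformly in $k \leq [NT]$ and $N$. These are not immediate in the non-stationary regime (in \cite{Matt:Pill:Stu:11} the analogous bounds reduce to Gaussian computations via $\pi_0$), but they follow from the stability built into the ODE \eqref{ODE}: the sign properties of $\al$ listed in Lemma \ref{lem:propofDandGamma} force $S_k^N$ not to grow beyond $\max\{S_0^N,1\}$ up to martingale fluctuations of size $O(1/\sqrt{N})$, and a Gronwall-type discrete estimate propagates control of the second moment, exactly as in the uniform bound alluded to in Lemma \ref{lem:bound1}.
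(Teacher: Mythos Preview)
Your argument is correct and follows essentially the same strategy as the paper: apply the local Lipschitz bound \eqref{alalmostlip}, reduce to controlling $\EE_{x_0}\bigl[(1+\lv S_k^N\rv^2)\lv S_{k+1}^N-S_k^N\rv^2\bigr]$ uniformly in $k\leq[NT]$, and then use Lemma~\ref{lem:bound1}. The only difference is in how the increment is handled. The paper bounds the fourth moment directly from the explicit formula $S_{k+1}^N-S_k^N=\gamma_{k+1}(-2R)/N$ (see \eqref{useforsk-skp1}), obtaining $\exo\lv S_{k+1}^N-S_k^N\rv^4\les N^{-3}$ and hence, via Cauchy--Schwarz, $\exo\lv \al(\bar S\bn)-\al(S\bn)\rv^2\les N^{-3/2}$. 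Your route through the drift--martingale decomposition is a legitimate alternative and yields the slightly weaker (but still sufficient) rate $O(1/N)$; in fact the same explicit formula gives $\EE_k\lv S_{k+1}^N-S_k^N\rv^2\les N^{-2}\EE_k R^2$ with $\EE_k R^2$ controlled as in \eqref{ekR}--\eqref{ekrfinite}, so your claim $\exo\bigl[(1+\lv S_k^N\rv^2)\lv S_{k+1}^N-S_k^N\rv^2\bigr]\les 1/N$ follows without ever needing separate a~priori control of $\al^N$ or $M_k^{2,N}$.

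One small correction to your last paragraph: the uniform moment bounds in Lemma~\ref{lem:bound1} are \emph{not} obtained via the sign properties of $\al$ and a discrete Gronwall argument as you suggest. They are proved in Appendix~B by a crude induction that simply ignores the accept/reject step (if rejected the bound is trivial, if accepted one expands the square and iterates), which gives $\exo S_{k+1}^N\leq\exo S_k^N+2\ell^2/N$ and similarly for higher moments. The stability of the limiting ODE plays no role there.
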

Before proving the above lemmata, we state  Lemma \ref{lem:bound1},  which we will repeatedly use throughout this section and the next.  The proof of Lemma \ref{lem:noise2} can be found in  Section \ref{sec:annoises},  the proof of Lemma \ref{lem:AlN-Al} and Lemma \ref{lem:Alx-Aly} is the content of Section \ref{sec:andrifts}. 

\begin{lemma}\label{lem:bound1}
 Let the assumptions of Theorem \ref{thm:weak conv of Skn}  hold. Then for every $m\geq 0$ there exists a constant $\bar{c}=\bar{c}(m)$ such that
\be\label{boundonmomentsofx}
\EE_{x_0} \nors{x_k^N}^{m}  < \bar{c}\, ,
\ee
\be\label{boundonmomentsofsk}
 \EE_{x_0} (S_k^N)^m < \bar{c}\, 
\ee
and 
\be\label{lembound2ebounded}  
 \exo e^{\frac{c}{N} \|\zeta_k^N\|^2}< \bar{c}  \quad \mbox{for all }\, c>0. 
\ee
We recall that $\zeta\kkn$  has been defined in \eqref{defzetakn}. The constant $\bar{c}= \bar{c}(m)$ in the above bounds is independent of $N \in \N$ and of $0 \leq k \leq [TN]+1$ (but it depends on $m$).  
 \end{lemma}
\begin{proof}
See Appendix B.
\end{proof}
It is not trivial to prove Lemma \ref{lem:bound1} in the non-stationary regime that we are interested in. We make some more detailed remarks on this point in Remark \ref{rem:differ1}. 
\begin{lemma}\label{corollarylemma}Under the assumptions of Theorem \ref{thm:weak conv of Skn}, 
$$
\bar{S}^{(N)}(t) \rightarrow S(t) \quad \mbox{ in }L^p(\Omega), 
\mbox{ for every fixed } t>0 \mbox{ and any }p>0. 
$$ 
Moreover, 
$$
\exo \int_0^T \lv \bar{S}^{(N)}(t) - S(t) \rv^p dt \longrightarrow 0  \quad \mbox{ as } N \ra \infty, \mbox{ for all } p>0
$$
and 
$$
\exo \int_0^T \lv {S}^{(N)}(t) - S(t) \rv^p dt \longrightarrow 0  \quad \mbox{ as } N \ra \infty, \mbox{ for all } p>0\,.
$$
\end{lemma}
\begin{proof} Using Vitali's convergence theorem, the first statement  is a corollary of \eqref{boundonmomentsofsk} and
 Lemma \ref{lem:asconv}  (indeed $\bar{S}^{(N)}(t) \leq S_k^N+S_{k+1}^{N}$ and the right hand side has bounded moments of any order, so the sequence $\bar{S}^{(N)}(t)$ is uniformly integrable). As for the second statement, it can be obtained from the first by using again the bounded convergence theorem applied to the (deterministic) sequence $\exo \lv \bar{S}^{(N)}(t) - S(t) \rv^p$. Indeed such a sequence tends to zero and is bounded by a multiple of the function $\exo\left[ \lv \bar{S}^{(N)}(t)\lv^p + \rv S(t) \rv^p\right]$, which is bounded again thanks to \eqref{boundonmomentsofsk}. The last statement is obtained similarly and we don't detail the argument. 
This concludes the proof of the lemma.
\end{proof}

\subsection{Analysis of the Drift}\label{sec:andrifts}
Before starting the proof of Lemma \ref{lem:AlN-Al} we observe that because $\sum_{j=1}^N (\xi\kpo\jnn)^2$ has a Chi-squared distribution with $N$ degrees of freedom,  the following bound holds:
\be\label{stirling}
\EE\left[ \sum_{j=1}^N (\xi\kpo\jnn)^2\right]^m = 2^m \frac{\Gamma(m+N/2)}{\Gamma(N/2)}\les N^m,
\ee
by Stirling's formula for the Gamma function $\Gamma$.

\begin{proof}[Proof of Lemma \ref{lem:AlN-Al}]
Set 
\begin{align}
E_k^N&:=\al^N (x_k^N)- \al (S_k^N)\, \label{defEn}.
\end{align}
Then, recalling that for any $b \in \R_+$ we set $[b]=n$ if $n \leq b < n+1$ for some integer $n$, 
\begin{align}
\EE_{x_0} \int_0^T  \lv \al^N(\bar{x}\bn (v))-\al(\bar{S}\bn(v)) \rv^2 \, dv
&=  \EE_{x_0} \frac{1}{N} \sum_{k=0}^{[TN]} \lv E_k^N\rv^2  
\label{driftbigs}\\
&+   \left(T-\frac{[TN]}{N}\right) \,  \EE_{x_0} \lv E_{[TN]}^N\rv^2  .\label{driftsmalls}
\end{align}
From the above equality and observing that $\lv T-\frac{[TN]}{N} \rv < 1/N$, it is clear that  in order to show the limit \eqref{err2s} it is sufficient to prove that
$$
\EE_{x_0}\lv E_k^N \rv^2 \stackrel{N\ra \infty}{\lra} 0\,,
\quad \mbox{uniformly over }\,\, 0\leq k\leq [NT].
$$
To this end, we  write $\al(S\kkn)=\EE_k[(1 \wedge e^{Z_{\ell,k}})(-2 Z_{\ell,k})]$ (which follows from \eqref{useinlemma1})
and use \eqref{useinlemma}  and \eqref{approximatedrifts}, obtaining
 \begin{align}\label{decompee}
 E\kkn=\EE_k\left[(1 \wedge e^Q)(-2R) \right]-\al(S\kkn)+ \EE_k\hat{r}^N=E_{1,k}^N+E_{2,k}^N+\EE_k\hat{r}^N,
 \end{align}
 where
 \begin{align}
  E_{1,k}^N  & := \EE_k
 \left[((1 \wedge e^Q)-(1 \wedge e^R))(-2R) 
 \right],   \nonumber
 \end{align} 
 \begin{align*}
 E_{2,k}^N  & :=  \EE_k \left[ (1 \wedge e^R)(-2R) - (1 \wedge e^{Z_{\ell,k}})(-2 Z_{\ell,k}) \right] ,
\end{align*}
and $\hat{r}^N$ is defined in \eqref{rhatNdef}.
Observe that from \eqref{Q=R+rN} and \eqref{stirling} we have 
\be\label{ekR}
\EE_k\lv R\rv^{2p}
\les 1+\frac{\|\zeta_k^N\|^{2p}}{N^p}, \qquad p\geq 1, 
\ee
as, given $x_k$, the sum $\sum_{i=1}^N \zeta_k^{i,N} \xi_k^{i,N}$ is Gaussian with mean zero and variance $\|\zeta_k^N\|^2.$
 From the definition of $\zeta_k^N$, equation (\ref{defzetakn}),  we have
$$
\frac{\|\zeta_k^N\|^2}{N}\les \frac{1}{N}\sum_{i=1}^N \frac{\lv x\kn \rv^2}
{\lambda_i^2} +\frac{1}{N} \|\C^{1/2}\nabla \Psi^N(x_k^N)\|^2 = S\kkn+ \frac{1}{N} \|\C^{1/2}\nabla \Psi^N (x_k^N)\|^2.
$$
By acting as in \cite[page 915]{Matt:Pill:Stu:11} we obtain
\begin{align}
\|\C^{1/2}\nabla\Psi^N(x)\| &\les \|x\|_s \vee \|x\|_s^{\varsigma}\label{c12dpsi111}\\
& \les (1+ \nors{x}) \label{c12dpsi},
\end{align}
hence
\be\label{genp}
\frac{\|\zeta_k^N\|^{2p}}{N^p}\les (S_k^N)^p +\frac{1}{N^p} (1+ \nors{x_k^N}^{2p}), \quad p\geq 1.
\ee
Combining \eqref{ekR} and \eqref{genp} then gives
\be\label{rknp}
\EE_k \lv R\rv^{2p} \les 1+
(S_k^N)^p +\frac{1}{N^p} (1+ \nors{x_k^N}^{2p}), \quad p\geq 1.
\ee
Therefore, using \eqref{rknp} (with $p=1$), \eqref{boundonmomentsofx}  and (\ref{boundonmomentsofsk}), we obtain
\be\label{ekrfinite}
\exo \lv R\rv^2=\exo \EE_k\lv R\rv^2 \les 1+\frac{\exo \|\zeta_k^N\|^2}{N}\les 1+\exo S_k^N+\frac{\exo \nors{x_k^N}^2}{N} < \infty.
\ee
Using the Lipshitzianity of the function $1\wedge e^x$ and \eqref{decompQvera},   we have
\begin{align}\label{uselater}
\lv E_{1,k}^N  \rv & \les \EE_k \lv r^N R\rv 
\leq \left(\EE_k \lv r^N\rv^2 \right)^{1/2} \left(\EE_k(R)^2 \right)^{1/2}\,.
 \end{align}
By \eqref{ekrfinite} and \eqref{exprn} we then conclude 
\be\label{EEE}
 \exo\lv E_{1,k}^N \rv^2 \leq \frac{1}{N^2}\exo\left( S_k^N+ \frac{\nors{x_k^N}^2}{N}\right)\lra 0, 
\ee
thanks to Lemma \ref{lem:bound1}. 
As for the term $E_{2,k}^N$, 
we use the lipshitzianity of the function $(1 \wedge e^x) (-2x)$  to conclude
\be \label{eek2}
\exo \lv E_{2,k}^N \rv^2 \les \exo \EE_k \lv R- Z_{\ell,k}\rv^2 \stackrel{\eqref{hmrz}}{\les} \frac{1+\exo\nors{x_k}^2}{N} \lra 0.
\ee
  Finally, to estimate $\hat{r}^N$ (defined in \eqref{rhatNdef}), we use    the independence, given $x_k$, of $\Psi(x_k)$ from $\xi_{k+1}$:
\begin{align}
\lv  \EE_k\hat{r}^N\rv^{p} & \leq \left(\EE_k\lv \hat{r}^N\rv^{2}\right)^{p/2} \nonumber \\
& \leq \frac{1}{{N}^{p/2} } \left(  \EE_k \lv \sum_{j=1}^N 
\left[\C_N^{1/2} \nabla \Psi \right]^j \xi\kpo\jnn \rv ^2 \right)^{p/2} \nonumber\\
& = \frac{1}{{N}^{p/2}}   \left(   \sum_{j=1}^N 
\EE_k \lv \left[\C_N^{1/2} \nabla \Psi \right]^j \rv^2  \lv \xi\kpo\jnn \rv ^2 \right)^{p/2}  \nonumber\\
&=  \frac{1}{N^{p/2}} \| \C^{1/2} \nabla \Psi\|^{p}, \label{MM}
\end{align}
where in the above $\left[\C_N^{1/2} \nabla \Psi \right]^j$ denotes the $j$-th component of 
$\C_N^{1/2} \nabla \Psi $. 
Using \eqref{c12dpsi} we have 
\be\label{estrhat}
\lv  \EE_k\hat{r}^N\rv^{p} \leq \left(\EE_k\lv \hat{r}^N\rv^{2}\right)^{p/2} \leq \frac{1+\nors{x_k^N}^p}{N^{p/2}} \quad \mbox{for all } p \geq 1.   
\ee
Hence, \eqref{boundonmomentsofx} gives
\be\label{estrhat1}
\exo \lv \EE_k  \hat{r}^N\rv^{p} \leq \exo \left(\EE_k\lv \hat{r}^N\rv^{2}\right)^{p/2}\leq \frac{1}{N^{p/2}} \quad \mbox{for all } p \geq 1.   
\ee

 This  concludes the proof.
\end{proof}

\begin{proof}[Proof of Lemma \ref{lem:Alx-Aly}] By the Lipshitzianity of $\al$,
$$
\lv \al (\bar{S}\bn(v))  - \al (S\bn(v)) \rv^2 \les  \lv \bar{S}\bn(v) -S\bn(v) \rv^2.
$$
The statement is now a consequence of \eqref{starrina}.

\end{proof}

\subsection{Analysis of the Noise}\label{sec:annoises}
\begin{proof}[Proof of Lemma \ref{lem:noise2}] By the martingale central limit theorem, all we need to prove is that
$$
\frac{1}{N}\sum_{j=1}^{[TN]}\exo \lv  M_j^{2,N} \rv^2 \ra 0 \quad \mbox{as } N \ra \infty.
$$
From the definition of $M_j^{2,N}$,  equation (\ref{m2kn}), 
\begin{align*}
\exo\frac{\lv  M_j^{2,N} \rv^2}{N}& =\exo\lv S\kpo^N-S\kkn-\EE_k[S\kpo^N-S\kkn]\rv^2\\
& \les \exo\lv S\kpo^N-S\kkn \rv^2. 
\end{align*}
With the same calculation as in \eqref{useinlemma}, 
\be\label{red}
S\kpo^N-S\kkn =\frac{\gamma\kpo(-2R)}{N} + \frac{1}{N}\hat{r}^N.
\ee
Therefore, using (\ref{ekrfinite}), \eqref{estrhat1} and $\gamma\kpo\leq 1$, 
\be\label{diffs1s}
\exo\lv S\kpo^N-S\kkn\rv^2 \les \frac{1}{N^2} \exo \EE_k\lv R\rv^2 +  \frac{1}{N^3}\les \frac{1}{N^2}.
\ee
The above implies the bound
$$
\exo\frac{\lv  M_j^{2,N} \rv^2}{N} \les \frac{1}{N^2}.
$$
We can therefore  conclude
$$
\frac{1}{N}\sum_{j=1}^{[TN]}\exo \lv  M_j^{2,N} \rv^2\les \frac{T}{N} \lra 0.
$$
\end{proof}

\section{Proof of Theorem \ref{thm:mainthm1}} \label{sec:proofofmainthm}
Before starting the proof of Theorem \ref{thm:mainthm1}, we state Lemma \ref{lem:Wassbound} below.
We recall the definition of Wasserstain distance between two random variables $X$ and $Y$:
\be\label{def:wassdist}
Wass(X,Y) := \sup_{f \in \mbox{Lip}_1}\EE(f(X)-f(Y))\, ,
\ee
where Lip$_1$ denotes the class of Lipshitz  functions with Lipshitz constant equal to one.  
Notice that from the definition, 
\be\label{wasspractical}
Wass(X,Y)\leq \EE\lv X-Y\rv \, .
\ee
In the next Lemma \eqref{lem:Wassbound} we refer to the Wasserstein distance relative to the marginal $\EE_k$. 
\begin{lemma}\label{lem:Wassbound}
Let the assumptions of Theorem \ref{thm:weak conv of Skn}  hold. Recalling the definitions of $R$, $R^i$,
$G$ and $Z_{\ell,k}$, \eqref{Q=R+rN}, \eqref{defRi}, \eqref{defGi} and \eqref{approx4} respectively, we have
\be\label{wassrir}
Wass(R, R^i) \leq \EE_k \lv R-R^i \rv \les\frac{1+ \lv \zeta\kn \rv}{\sqrt{N}},
\ee
\be\label{wassrigi}
Wass(R, G) \leq \EE_k \lv R-G \rv \les\frac{1}{\sqrt{N}}
\ee
and
\be\label{wassrzl}
Wass(G, Z_{\ell,k}) \leq \EE_k \lv G-Z_{\ell,k} \rv \les \frac{{1+\nors{x_k}}}{\sqrt{N}}.
\ee
Therefore, 
\be\label{wassrzlk}
Wass(R, Z_{\ell,k})\leq \EE_k\lv R-Z_{\ell,k}\rv \les \frac{{1+\nors{x_k}}}{\sqrt{N}}.
\ee
\end{lemma}
\begin{proof}
See Appendix B.
\end{proof}

\begin{proof}[Proof of Theorem \ref{thm:mainthm1}]
If $\mathcal{J}_1$ is the map defined in Theorem \ref{contofupsilon}, then \eqref{contmapetahatx} means that
$$
x\bn(t)=\mathcal{J}_1(x_0^N,\hat{\eta}^N(t)).
$$
From the continuity of $\mathcal{J}_1$,  in order to prove that $x\bn(t)\stackrel{d}{\lra}x(t)$, we just need to prove that 
$\hat{\eta}^N(t)\stackrel{d}{\lra} \eta(t)$, where $\eta(t)$ is the stochastic integral defined in \eqref{limint}.  The weak convergence $\hat{\eta}^N(t)\stackrel{d}{\lra} \eta(t)$ follows from Lemma \ref{lem:noise1},   Lemma \ref{lem:err2tozero},  Lemma \ref{lem:err3tozero}, Lemma \ref{lastbit} and the decomposition \eqref{etahatN}. 
\end{proof}

\begin{lemma}\label{lem:noise1}
 Let the assumptions of Theorem \ref{thm:mainthm1} hold. Then the interpolated martingale difference array $\mathfrak{\eta}^N(t)$ defined in \eqref{etaN} converges weakly in $C([0,T]; \hs)$ to the stochastic integral  $\eta(t)$, equation \eqref{limint}. 
\end{lemma}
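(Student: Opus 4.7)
The plan is to apply a martingale central limit theorem for $\hs$-valued martingale difference arrays to the triangular array $\{M_k^{1,N}/\sqrt{N}\}_{0\le k\le [TN]}$, and then combine it with the weak convergence $S\bn \stackrel{d}{\lra} S$ established in Theorem \ref{thm:weak conv of Skn} in order to identify the random variance in the limit. Since the increments $M_k^{1,N}/\sqrt{N}$ are conditionally centered given $\mathcal{F}_k:=\sigma(x_0,\xi_1,\gamma_1,\dots,\xi_k,\gamma_k)$, the interpolant $\eta^N(t)$ is a continuous $\hs$-valued martingale (in $t$) plus a negligible piecewise-linear correction.

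The first main step is to compute the predictable quadratic variation of $\eta^N$ in $\hs$ and to show that it converges in probability, uniformly on $[0,T]$, to the deterministic operator-valued process $\int_0^t \Gl(S_v)\,\C_s\,dv$. Starting from \eqref{chainxcomponents} together with the heuristic computation that led to
\[
N\,\EE_k\bigl[(x\kpon-x\kn)(x\kpo\jnn-x\kn^{j,N})\bigr] \;\simeq\; \lambda_i\lambda_j\,\delta_{ij}\,\Gl(S_k^N),
\]
I would make this rigorous by inserting the bound \eqref{exprn} on $r^N$, the Wasserstein bound \eqref{wassrzlk} to replace $R$ by the Gaussian $Z_{\ell,k}$, and the explicit Gaussian calculation \eqref{expmin1ex}; this yields
\[
\EE_k\langle M_k^{1,N},\hat{\phi}_i\rangle_s\langle M_k^{1,N},\hat{\phi}_j\rangle_s
=\lambda_i^2\,i^{2s}\,\delta_{ij}\,\Gl(S_k^N)+\varepsilon_k^{i,j,N},
\]
where the error satisfies $\exo|\varepsilon_k^{i,j,N}|\les \lambda_i\lambda_j\,i^{2s}(1+\exo\nors{x_k}^2)^{1/2}/\sqrt{N}$. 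Summing over $k\le [tN]$ gives a predictable quadratic variation that approximates $\int_0^t \Gl(\bar S\bn(v))\,\C_s\,dv$, with an error that is $o(1)$ in probability, using Lemma \ref{lem:bound1} and the summability $\sum_j \lambda_j^2 j^{2s}<\infty$ from Assumption \ref{ass:1}. I would then use Theorem \ref{thm:weak conv of Skn} and the Lipschitz continuity and boundedness of $\Gl$ (Lemma \ref{lem:propofDandGamma}), via Skorokhod representation applied jointly to $(x\bn,S\bn)$ on a common probability space, to replace $\bar S\bn(v)$ by $S(v)$ in the limit.

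The second step is a Lindeberg/negligibility-type bound on individual increments. It is enough to show $\sum_{k=0}^{[TN]} \exo \nors{M_k^{1,N}/\sqrt N}^{4}\to 0$, which reduces to
\[
\frac{1}{N^2}\sum_{k=0}^{[TN]}\exo \nors{x\kpo^N-x_k^N}^{4}\;\to\;0,
\]
and this follows by the same type of estimate used in \eqref{otherbit}, together with the moment bounds \eqref{boundonmomentsofx}--\eqref{boundonmomentsofsk} and the trace-class property $\tr_{\hs}\C_s<\infty$. Tightness of $\eta^N$ in $C([0,T];\hs)$ then follows from the convergence of the quadratic variation plus this Lindeberg bound by a standard Aldous/Kolmogorov criterion in Hilbert space, exactly as in \cite{Matt:Pill:Stu:11}. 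Putting these ingredients into an $\hs$-valued martingale invariance principle, the limit of $\eta^N$ is a Gaussian $\hs$-valued martingale with quadratic variation $\int_0^t \Gl(S_v)\,\C_s\,dv$, which by Lévy's characterisation coincides in law with $\eta(t)=\int_0^t \Gl^{1/2}(S_v)\,dW_v$.

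The main obstacle I expect is the quadratic-variation computation: in \cite{Matt:Pill:Stu:11} the stationarity of the chain forced $S_k^N\to 1$ and the variance operator in the limit was the constant $h_\ell\C_s$, while here the variance is the \emph{random, time-dependent} operator $\Gl(S_v)\C_s$. Handling this requires (i) quantifying the approximation $Q\simeq R\simeq Z_{\ell,k}\sim \cN(-\ell^2,2\ell^2 S_k^N)$ uniformly in $k\le [TN]$ and in $N$, which is precisely what Lemma \ref{lem:Wassbound} and the moment bound \eqref{boundonmomentsofsk} are designed for, and (ii) justifying the substitution $\Gl(S_k^N)\rightsquigarrow \Gl(S(v))$ in the limit, which is where Theorem \ref{thm:weak conv of Skn} combined with a joint Skorokhod embedding enters. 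Once both are in place, the martingale CLT delivers the claimed weak convergence in $C([0,T];\hs)$.
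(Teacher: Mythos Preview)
Your overall strategy is essentially the one used in the paper: apply an $\hs$-valued martingale CLT (this is Lemma~\ref{lem:finidimdistr+tightness}) to the array $\{M_k^{1,N}/\sqrt{N}\}$, verify convergence of the predictable quadratic variation to $\tr_{\hs}(\C_s)\int_0^T \Gl(S(v))\,dv$, check that off-diagonal entries vanish, and establish the Lindeberg condition via a fourth-moment bound. The paper carries out exactly these three steps, with the quadratic-variation computation organized through the decomposition $a_{1,k}^N$, $a_{2,k}^N$ in \eqref{a1kn}--\eqref{a2kn}.

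There is, however, one conceptual slip in your write-up. You describe the limiting variance operator $\Gl(S_v)\,\C_s$ as \emph{random} and propose a joint Skorokhod embedding of $(x\bn,S\bn)$ to handle the substitution $\Gl(S_k^N)\rightsquigarrow \Gl(S(v))$. But $S(\cdot)$ is the solution of the deterministic ODE \eqref{ODE}; it is not random. Consequently, weak convergence $S\bn\stackrel{d}{\lra}S$ in $C([0,T];\R)$ (Theorem~\ref{thm:weak conv of Skn}) is automatically convergence in probability, and since $\Gl$ is bounded and Lipschitz (Lemma~\ref{lem:propofDandGamma}), the substitution $\frac{1}{N}\sum_{k\le [TN]}\Gl(S_k^N)\to \int_0^T\Gl(S(v))\,dv$ follows directly, without any Skorokhod machinery or joint coupling with $x\bn$. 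This also means you can invoke the martingale CLT in the form stated in Lemma~\ref{lem:finidimdistr+tightness}, whose condition~\textbf{i)} assumes a \emph{deterministic} limit for the trace of the quadratic variation; had the limit genuinely been random you would have needed a stronger version of the CLT. In short: your plan works, but the ``main obstacle'' you anticipate does not exist, and the argument is simpler than you suggest.
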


\begin{lemma} \label{lem:err2tozero}
Let the assumptions of Theorem \ref{thm:mainthm1} hold. Then for every fixed $T>0$, 
\be\label{err2x}
 \EE_{x_0}\int_0^T \nors{ \Theta^N(\bar{x}\bn (v))-\Theta(\bar{x}\bn(v), \bar{S}\bn (v))}^2  \, dv  \lra 0 \quad \mbox{as} \quad N \ra \infty.
\ee
\end{lemma}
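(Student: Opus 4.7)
The plan is to mirror the strategy of Lemma \ref{lem:AlN-Al}, lifting its scalar estimate to the vector-valued $\hs$-setting. First, exploiting the piecewise constancy of $\bar{x}\bn$ and $\bar{S}\bn$ on intervals $[t_k,t_{k+1})$, the integral in \eqref{err2x} splits into $[TN]+1$ pieces of length $\leq 1/N$, so it suffices to show
\be\label{eq:plan_reduction}
\sup_{0\leq k\leq [NT]}\EE_{x_0}\nors{\Theta^N(x_k)-\Theta(x_k,S_k^N)}^2\;\longrightarrow\;0 \qquad(N\to\infty).
\ee

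Second, I would split the $\hs$-norm into a tail sum over $i>N$ and a low-frequency sum over $i\leq N$. By \eqref{chainxcomponents} the chain does not move the coordinates $i>N$, so $\Theta^N(x_k)^i=0$ for those indices, while $\Theta(x_k,S_k^N)^i=F(x_k)^i\dl(S_k^N)$ with $F(x_k)\in\hs$ (Lemma \ref{lem:lipschitz+taylor}) and $\dl$ bounded (Lemma \ref{lem:propofDandGamma}). The tail $\sum_{i>N}i^{2s}|F(x_k)^i|^2\dl^2(S_k^N)$ thus goes to zero in expectation, uniformly in $k$, by dominated convergence combined with the moment bound \eqref{boundonmomentsofx}.

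Third, for the in-band part I would unfold the chain of approximations of Section \ref{sec:heurdiffcoeffforchainx} and control each resulting error. Starting from
$$\Theta^N(x_k)^i\;=\;\sqrt{2N\ell^2}\,\lambda_i\,\EE_k^{\xi}\!\left[(1\wedge e^{Q_k})\,\xi_{k+1}^{i,N}\right],$$
the gap with the heuristic limit $-\lambda_i\zeta_k^{i,N}\dl(S_k^N)$ decomposes into three pieces: (E1) replacement of $Q$ by $R$, controlled by $|r^N|\les 1/N$ via \eqref{exprn}; (E2) application of Lemma \ref{lemma:zexpz} conditionally on $\xi_{-}^i$ with $a=\sqrt{2\ell^2/N}\zeta_k^{i,N}$, $b=R^i$, together with the simplifications in \eqref{approx61} (namely $e^{\ell^2|\zeta_k^{i,N}|^2/N}\simeq 1$, tidying the Gaussian CDF argument, and passing from $R^i$ to $R$ using the independence of $\xi_{k+1}^i$); and (E3) substitution of the resulting functional of $R$ by that of $Z_{\ell,k}$, controlled by the Wasserstein bound \eqref{wassrzlk} applied to the bounded Lipshitz function $z\mapsto e^z\mathbf{1}_{\{z<0\}}$. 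In each case the pointwise error is multiplied by $\sqrt{2N\ell^2}\,\lambda_i$, squared, weighted by $i^{2s}$, and summed over $i\leq N$. Using the summability $\sum_i i^{2s}\lambda_i^2=\tr_{\hs}(\C_s)<\infty$ from Assumption \ref{ass:1}, and the identity $\sum_i \lambda_i^2|\zeta_k^{i,N}|^2\les \|x_k\|^2_{\C}+\|\C^{1/2}\nabla\Psi^N(x_k)\|^2$ together with \eqref{c12dpsi} and \eqref{boundonmomentsofsk}, the resulting bounds take the form $C\,(1+\|x_k\|_s^2)^m/N$; taking expectation and invoking Lemma \ref{lem:bound1} gives the required convergence. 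A final, routine step replaces $F(x_k^N)\dl(S_k^N)$ by $F(x_k)\dl(S_k^N)$: by Lipshitzianity of $F$ (Lemma \ref{lem:lipschitz+taylor}) and the deterministic identity $\nors{x_k-x_k^N}^2=\sum_{i>N}i^{2s}(x_0^i)^2\to 0$, this difference is absorbed into the tail analysis.

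The main obstacle is step (E2): the local expansions behind \eqref{approx61} must be pushed through while keeping the error summable in $i$ once the $\lambda_i^2$ weight is factored in. The quantitative smallness of the factor $\sqrt{2\ell^2/N}\,\zeta_k^{i,N}$ entering Lemma \ref{lemma:zexpz} is only available in an $\ell^2$-averaged sense through $\tfrac{1}{N}\sum_i\lambda_i^2|\zeta_k^{i,N}|^2=\tfrac{1}{N}\|\C^{1/2}\zeta_k^N\|^2_{\C}$, whose control via \eqref{boundonmomentsofsk} and \eqref{c12dpsi} is the key ingredient for closing the estimate.
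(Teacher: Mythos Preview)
Your strategy is the paper's: split the time integral, then show $\EE_{x_0}\sum_{i}i^{2s}|e_k^{i,N}|^2\to 0$ uniformly in $k$ by tracking the chain of approximations in Section~\ref{sec:heurdiffcoeffforchainx}. The paper packages this as Lemma~\ref{lem:stimaEkn}, with a six-term decomposition $e_k^{i,N}=\sum_{h=1}^6 e_{h,k}^{i,N}$; your (E1), (E2), (E3) correspond to $e_{1,k}$--$e_{2,k}$, $e_{3,k}$--$e_{5,k}$, $e_{6,k}$ respectively. Two points in your outline are not right as stated, and they are exactly where the non-stationary analysis departs from \cite{Matt:Pill:Stu:11}.

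First, in (E3) the map $z\mapsto e^{z}\mathbf{1}_{\{z<0\}}$ is \emph{not} Lipschitz: it jumps from $1$ to $0$ at $z=0$. The Wasserstein bound \eqref{wassrzlk} therefore cannot be applied directly. The paper instead uses the inequality $|\EE g(X)-\EE g(Y)|\lesssim \sqrt{Wass(X,Y)}$ valid for this particular bounded $g$, giving only an $N^{-1/4}$ rate and hence the bound \eqref{bounde6}. After squaring and summing you get $N^{-1/2}\sum_i i^{2s}\lambda_i^2|\zeta_k^{i,N}|^2$, which still tends to zero via \eqref{lembound22}, but not with the $1/N$ you claim.

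Second, and more seriously, your assertion that the (E2) errors ``take the form $C(1+\|x_k\|_s^2)^m/N$'' and close using only \eqref{boundonmomentsofx}, \eqref{boundonmomentsofsk} and \eqref{c12dpsi} is too optimistic. The pointwise bounds coming out of the expansions \eqref{approx61} are \eqref{bounde3}--\eqref{bounde5}: they carry factors $|\zeta_k^{i,N}|^4/N$, $|\zeta_k^{i,N}|^6/N^2$, an exponential $e^{c\|\zeta_k^N\|^2/N}$, and an inverse $(\tfrac{1}{\sqrt{N}}+|R|)^{-1}$. Controlling $\sum_i i^{2s}\lambda_i^2|\zeta_k^{i,N}|^{2p}/N^{p-1}$ for $p=2,3$ is precisely the content of \eqref{lembound24}--\eqref{lembound26} in Lemma~\ref{lem:bound2}, whose proof is not just moment bounds on $x_k$ but a careful Young-inequality argument trading powers of $|x_k^{i,N}|/\lambda_i$ against powers of $N$ via $(S_k^N)^m$; the exponential moment \eqref{lembound2ebounded} and the bound \eqref{lembound2invrbounded} are separate, non-trivial ingredients. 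This is the point of Remark~\ref{rem:differ2}: in stationarity the $\zeta_k^{i,N}$ have Gaussian tails componentwise and these sums are automatic, but out of stationarity Lemma~\ref{lem:bound2} is where the real work lies. Your proposal flags (E2) as the ``main obstacle'' but does not supply the estimates that resolve it.
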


\begin{lemma} \label{lem:err3tozero}
Let the assumptions of Theorem \ref{thm:mainthm1} hold. Then for any fixed $T>0$
\be
\EE_{x_0}\int_0^T \| \Theta (\bar{x}\bn (v), 
\bar{S}\bn (v))-\Theta(x\bn(v), S\bn (v)) \|_s^2  \,dv \lra 0 \quad \mbox{as} \quad N \ra \infty.
\ee
\end{lemma}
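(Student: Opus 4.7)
The plan is to exploit the product structure $\Theta(x,S)=F(x)\dl(S)$ and split the integrand via the telescoping identity
$$
\Theta(\bar x\bn,\bar S\bn)-\Theta(x\bn,S\bn) = \bigl(F(\bar x\bn)-F(x\bn)\bigr)\dl(\bar S\bn) + F(x\bn)\bigl(\dl(\bar S\bn)-\dl(S\bn)\bigr).
$$
Because $F$ is globally Lipschitz with linear growth on $\hs$ (Lemma \ref{lem:lipschitz+taylor}) and $\dl$ is bounded and Lipschitz on $\R_+$ (Lemma \ref{lem:propofDandGamma}), this would yield the elementary bound
$$
\nors{\Theta(\bar x\bn,\bar S\bn)-\Theta(x\bn,S\bn)}^2 \;\lesssim\; \nors{\bar x\bn-x\bn}^2 + \bigl(1+\nors{x\bn}^2\bigr)\,\lv \bar S\bn-S\bn\rv^2,
$$
so the proof reduces to showing that each of the two contributions on the right, after integration in $v$ and expectation, tends to zero.

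For the $x$-contribution I would use the definitions \eqref{interpolant} and \eqref{piecewiseconstinterx}: on each interval $[t_k,t_{k+1})$ one has $x\bn(v)-\bar x\bn(v)=(Nv-k)(x\kpo-x_k)$ with $Nv-k\in[0,1)$, hence $\nors{x\bn(v)-\bar x\bn(v)}\leq \nors{x\kpo-x_k}$. The chain update \eqref{chaininh} gives $x\kpo-x_k=\gamma\kpo\sqrt{2\ell^2/N}\,\C^{1/2}\xi\kpo^N$, and since $\gamma\kpo\leq 1$ the Gaussian moment estimate \eqref{c1/2xi} yields $\EE_{x_0}\nors{x\kpo-x_k}^2 \lesssim 1/N$. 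Summing over the $[TN]+1$ subintervals covering $[0,T]$ produces an $x$-contribution of order $T/N$.

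For the $S$-contribution I would combine two estimates that are already available. Lemma \ref{lem:bound1} yields uniform-in-$k$ moment bounds on $\nors{x_k}$, and by convexity of the interpolation the same holds for $\nors{x\bn(v)}$; in particular $\sup_{v\in[0,T]}\EE_{x_0}(1+\nors{x\bn(v)}^2)^2<\infty$. On the other hand, the pointwise estimate \eqref{otherbit} obtained during the proof of Lemma \ref{lem:Alx-Aly} gives $\EE_{x_0}\lv \bar S\bn(v)-S\bn(v)\rv^4 \lesssim 1/N^3$ uniformly in $v$. A single application of Cauchy--Schwarz then produces
$$
\EE_{x_0}\bigl[(1+\nors{x\bn(v)}^2)\,\lv \bar S\bn(v)-S\bn(v)\rv^2\bigr] \;\lesssim\; N^{-3/2},
$$
and integrating in $v$ yields a bound of order $T/N^{3/2}$, which vanishes as $N\ra\infty$.

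No real obstacle is expected: once the product splitting is in place, the argument is essentially bookkeeping against ingredients already established. The only minor subtlety worth flagging is that an $L^2$ control on $\bar S\bn-S\bn$ would be too weak once it is paired with the unbounded factor $1+\nors{x\bn}^2$, which is why one must appeal to the stronger $L^4$ estimate \eqref{otherbit} rather than to the $L^2$-type bound implicit in \eqref{lim:Alx-Aly}.
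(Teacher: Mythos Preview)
Your proposal is correct and follows essentially the same approach as the paper: the same product splitting $\Theta(\bar x,\bar S)-\Theta(x,S)=(F(\bar x)-F(x))\dl(\bar S)+F(x)(\dl(\bar S)-\dl(S))$, the same pointwise bound $\nors{\bar x\bn-x\bn}^2\lesssim 1/N$ from \eqref{c1/2xi}, and the same appeal to \eqref{boundonmomentsofx} together with the $L^4$ estimate \eqref{otherbit} for the $S$-term. The paper's write-up is terser but the ingredients and structure are identical.
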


\begin{lemma}\label{lastbit}
Let the assumptions of Theorem \ref{thm:mainthm1} hold. Then for any fixed $T>0$
$$
\EE_{x_0}\int_0^T \nors{ 
\Theta ({x}\bn(v), {S}\bn (v))-\Theta({x}\bn(v), 
{S(v)})}^2 dv \lra 0 \quad \mbox{as} \quad N \ra \infty.
$$
\end{lemma}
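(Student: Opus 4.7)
The plan is to exploit the product structure $\Theta(x,S) = F(x)\dl(S)$ recalled in \eqref{Theta}. Writing
\begin{equation*}
\Theta(x\bn(v), S\bn(v)) - \Theta(x\bn(v), S(v)) = F(x\bn(v))\bigl[\dl(S\bn(v)) - \dl(S(v))\bigr],
\end{equation*}
I would combine the global Lipschitzianity and boundedness of $\dl$ from Lemma \ref{lem:propofDandGamma} with the linear bound $\nors{F(x)} \les 1 + \nors{x}$, a consequence of the global Lipschitzianity of $F$ from Lemma \ref{lem:lipschitz+taylor}, to obtain the pointwise estimate
\begin{equation*}
\nors{\Theta(x\bn(v), S\bn(v)) - \Theta(x\bn(v), S(v))}^2 \les \bigl(1 + \nors{x\bn(v)}^2\bigr)\,|S\bn(v) - S(v)|^2.
\end{equation*}

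Next, I would take expectation and apply Cauchy--Schwarz pointwise in $v$, obtaining
\begin{equation*}
\EE_{x_0}\nors{\Theta(x\bn(v),S\bn(v)) - \Theta(x\bn(v),S(v))}^2 \les \bigl(\EE_{x_0}(1+\nors{x\bn(v)}^2)^2\bigr)^{1/2} \bigl(\EE_{x_0}|S\bn(v)-S(v)|^4\bigr)^{1/2}.
\end{equation*}
The first factor is bounded uniformly in $N$ and $v \in [0,T]$ because $x\bn(v)$ is a convex combination of two consecutive iterates $x_k^N, x_{k+1}^N$ and the moment bound \eqref{boundonmomentsofx} is uniform over $0\le k \le [TN]+1$. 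Consequently, the whole argument reduces to showing the pointwise convergence $\EE_{x_0}|S\bn(v) - S(v)|^4 \to 0$ for each $v\in[0,T]$; dominated convergence in the $v$-integral then closes the argument, since the integrand is uniformly bounded in $N$ and $v$ via \eqref{boundonmomentsofsk} and \eqref{solofODEisbdd}.

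The main obstacle is this pointwise $L^4$ convergence, since Theorem \ref{thm:weak conv of Skn} (already proved at this stage) gives only weak convergence of $S\bn$ to $S$ in $C([0,T];\R)$. The crucial observation is that because the limit $S$ is \emph{deterministic}, weak convergence to $S$ in the uniform topology of $C([0,T];\R)$ automatically upgrades to convergence in probability, and in particular $S\bn(v) \to S(v)$ in probability for every fixed $v$. To upgrade convergence in probability to $L^4$ convergence I would invoke uniform integrability of the family $\{|S\bn(v) - S(v)|^{4}\}_N$, which follows from the uniform-in-$N$ bound $\sup_N \EE_{x_0}|S\bn(v)|^{5} < \infty$ (a consequence of \eqref{boundonmomentsofsk} applied to the two iterates whose convex combination defines $S\bn(v)$), together with the boundedness of $S(v)$ from \eqref{solofODEisbdd}. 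Vitali's convergence theorem then gives $\EE_{x_0}|S\bn(v) - S(v)|^4 \to 0$, completing the plan.
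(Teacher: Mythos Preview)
Your proof is correct and follows essentially the same route as the paper's (two-line) sketch: factor $\Theta(x,S)=F(x)\dl(S)$, control the $F$-factor via the moment bounds on $x\bn$, and conclude from the weak convergence $S\bn \Rightarrow S$ together with the fact that the limit $S$ is deterministic. The paper's only shortcut is to invoke the \emph{boundedness} of $\dl$ directly rather than its Lipschitzianity, so that $|\dl(S\bn)-\dl(S)|$ is uniformly bounded and the bounded convergence theorem applies immediately once convergence in probability is established; this spares you the uniform-integrability/Vitali step for $|S\bn - S|^4$, but your route is equally valid.
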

We will prove Lemma \ref{lem:noise1} in  Section 
\ref{sec:noisex} and Lemma \ref{lem:err2tozero},  Lemma \ref{lem:err3tozero}, Lemma \ref{lastbit} in Section \ref{sec:driftx}. 

\subsection{Analysis of the Drift}\label{sec:driftx}
In what follows we will need some preliminary estimates, which we list in Lemma \ref{lem:bound2} below. 

\begin{lemma}\label{lem:bound2}
Under the assumptions of Theorem \ref{thm:mainthm1}, the following holds: 
\begin{description}
 \item[i)]  Let $Y$ be a positive random variable such that $\exo \lv Y \rv^q < \infty$ for all $q \geq 1$ (should $Y$ depend on $k$ and $N$, all the moments are assumed to be bounded independently of $k$ and $N$).  Then, uniformly over $0 \leq k \leq [TN]+1$, 
\be\label{lembound22}
\limsup_{N\ra \infty}\exo \left[ Y \sum_{i=1}^N i^{2s} \lambda_i^2 \lv \zeta\kn\rv^{p} \right] < \infty, 
\qquad \mbox{for all } \, p\geq 0.
\ee
 \item[ii)] Moreover, 
\be\label{lembound26}
\frac{1}{N}\sum_{k=0}^{[TN]}\exo \sum_{i=1}^N i^{2s} \lambda_i^2\frac {\lv \zeta\kn\rv^{p}}{(N S_k^N)^{\alpha}}  \stackrel{N\ra \infty}{\lra} 0, 
\quad \mbox{for all } \, p\geq \frac{2\alpha}{\varsigma} > 0,
\ee
where we recall that the constant $1/2 \leq  \varsigma<1$ is the one appearing in Assumption \ref{ass:1}.
\item[iii)] Finally, 
\be\label{lembound24}
 \mathbb{E}_k \frac{1}{(1+ \lv R \rv\sqrt{N})^2}\les \left( 1+\nors{x_k^N}^2\right)
 \frac{1}{(N S_k^N)^{1/4}} +
 \frac{1}{\sqrt{N S_k^N}} \, .
\ee
\end{description}
\end{lemma}
\begin{proof}
See Appendix B. \end{proof}

\begin{remark}[On Lemma \ref{lem:bound2} and Lemma \ref{lem:bound1}] \label{rem:differ1}
\textup{The proofs of Lemma \ref{lem:bound2} and Lemma \ref{lem:bound1} bring up some of the main differences between the stationary  and the non-stationary case, so it is worth making some comments.}
\begin{itemize}
\item
\textup{ If we start the chain in stationarity, i.e. $x_0^N\sim \pi^N$, where $\pi^N$ has been defined in \eqref{piN}, then $x_k^N \sim \pi^N$ for every $k \geq 0$. As already observed in the introduction, $\pi^N$ is absolutely continuous with respect to a Gaussian measure; because all the almost sure properties are preserved under this change of measure, in the stationary regime most of the estimates of interest need to be shown only for $x \sim \pi_0$.  
If  $x \sim \pi_0$ then  $x^N= \sum_{i=1}^N \lambda_i \rho^i \phi_i$, where $\rho^i$ are i.i.d.~ $\cN(0,1)$.
 Therefore, recalling \eqref{defzetakn} (see also \eqref{z=x+psi}), one gets
\be\label{shoe}
\lv \zeta\inn \rv^p \les \lv \rho^i\rv^p+ \|x\|_s^p \,.
\ee
With this observation it is then clear that in stationarity the bounds \eqref{boundonmomentsofx} and \eqref{boundonmomentsofsk} are trivially true, 
and  \eqref{lembound22}  follows easily  from  \eqref{shoe} and \eqref{boundonmomentsofx}. 
For the same reason, in the stationary case the estimate \eqref{lembound2ebounded} is  a consequence of Fernique's Theorem,  see \cite[page 916]{Matt:Pill:Stu:11}.  With a similar reasoning and by \eqref{skstat} one can see that also \eqref{lembound26} holds in stationarity.}
\item {\textup{ In our case, i.e. out of stationarity,  proving the bounds of Lemma \ref{lem:bound2} and Lemma \ref{lem:bound1}  requires a bit of an argument. In particular, the reason why the limit \eqref{lembound26} holds can be understood at least heuristically observing that $S_k^N$ converges to $S(t)$ (i.e. to a finite number, which is strictly positive under our assumptions and it converges to 1 if we work in stationarity, see \eqref{skstat}). Combining this observation with  \eqref{lembound22} gives, heuristically, \eqref{lembound26}.  
 \item On a minor note, we point out that the limit \eqref{lembound26} might not hold for $k=0$ if we were to allow $S_0=0$. Indeed, suppose again for simplicity that $\Psi=0$.  If $S_0=0$ and the sequence of partial sums $\sum_{j=1}^N \frac{\lv x_0^{j,N} \rv^2}{\lambda_j^2}$ is convergent  then the quantity on the LHS of \eqref{lembound26} is in general only bounded.  (However if we were to extend the proof to the case $S_0=0$ we would not need \eqref{lembound26} to hold at $k=0$).
}} 
\end{itemize}
\hfill{$\Box$}
\end{remark}

\begin{proof}[Proof of Lemma \ref{lem:err2tozero}]
Set 
\begin{align}
e_k^N&:=\Theta^N (x_k^N)- \Theta (x_k^N, S\kkn) =
N\EE_k[x\kpo^N-x_k^N] - \Theta (x_k^N, S\kkn)\label{defen}
\end{align}
 Then 
\begin{align}
\EE_{x_0} \int_0^T  \| \Theta^N(\bar{x}\bn (v))-\Theta(\bar{x}\bn(v), \bar{S}\bn (v)) \|_s^2  \, dv
&=  \EE_{x_0} \frac{1}{N} \sum_{k=0}^{[TN]}  \nors{e_k^N}^2
\label{driftbig}\\
&+   \left(T-\frac{[TN]}{N}\right) \,  \EE_{x_0} \nors{e_{[TN]}^N}^2.\label{driftsmall}
\end{align}
If $e\kn$ is the $i-$th component of $e_k^N$, the sum on the RHS of \eqref{driftbig} may be rewritten as
$$
 \EE_{x_0} \frac{1}{N} \sum_{k=0}^{[TN]} \sum_{i=1}^N i^{2s}
\lv e\kn\rv^2 .
$$
The statement now follows from Lemma \ref{lem:stimaEkn} below.  
\end{proof}

\begin{proof}[Proof of Lemma \ref{lem:err3tozero}]
From \eqref{Theta} we  have
\begin{align}
\nors{\Theta(\bar{x}\bn(v), \bar{S}\bn(v))  - \Theta(x\bn(v), S\bn(v))}^2 & \les 
\lv\dl (\bar{S}\bn (v)) \rv^2 \nors{F(\bar{x}\bn (v))-F(x\bn (v))}^2 \nonumber\\
& +\nors{F(x\bn (v)) }^2
\lv \dl(\bar{S}\bn) - \dl(S\bn)\rv^2\nonumber\\
& \les  \nors{\bar{x}\bn (v)-x\bn (v)}^2 \nonumber\\
& + (1+ \nors{x\bn (v)}^2) \lv \bar{S}\bn (v) -  S\bn (v) \rv^2,\label{comb1}
\end{align}
having used the boundedness and Lipshitzianity of $\dl$,  the Lipshitzianity of $F$ 
(Lemma \ref{lem:lipschitz+taylor} and Lemma \ref{lem:propofDandGamma}, respectively) and the bound 
\be\label{C}
\| F(z)\|_s^2 \les 1+\nors{z}^2. 
\ee 
The above bound is a consequence of  Assumption \ref{ass:1} and 
$$
\|\C \nabla \Psi (z)\|_s^2 = \sum_{i=1}^{\infty} i^{2s}\lambda_i^2 \lv (\nabla\Psi(z))^i\rv^2\leq \| \nabla\Psi(z)\|_{-s}^2.
$$
Moreover, if $t_k \leq  v \leq t_{k+1}$ then from the definition \eqref{interpolant},  we have
\begin{align}
\exo \nors{\bar{x}\bn(v)-x\bn(v)}^2 &= \exo \nors{(Nv-k)(x\kpo-x_k)}^2  \nonumber\\
&\stackrel{\eqref{chainxcomponents}}{\les} \frac{1}{N}\EE \nors{\C_N^{1/2}\xi^N_{k+1}}^2 \stackrel{\eqref{c1/2xi}}{\les} \frac{1}{N}\, .
\label{comb2}
\end{align}

The statement of the lemma is a consequence of  \eqref{comb1}, \eqref{comb2},  \eqref{boundonmomentsofx}, \eqref{starprime}  and \eqref{v2}.
\end{proof}

\begin{proof}[Proof of Lemma \ref{lastbit}] Analogous to the proof of 
Lemma \ref{lem:err3tozero},  so we only sketch it. 
\begin{align*}
\exo \nors{\Theta(x\bn(t), S\bn(t))-\Theta(x\bn(t), S(t))}^2\les \exo
\nors{F(x\bn (t))}^2 \lv \dl (S\bn(t))-\dl(S(t)) \rv^2\,.
\end{align*}
Now the RHS goes to zero thanks to the Lipshitzianity  of $\dl$,$\eqref{C}$,  Lemma \ref{lem:bound1} and Lemma \ref{corollarylemma}.
\end{proof}

\begin{lemma}\label{lem:stimaEkn}
Let the assumptions of Theorem \ref{thm:mainthm1} hold and recall that $e\kn$ is the $i$-th component of $e_k^N$, defined in \eqref{defen}. 
Then, 
$$
 \EE_{x_0} \frac{1}{N} \sum_{k=0}^{[TN]} \nors{e_k^N}^2 = \EE_{x_0} \frac{1}{N} \sum_{k=0}^{[TN]} \sum_{i=1}^N i^{2s}
\lv e\kn\rv^2  \lra 0 \, \quad \mbox{as } N \ra \infty.
$$
\end{lemma}
\begin{proof}[Proof of Lemma \ref{lem:stimaEkn}] This proof is partly analogous to the proofs of \cite[Lemma 5.5-Lemma 5.11]{Matt:Pill:Stu:11}. The main difference is that here we deal with  time dependent coefficients. The proof will only be  detailed  when it differs from \cite{Matt:Pill:Stu:11}; where it does not we will provide fewer details.  

From the definition of $\Theta$, equation  \eqref{Theta}, the $i$-th component of $\Theta$ calculated at $(x_k^N, S_k^N)$ is
\be\label{exprdldiffcoeff}
\Theta^i(x_k^N, S_k^N)= - \lambda_i \zeta\kn \dl(S_k^N) = - 2 \ell^2 \lambda_i \zeta\kn  \EE e^{Z_{\ell,k}} {\bf 1}_{\{ Z_{\ell,k}<0\}} \, .
\ee
where the second equality is a consequence of \eqref{ecarexp} and \eqref{approx4} .
Therefore the $i$-th component of $e_k^N$ is
\begin{align*}
e\kn &=\sqrt{2N \ell^2} \,  \lambda_i \EE_k^{\xi} \left[  \left( 1 \wedge e^{Q_k} \right) \xi_{k+1}^{i,N}   \right] - \Theta^i(x_k^N, S_k^N)\\
& =  \sqrt{2N \ell^2} \,  \lambda_i \EE_k^{\xi} \left[  \left( 1 \wedge e^{Q_k} \right) \xi_{k+1}^{i,N}   \right]+ \lambda_i \zeta\kn 
\dl(S_k^N)\, .
\end{align*}
Following the reasoning of Section  \ref{sec:heurdiffcoeffforchainx}, we decompose $e\kn$ as  follows:
$$
e\kn=\sqrt{2N \ell^2} \,  \lambda_i \EE_k^{\xi} \left[\left( 1 \wedge e^{R^i- \sqrt{\frac{2\ell^2}{N}} \zeta\kn \xi\kpon   } \right) \xi_{k+1}^{i,N} \right] - \Theta^i(x_k^N, S_k^N)
  + e_{1,k}^{i,N}+e_{2,k}^{i,N} \, ,  \quad \mbox{where}
$$
\begin{align}
e_{1,k}^{i,N} &:= \sqrt{2N \ell^2} \,  \lambda_i      
\EE_k^{\xi} \left[ \left( \left( 1 \wedge e^{Q_k} \right) - \left( 1 \wedge e^R \right)\right) \xi_{k+1}\inn   \right]      \label{ek1n}\\
e_{2,k}^{i,N} &:= \sqrt{2N \ell^2} \,  \lambda_i      
\EE_k^{\xi} \left[ \left( \left( 1 \wedge e^{R} \right) - 
\left( 1 \wedge e^{R^i- \sqrt{\frac{2\ell^2}{N}} \zeta\kn \xi\kpon   } \right)\right) \xi_{k+1}\inn   \right]      \label{ek2n}\, .
\end{align}
 We now use equality \eqref{eqclar},  leading to:
\begin{align*}
e\kn&= -2\ell^2 \lambda_i \zeta\kn \EE_k^{\xi^i_{-}} 
e^{R^i} \Phi\left( \frac{-R^i}{\sqrt{\frac{2\ell^2}{N}}
\lv \zeta_k^{i,N} \rv} \right)-\Theta^i (x_k^N, S_k^N)+ e_{1,k}^{i,N}+e_{2,k}^{i,N}+e_{3,k}^{i,N},
\end{align*}
where
\begin{align}
e_{3,k}^{i,N}&:= -2\ell^2 \lambda_i \zeta\kn \left( e^{\frac{\ell^2}{N} \lv \zeta\kn \rv^2} -1  \right) \EE_k^{\xi^i_{-}}
e^{R^i}
\Phi\left(  \frac{-R^i}{\sqrt{\frac{2\ell^2}{N}}
 \lv \zeta_k^{i,N} \rv } - \sqrt{\frac{2\ell^2}{N}} \lv \zeta_k^{i,N} \rv \right)\nonumber\\
 & -2\ell^2 \lambda_i \zeta\kn  \EE_k^{\xi^i_{-}}
e^{R^i}  \left[ \Phi\left(  \frac{-R^i}{\sqrt{\frac{2\ell^2}{N}}
 \lv \zeta_k^{i,N} \rv } - \sqrt{\frac{2\ell^2}{N}} \lv \zeta_k^{i,N} \rv \right) - 
\Phi\left(  \frac{-R^i}{\sqrt{\frac{2\ell^2}{N}}
 \lv \zeta_k^{i,N} \rv }  \right)  \right].  \label{ek3n} 
\end{align}
Finally, by setting
\begin{align}
e_{4,k}^{i,N}&:=  -2\ell^2 \lambda_i \zeta\kn  \EE_k^{\xi}\left[
e^{R^i}  \Phi\left(  \frac{-R^i}{\sqrt{\frac{2\ell^2}{N}} \lv \zeta_k^{i,N} \rv }  \right)  - 
e^{R^i} {\bf 1}_{\{R^i<0\}}  \right] ,  \label{ek4n} \\
e_{5,k}^{i,N}&:=  -2\ell^2 \lambda_i \zeta\kn
\EE_k^{\xi}  \left[ e^{R^i} {\bf 1}_{\{R^i<0\}}
- e^{Z_{\ell,k}} {\bf 1}_{\{ Z_{\ell,k}<0\}} \label{ek5n}
\right]\,,
\end{align}
 and using \eqref{exprdldiffcoeff},  we obtain
\be\label{6sum}
e\kn=  \sum_{h=1}^5 e_{h,k}^{i,N} \,.
\ee
Now that we have the above  decomposition,  we need to find bounds on each of the $e_{h,k}^{i,N}$'s, 
$h=1 \dd 5$,  which is what we shall do next. 
\begin{description}
\item[$\bullet\, e_{1,k}^{i,N}$ and $e_{2,k}^{i,N}$:] The bounds on $e_{1,k}^{i,N}$ and $e_{2,k}^{i,N}$ are straightforward:
\be \label{boundone1}
\lv  e_{1,k}^{i,N} \rv \les \frac{\lambda_i}{\sqrt{N}} \quad \mbox{and} \quad 
\lv  e_{2,k}^{i,N} \rv \les \frac{\lambda_i}{\sqrt{N}}\, ,
\ee
The first estimate is a consequence of \eqref{Q=R+rN}, \eqref{decompQvera}, \eqref{exprn} and the Lipshitzianity of the function
$f(x)=1 \wedge e^x$; for the second we used  definition  \eqref{defRi}.  
\smallskip
\noindent

\item[$\bullet\, e_{3,k}^{i,N}$:] To study $e_{3,k}^{i,N}$, we set
\be\label{mod11}
{e}_{3,k}^{i,N}: = \bar{e}_{3,k}^{i,N} + \tilde{e}_{3,k}^{i,N} \, ,
\ee
 with
\begin{align*}
\bar{e}_{3,k}^{i,N} & := -2\ell^2 \lambda_i \zeta\kn \left( e^{\frac{\ell^2}{N} \lv \zeta\kn \rv^2} -1  \right) \EE_k^{\xi^i_{-}}
e^{R^i}
\Phi\left(  \frac{-R^i}{\sqrt{\frac{2\ell^2}{N}}
 \lv \zeta_k^{i,N} \rv } - \sqrt{\frac{2\ell^2}{N}} \lv \zeta_k^{i,N} \rv \right)\nonumber\\
\tilde{e}_{3,k}^{i,N} := & -2\ell^2 \lambda_i \zeta\kn  \EE_k^{\xi^i_{-}}
e^{R^i}  \left[ \Phi\left(  \frac{-R^i}{\sqrt{\frac{2\ell^2}{N}}
 \lv \zeta_k^{i,N} \rv } - \sqrt{\frac{2\ell^2}{N}} \lv \zeta_k^{i,N} \rv \right) - 
\Phi\left(  \frac{-R^i}{\sqrt{\frac{2\ell^2}{N}}
 \lv \zeta_k^{i,N} \rv }  \right)  \right].
\end{align*}
To estimate $\tilde{e}_{3,k}^{i,N}$, we use  the boundedness and Lipshitzianity of $\Phi$ together with
\be\label{EeRi}
\EE_k^{\xi}e^{R^i} \les e^{\frac{\ell^2}{N} \| \zeta_k^N\|^2},
\ee
see \cite[(5.20)]{Matt:Pill:Stu:11}. We therefore obtain
\be\label{bounde3}
\lv  \tilde{e}_{3,k}^{i,N} \rv \les  \lambda_i  \frac{\lv \zeta\kn\rv^2}{\sqrt{N}} 
e^{\frac{\ell^2}{N} \| \zeta_k^N\|^2} \, .
\ee
The term $\bar{e}_{3,k}^{i,N}$ will be studied separately later. 
\smallskip
\noindent

\item[$\bullet\, e_{4,k}^{i,N}$:]  We act as in the proof of \cite[Lemma 5.7-Lemma 5.9]{Matt:Pill:Stu:11} and obtain 
\be\label{bounde4}
\lv  e_{4,k}^{i,N} \rv \les \lambda_i \lv\zeta\kn  \rv e^{\|\zeta_k^N\|^2/N} \left(1+ \lv \zeta\kn \rv\right) 
\left( \mathbb{E}_k  \frac{1}{(1+ \lv R \rv \sqrt{N})^2} \right)^{1/4}.
\ee

 \smallskip
\noindent

\item[$\bullet\, e_{5,k}^{i,N}$:]  Let $g(x):=e^x {\bf 1}_{\{ x<0  \}}$;  using the same argument as in  \cite[page 923]{Matt:Pill:Stu:11}, if $X$ and $Y$ are two random variables such that one of them has a density with respect to the Lebesgue measure and such a density is bounded by $M$, then 
\be\label{labM}
\lv \EE g(X) - \EE g(Y) \rv \les \sqrt{ M \,Wass(X,Y)}.
\ee
Such a result is applicable to $ R^i$ and $Z_{\ell,k}$ as $Z_{\ell,k}$ is (conditionally) Gaussian with variance $S_k^N$.  Therefore using \eqref{wassrir}, \eqref{wassrzlk} and \eqref{labM} with $M= 1/\sqrt{2 \pi \,S_k^N}$, we have 
\begin{align*}
\lv \EE_k^{\xi}e^{R^i} {\bf 1}_{\{R^i<0\}}  -  
 \EE e^{Z_{\ell,k}} {\bf 1}_{\{ Z_{\ell,k}<0\}} \rv
& \les  \frac{1}{\left( S_k^N\right)^{1/4}} \sqrt{Wass (R^i, Z_{\ell,k})} \\
& \les \frac{1}{N^{1/4}} \frac{1}{\left( S_k^N\right)^{1/4}}
\left[\left(1+ \nors{x_k^N}\right)^{1/2}+ \sqrt{1+ \lv \zeta\kn \rv}\right]
\end{align*}
The above, together with \eqref{ek5n}, implies
\be\label{bounde6}
\lv e_{5,k}^{i,N} \rv \les \frac{\lambda_i \lv \zeta\kn \rv}{\left( S_k^N\right)^{1/4}}
\frac{1}{N^{1/4}}
\left[\left(1+ \nors{x_k^N}\right)^{1/2}+ \sqrt{1+ \lv \zeta\kn \rv}\right]
\ee
\end{description}
From   the bounds \eqref{boundone1}, \eqref{bounde3},
 \eqref{bounde4} and \eqref{bounde6},  we get
\begin{align*}
\exo \!\! \sum_{h=1, h\neq 3}^{5} \sum_{i=1}^N i^{2s}  \lv e_{h,k}^{i,N} \rv^2 +  \sum_{i=1}^N i^{2s}  \lv \tilde{e}_{3,k}^{i,N} \rv^2  & \les 
\exo \sum_{i=1}^N \frac{i^{2s} \lambda_i^2}{N}\\
&+  \exo \sum_{i=1}^N i^{2s} \lambda_i^2  \frac{\lv \zeta\kn\rv^4}{N}  e^{\frac{2\ell^2}{N}
\|\zeta_k^N\|^2}\\
& +\exo e^{\frac{2}{N}\|\zeta_k^N\|^2}  \left(\mathbb{E}_k   \frac{1}{(1+ \lv R \rv \sqrt{N})^2} \right)^{1/2}
\sum_{i=1}^N i^{2s} \lambda_i^2 \left( \lv \zeta\kn\rv^2
+ \lv \zeta\kn\rv^4 \right)\\
&+ \exo  \sum_{i=1}^N  i^{2s}\lambda_i^2  \frac{\lv \zeta\kn\rv^2+\lv \zeta\kn\rv^3 }{\sqrt{N}}
\frac{1}{\sqrt{S_k^N}}\\
&+ \exo  \sum_{i=1}^N  i^{2s}\lambda_i^2 \lv \zeta\kn\rv^2 \frac{\left( 1+ \nors{x_k} \right)}{\sqrt{N}} 
\frac{1}{\sqrt{S_k^N}}\,.
\end{align*}
After simple manipulations and using  Lemma \ref{lem:bound1} and Lemma \ref{lem:bound2}, we have 
$$
\exo  \frac{1}{N}  \sum_{k=0}^{[TN]} \left( \sum_{h=1, h\neq 3}^{5} \sum_{i=1}^N i^{2s}  \lv e_{h,k}^{i,N} \rv^2 +  \sum_{i=1}^N i^{2s}  \lv \tilde{e}_{3,k}^{i,N} \rv^2\right)   \longrightarrow 0, 
$$
as $N \ra \infty$. 
\begin{remark}\label{remvalue}
\textup{When we apply \eqref{lembound26} of Lemma \ref{lem:bound2} to the above, we need to enforce the condition $p \geq 2\alpha / \varsigma$, under which \eqref{lembound26} holds.  Rewriting such a condition as $\varsigma \geq 2\alpha /p$  and observing that this condition is always applied in the above with $p\geq 2$ and $\alpha \leq 1$, we get the constraint $\varsigma \geq 1/2$ appearing in Assumptions   \ref{ass:1}. } 
\end{remark}
Returning to the proof, if we prove the limit, 
$$
\exo  \frac{1}{N}  \sum_{k=0}^{[TN]} \sum_{i=1}^N i^{2s}  \lv \bar{e}_{3,k}^{i,N} \rv^2   \ra 0, 
$$
 we are done.  To study $\bar{e}_{3,k}^{i,N}$, we use again \eqref{EeRi} and the bound $\Phi \leq 1$, obtaining
$$
\lv \bar{e}_{3,k}^{i,N}\rv \les \lambda_i \lv \zeta_k^{i,N}\rv
 \left( e^{\frac{\ell^2}{N} \lv \zeta\kn \rv^2} -1  \right) e^{\frac{\ell^2}{N} \| \zeta_k^N\|^2}.
$$
Therefore, by the weighted Jentzen inequality and \eqref{lembound2ebounded}, 
\begin{align*}
\exo \sum_{i=1}^N i^{2s} \lv \bar{e}_{3,k}^{i,N}\rv^2
& \les \exo \left[ e^{\frac{2\ell^2}{N} \| \zeta_k^N\|^2} \sum_{i=1}^N i^{2s} \lambda_i^2 \lv \zeta_k^{i,N}\rv^2
\left( e^{\frac{\ell^2}{N} \lv \zeta\kn \rv^2} -1  \right)^2\right] \\
& \les \left( \exo \sum_{i=1}^N i^{2s} \lambda_i^2 \lv \zeta_k^{i,N}\rv^4
\left( e^{\frac{\ell^2}{N} \lv \zeta\kn \rv^2} -1  \right)^4 \right)^{1/2}.
\end{align*}
Using the local Lipshitz property of the function $e^x$, we have 
\begin{align*}
& \exo \sum_{i=1}^N i^{2s} \lambda_i^2 \lv \zeta_k^{i,N}\rv^4
\left( e^{\frac{\ell^2}{N} \lv \zeta\kn \rv^2 } -1  \right)^4\\
& = \exo \sum_{i=1}^N i^{2s} \lambda_i^2 \lv \zeta_k^{i,N}\rv^4
\left( e^{\frac{\ell^2}{N} \lv \zeta\kn \rv^2 } -1  \right)^4  {\bf 1}_{\{ \frac{\ell^2}{N} \lv \zeta\kn \rv^2 < \log \sqrt{N} \}}\\
& + \exo \sum_{i=1}^N i^{2s} \lambda_i^2 \lv \zeta_k^{i,N}\rv^4
\left( e^{\frac{\ell^2}{N} \lv \zeta\kn \rv^2 } -1  \right)^4  {\bf 1}_{\{ \frac{\ell^2}{N} \lv \zeta\kn \rv^2 \geq \log \sqrt{N} \}}\\
& \les e^{\log \sqrt{N}} \exo \sum_{i=1}^N i^{2s} \lambda_i^2 
\frac{\lv \zeta\kn \rv^6}{N} 
+  \exo \sum_{i=1}^N i^{2s} \lambda_i^2 \lv \zeta_k^{i,N}\rv^4
\left( e^{ \frac{\ell^2 \| \zeta_k^N\|^2}{N}} -1  \right)^4  {\bf 1}_{\{ \frac{\ell^2}{N} \| \zeta_k^N\|^2 \geq \log \sqrt{N} \}}.
\end{align*}
We now use Markov Inequality,  \eqref{lembound22} and \eqref{lembound2ebounded}  to estimate the second addend, obtaining 
\begin{align*}
& \exo \sum_{i=1}^N i^{2s} \lambda_i^2 \lv \zeta_k^{i,N}\rv^4
\left( e^{\frac{\ell^2}{N} \lv \zeta\kn \rv^2} -1  \right)^4\\
&  \leq 
\exo \sum_{i=1}^N i^{2s} \lambda_i^2 
\frac{\lv \zeta\kn \rv^6}{\sqrt{N}}  + 
\left( \mathbb{P}\left\{\frac{\ell^2}{N} 
\| \zeta_k^N\|^2 \geq \log \sqrt{N}\right\}\right)^{1/2} 
\\
& \les \frac{1}{\sqrt{N}} +
 \left( \exo e^{\frac{ \ell^2}{N} 
\| \zeta_k^N\|^2} e^{- \log{\sqrt{N}}}  \right)^{1/2} 
\stackrel{\eqref{lembound2ebounded}  }{\longrightarrow} 0. 
\end{align*}

This concludes the proof. 
\end{proof}

\subsection{Analysis of the Noise}\label{sec:noisex}
The proof of Lemma \ref{lem:noise1} is based on   Lemma \ref{lem:finidimdistr+tightness} below. In order to state such a lemma let us introduce the following notation and definitions. Let $k_N:[0,T] \ra \mathbb{Z}_+$ be a sequence of nondecreasing, right continuous functions indexed by $N$, with $k_N(0)=0$ and $k_N(T)\geq 1$. 
Let $\h$ be any Hilbert space and  $\{X\kkn, \mathcal{F}\kkn\}_{0\leq k \leq k_N(T)}$ be a $\h$-valued martingale 
difference array (MDA), i.e. a double sequence of random variables such that $\EE[X\kkn\vert\mathcal{F}_{k-1}^N ]=0$, $\EE[\| X\kkn\|^2\vert\mathcal{F}_{k-1}^N ]< \infty$ almost surely and $\mathcal{F}^{k-1, N} \subset \mathcal{F}\kkn$. Consider the process $\mathcal{X}^N(t)$ defined by
$$
\mathcal{X}^N(t):=\sum_{k=1}^{k_N(t)}X_k^N \,,
$$
if $k_N(t)\geq 1$ and $k_N(t) > \lim_{v\ra 0+} k_N(t-v)$ and by linear interpolation otherwise.   With this set up we state the following result.

\begin{lemma}\label{lem:finidimdistr+tightness}
 Let $\mathfrak{T}:\h \ra \h$ be a self-adjoint positive definite trace class operator on a separable Hilbert space $(\h, \|\cdot \|)$. Suppose
\begin{description}
\item[i)] there exists a continuous and positive function $f(t)$ defined on $[0,T]$ such that
$$\lim_{N\ra \infty} \sum_{k=1}^{k_N(T)} \EE(\|{X\kkn}\|^2\vert \mathcal{F}_{k-1}^N)= \tr(\mathfrak{T}) \int_0^T f(t) dt \,\qquad {\mbox{in probability}} ; $$ 
\item[ii)] if $\{{\phi}_j\}$ is an orthonormal basis of $\h$ then 
$$
\lim_{N\ra \infty} \sum_{k=1}^{k_N(T)} 
\EE(\langle X\kkn,{\phi}_j  \rangle \langle X\kkn,{\phi}_i  \rangle\vert \mathcal{F}_{k-1}^N)=0\, \quad \mbox{for all }\,\, i\neq j\, ;
$$
\item[iii)]for every fixed $\epsilon>0$,
$$
\lim_{N \ra \infty} \sum_{k=1}^{k_N(T)}
\EE(\|{X\kkn}\|^2) {\bf 1}_{\{\|{X\kkn}\|^2\geq \epsilon \}}
\vert \mathcal{F}_{k-1}^N )=0,  \qquad\mbox{in probability}.
$$
\end{description}
Then the sequence $\mathcal{X}^N$ converges weakly in $C([0,T]; \hs)$ to the stochastic integral $\int_0^T \sqrt{f(t)} dW_t$,  where $W_t$ is a 
$\h$-valued $\mathfrak{T}$-Brownian motion.
\end{lemma}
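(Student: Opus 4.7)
The plan is to prove this Hilbert-space martingale invariance principle by truncating in the eigenbasis of $S$, reducing to the classical finite-dimensional martingale central limit theorem for the principal components, and then controlling the tail via the trace-class hypothesis on $S$. Let $\{\phi_j,\sigma_j^2\}_{j\ge 1}$ denote the eigenpairs of $S$ (so $S\phi_j=\sigma_j^2\phi_j$, $\sum_j \sigma_j^2 = \tr(S)<\infty$) and let $\Pi^J$ be the orthogonal projection onto $\mathrm{span}\{\phi_1,\dots,\phi_J\}$.

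\textbf{Step 1 -- finite-dimensional projections.} For each fixed $J$, the process $\Pi^J \mathcal{X}^N$ takes values in a $J$-dimensional space, with coordinate martingales $\langle\mathcal{X}^N,\phi_j\rangle=\sum_{k}\langle X_k^N,\phi_j\rangle$. Combining (i) with (ii) applied in the eigenbasis of $S$, the predictable quadratic covariation of $\langle\mathcal{X}^N,\phi_i\rangle$ and $\langle\mathcal{X}^N,\phi_j\rangle$ converges in probability to $\delta_{ij}\sigma_j^2\int_0^t f(s)\,ds$, while (iii) delivers the Lindeberg condition for each coordinate (after Cauchy-Schwarz, since $|\langle X_k^N,\phi_j\rangle|\le \|X_k^N\|$). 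Applying the vector-valued martingale CLT (Rebolledo, or Ethier-Kurtz, Theorem 7.1.4) gives weak convergence of $\Pi^J\mathcal{X}^N$ in $C([0,T];\Pi^J\h)$ to $\sum_{j\le J}\sigma_j\phi_j\int_0^{\cdot}\sqrt{f(s)}\,dw_j(s)$, which is the corresponding truncation of the desired limit.

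\textbf{Step 2 -- tail estimate.} Since $(I-\Pi^J)\mathcal{X}^N$ is itself a martingale in the same filtration, Doob's $L^2$ inequality gives
$$
\EE \sup_{t\le T}\|(I-\Pi^J)\mathcal{X}^N(t)\|^2 \;\le\; 4\,\EE \sum_{k=1}^{k_N(T)} \|(I-\Pi^J)X_k^N\|^2.
$$
Writing $\|X_k^N\|^2=\sum_{j\ge 1}|\langle X_k^N,\phi_j\rangle|^2$ and using the coordinate-wise version of (i)+(ii) established in Step 1, each term $\sum_k \EE(|\langle X_k^N,\phi_j\rangle|^2|\mathcal{F}_{k-1}^N)$ converges to $\sigma_j^2\int_0^T f$, while the total sum converges to $\tr(S)\int_0^T f$. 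Subtracting yields
$$
\limsup_{N\to\infty}\EE \sum_{k=1}^{k_N(T)}\|(I-\Pi^J)X_k^N\|^2 \;\le\; \Big(\sum_{j>J}\sigma_j^2\Big)\int_0^T f(s)\,ds,
$$
which tends to $0$ as $J\to\infty$ by the trace-class property of $S$.

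\textbf{Step 3 -- conclusion and main obstacle.} Combining Steps 1 and 2 via the $3\epsilon$-approximation for weak convergence (Billingsley, Theorem 3.2) shows that $\mathcal{X}^N$ converges weakly in $C([0,T];\h)$ to $\int_0^{\cdot}\sqrt{f(s)}\,dW_s$, where $W_t=\sum_j\sigma_j w_j(t)\phi_j$ is an $S$-Brownian motion on $\h$. The main obstacle is the tail bound of Step 2: hypothesis (i) only controls the total conditional variance and does not a priori rule out that the mass accumulates in the high-frequency eigendirections of $S$. It is precisely hypothesis (ii) that is indispensable here, as it forces the conditional covariance of $\mathcal{X}^N$ to be asymptotically diagonal in the eigenbasis of $S$, so that the variance along $\phi_j$ can be matched with $\sigma_j^2\int_0^T f$ and Parseval used to extract the tail.
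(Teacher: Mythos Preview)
Your overall architecture---truncate in the eigenbasis of $S$, invoke a finite-dimensional martingale CLT on the projection, and control the tail by the trace-class property---is exactly the reduction the paper has in mind; the paper's own proof is only a two-line citation of \cite{MR833268} and \cite{Matt:Pill:Stu:11} to the effect that the infinite-dimensional statement reduces to the finite-dimensional one, with (i)--(ii) identifying finite-dimensional distributions and (iii) giving tightness.

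There is, however, a genuine gap in your Steps 1 and 2. You repeatedly assert that hypotheses (i) and (ii) force
\[
\sum_{k=1}^{k_N(T)} \EE\big(|\langle X_k^N,\phi_j\rangle|^2 \,\big|\, \mathcal{F}_{k-1}^N\big) \longrightarrow \sigma_j^2 \int_0^T f(s)\,ds
\]
coordinate by coordinate, and this is what drives both your identification of the limit in Step 1 and your tail bound in Step 2. But (i) only controls the \emph{sum} $\sum_j$ of these quantities, while (ii) only kills the off-diagonal entries; nothing in the stated hypotheses allocates the total variance $\tr(S)\int_0^T f$ among the eigendirections in proportion to the $\sigma_j^2$. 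A two-dimensional counterexample makes this concrete: with $S=\mathrm{diag}(1,2)$, $f\equiv 1$, $T=1$, an MDA whose coordinate conditional variances converge to $(2,1)$ rather than $(1,2)$ still satisfies (i) and (ii) verbatim, yet its limit is a $\mathrm{diag}(2,1)$-Brownian motion, not an $S$-Brownian motion. Your remark in Step 3 that ``(ii) is indispensable'' because it diagonalises the limiting covariance is correct, but diagonalisation alone does not fix the diagonal entries.

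In fairness, this is as much an imprecision in the lemma's statement as in your argument: the hypotheses as literally written do not determine the limit uniquely. In the paper's actual application (the verification for $M_k^{1,N}/\sqrt{N}$ in the proof of Lemma~\ref{lem:noise1}), what is in fact checked is the stronger coordinate-wise statement---see \eqref{useiniii2}, where the $j$-th summand $\lambda_j^2 j^{2s}\Gamma(S_k^N)$ appears explicitly---so the intended hypothesis (i) is really a per-coordinate condition, and with that strengthening your proof goes through without change.
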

\begin{proof} This lemma is in the same spirit as   
\cite[Proposition 4.1 and Remark 4.2]{Matt:Pill:Stu:11}. As observed in \cite[Proof of Theorem 5.1]{MR833268}, the statement just needs to be proved for a finite dimensional Hilbert space, i.e. in finite dimensions. The first two conditions are needed to ensure the weak convergence of the finite dimensional 
distributions of $\mathcal{X}^N$,  the last condition guarantees tightness of the sequence, see \cite[Theorem 3.2]{MR668684} and 
\cite[Remark 4.2]{Matt:Pill:Stu:11}.  One may also consult the more compact \cite[Section 5.5]{Ottobre2016}.
\end{proof}

\begin{proof}[Proof of Lemma \ref{lem:noise1}] We apply 
Lemma \ref{lem:finidimdistr+tightness}   with $k_N(t)=[tN]$,  $X_k^N=M_k^{1,N}/\sqrt{N}$ and  $\mathcal{F}_k^N$  the sigma-algebra generated by $\{ 
\gamma_{h+1}^N, \xi_{h+1}^N, \, 0\leq h\leq k\}$ to study the sequence $\eta^N(t)$, defined in (\ref{etaN}), in  the Hilbert space $\hs$. We now check that the three conditions of Lemma \ref{lem:finidimdistr+tightness} hold in the present case.  
 \begin{description}
\item[i)] 
 We need to show that 
\be\label{m1goestoint}
\frac{1}{N}\exo\sum_{k=0}^{[TN]} \EE_k \nors{M_k^{1,N}}^2 \lra \tr(\C_s) \int_0^T \Gl(S(u))du \, .
\ee
From the definition of $M_k^{1,N}$,  equation \eqref{Mkn}, we have
\be\label{useiniii}
\frac{1}{N} \nors{M_k^{1,N}}^2= \nors{x\kpo^N-x\kkn -\EE_k(x\kpo^N-x\kkn)}^2,
\ee
hence
\begin{align}
\frac{1}{N}\EE_k \nors{M_k^{1,N}}^2&=\EE_k \nors{x\kpo^N-x\kkn}^2 -\nors{\EE_k(x\kpo^N-x\kkn)}^2 \nonumber\\
&= \frac{2\ell^2}{N} \EE_k\nors{\gamma\kpo \C^{1/2}\xi\kpo^N}^2-
\nors{\EE_k(x\kpo^N-x\kkn)}^2, \label{diff+err}
\end{align}
where the above equality holds thanks to  $\eqref{chaininh}$. 
We will show that
\be\label{meanerrorterm}
\exo\sum_{k=1}^{[TN]}\nors{\EE_k(x\kpo^N-x_k^N)}^2\lra 0 \quad \mbox{as } N \ra \infty.  
\ee
Assuming the above for the moment, let us focus on the first addend in 
(\ref{diff+err}):
\begin{align*}
\frac{2\ell^2}{N} \EE_k\nors{\gamma\kpo \C_N^{1/2}\xi\kpo^N}^2&=
\frac{2\ell^2}{N} \sum_{j=1}^N \lambda_j^2j^{2s}\EE_k\lv  
\gamma\kpo \xi\kpo\jnn\rv^2\\
&= \frac{2\ell^2}{N} \sum_{j=1}^N \lambda_j^2j^{2s}\EE_k
  \left[(1\wedge e^Q) \lv\xi\kpo\jnn\rv^2\right]\\
  &=\frac{2\ell^2}{N} \sum_{j=1}^N \lambda_j^2j^{2s}\EE_k^{\xi}
  \left[(1\wedge e^{R_j}) \lv\xi\kpo\jnn\rv^2\right]+\frac{a_{1,k}^N}{N},
\end{align*}
where
\be\label{a1kn}
a_{1,k}^N:=2\ell^2 \sum_{j=1}^N \lambda_j^2j^{2s}\EE_k
  \left[\left((1\wedge e^{Q})-(1\wedge e^{R_j})\right) \lv\xi\kpo\jnn\rv^2\right].
\ee
We now use the same technique that we used for the drift coefficient (that is, we first take expectation with respect to $\xi^i$ and then with respect to $\xi\setminus \xi^i$), obtaining
\begin{align}
\frac{2\ell^2}{N} \EE_k\nors{\gamma\kpo \C^{1/2}\xi\kpo^N}^2&=
\frac{2\ell^2}{N} \sum_{j=1}^N \lambda_j^2j^{2s}\EE_k^{\xi_j^-}
  (1\wedge e^{R_j})+\frac{a_{1,k}^N}{N} \nonumber \\
  &=\frac{2\ell^2}{N} \sum_{j=1}^N \lambda_j^2j^{2s}\EE_k^{\xi}
  (1\wedge e^{Z_{\ell,k}})+\frac{a_{1,k}^N}{N}+\frac{a_{2,k}^N}{N} \nonumber\\
  &=\frac{1}{N} \sum_{j=1}^N \lambda_j^2j^{2s}
  \Gamma(S_k^N)+\frac{a_{1,k}^N}{N}+\frac{a_{2,k}^N}{N} \label{useiniii2},
\end{align}
having used \eqref{approx4} and \eqref{GammaGamma} and having set
\be\label{a2kn}
a_{2,k}^N:=2\ell^2\sum_{j=1}^N \lambda_j^2j^{2s}\EE_k^{\xi}
\left( (1\wedge e^{R_j})- (1\wedge e^{Z_{\ell,k}})  \right).
\ee
Therefore 
\begin{align}\label{discrint}
\frac{2\ell^2}{N} \sum_{k=0}^{[TN]}\EE_k\nors{\gamma\kpo \C^{1/2}\xi\kpo^N}^2
= \frac{1}{N}\sum_{k=0}^{[TN]} \sum_{j=1}^N \lambda_j^2j^{2s}
  \Gamma(S_k^N)+\sum_{k=0}^{[TN]}\left(\frac{a_{1,k}^N}{N}+\frac{a_{2,k}^N}{N}
  \right).
\end{align}
If we prove that
\be\label{a1kna2kntozero}
\frac{1}{N}\exo\sum_{k=0}^{[TN]}\lv a_{1,k}^N \rv \ra 0 \quad
 \mbox{and} \quad 
\frac{1}{N}\exo\sum_{k=0}^{[TN]}\lv a_{2,k}^N \rv \ra 0, 
\ee
 then (\ref{m1goestoint}) follows from (\ref{diff+err}), (\ref{meanerrorterm}), (\ref{discrint}) and the above two limits. We therefore move on to proving   the limits 
in (\ref{a1kna2kntozero}). Let us start from the latter:
\begin{align}
\exo \lv a_{1,k}^N \rv &\les \exo \sum_{j=1}^N \lambda_j^2j^{2s}
\left(\EE_k
  \lv(1\wedge e^{Q})-(1\wedge e^{R_j})\rv^2\right)^{1/2} \nonumber \\
 & \les \exo\sum_{j=1}^N \lambda_j^2j^{2s}
\left(\EE_k \lv Q- R_j\rv^2\right)^{1/2}\nonumber \\
  &\les \exo \sum_{j=1}^N \lambda_j^2j^{2s}
\left(\EE_k \lv Q- R\rv^2\right)^{1/2}\label{1bit}\\
&+\exo\sum_{j=1}^N \lambda_j^2j^{2s}
\left(\EE_k \lv R- R_i\rv^2\right)^{1/2}.\label{2bit}
\end{align} 
The addend \eqref{1bit} tends to zero as $N \ra \infty$ by using \eqref{exprn} and \eqref{decompQvera}. For (\ref{2bit}) instead we have, by \eqref{defRi}, 
\begin{align*}
\eqref{2bit}&\les\sum_{j=1}^N \lambda_j^2j^{2s} \exo\left( \EE_k \lv R- R_i\rv^2\right)\\
&\les \sum_{j=1}^N \lambda_j^2j^{2s} \exo \left(\frac{1}{N}+ \frac{\lv \zeta\kjn\rv^2}{N}\right).
\end{align*}
The first limit in (\ref{a1kna2kntozero}) now follows from (\ref{lembound22}). The second limit in (\ref{a1kna2kntozero}) can be shown analogously, using this time  the bounds \eqref{wassrir} and  (\ref{wassrzlk}). 

Finally, to show (\ref{meanerrorterm}), observe that from (\ref{defen}), 
$$
\nors{\EE_k(x\kpo^N-x\kkn)}^2 \les \frac{\nors{\Theta(x\kkn, S\kkn)}^2}{N^2}+
\frac{\nors{e_k^N}^2}{N^2}.
$$
The desired result now follows from Lemma \ref{lem:stimaEkn}, \eqref{boundonmomentsofx}  and the bound
$$
\nors{\Theta(x,S)}\les 1+ \nors{x},
$$
(which is a consequence of the definition \eqref{Theta} and \eqref{C}).

\item[ii)] Condition {\bf ii)} of Lemma \ref{lem:finidimdistr+tightness} can be shown to hold with similar calculations, so we will not show the details. \item[iii)] It will suffice to show that 
$$
 \lim_{N \ra \infty} \frac{1}{N} \EE_{x_0}
\sum_{k=0}^{[TN]} \EE_k (\nors{M_k^{1,N}}^2 
{\bf 1}_{\{\nors{M_k^{1,N}}^2 > \epsilon N\}})=0\,.
$$
Using the Markov inequality, 
\begin{align*}
\EE_k (\nors{M_k^{1,N}}^2 
{\bf 1}_{\{\nors{M_k^{1,N}}^2 > \epsilon N\}})
&\leq \left( \EE_k \nors{M_k^{1,N}}^4 \right)^{1/2}
\left( \mathbb{P}\, 
\{\nors{M_k^{1,N}}^2 > \epsilon N \} \right)^{1/2}\\
& \leq \frac{1}{N \epsilon} \EE_k \nors{M_k^{1,N}}^4\,.
\end{align*}
By \eqref{useiniii},  \eqref{chaininh} and \eqref{c1/2xi}, 
$$
\frac{1}{N} \EE_k \nors{M_k^{1,N}}^4 \les \EE_k\nors{(x\kpo^N-x_k^N)}^4  \les \frac{1}{N^2}
\EE_k \nors{\gamma\kpo \C^{1/2}\xi\kpo^N}^4 \les \frac{1}{N^2}. 
$$
Therefore
\begin{align}
\frac{1}{N}  \EE_{x_0}
\sum_{k=0}^{[TN]} \EE_k (\nors{M_k^{1,N}}^2 
{\bf 1}_{\{\nors{M_k^{1,N}}^2 > \epsilon N\}})& \les 
\frac{1}{N^2 \epsilon}
\EE_{x_0}
\sum_{k=0}^{[TN]} \EE_k (\nors{M_k^{1,N}}^4) \les \frac{1}{N^2} \,.
\end{align}
\end{description} 
\end{proof}

\section*{Appendix A}
\begin{proof}[Proof of \eqref{c1/2xi}]
We just need to prove it for $p$ even. So let $q\geq 1$; then, by the weighted Jensen's inequality
$$
\EE\nors{\C^{1/2}\xi^N}^{2q} \leq \EE \left(\sum_{i=1}^{\infty} i^{2s}
\lambda_i^2 \lv\xi\inn\rv^2\right)^q  \leq
(Trace (\C_s))^{q-1} \sum_{i=1}^{\infty} i^{2s}
\lambda_i^2 \EE\lv\xi\inn\rv^{2q}< \infty.
$$
Alternatively, one can observe that \eqref{c1/2xi} is a consequence of Fernique's theorem.
\end{proof}
Before proving Lemma \ref{lem:propofDandGamma}, we recall the following fact, which has already been pointed out in Section \ref{subs:rellit}.
\begin{remark}\label{rem:DandG}\textup{
We recall that for $X\in \R_+$ and $b \in \R$,   $\dl(X)=\mathcal{G}_{\ell \sqrt{2}}(X,1)$, where $\mathcal{G}_{\ell}(X,b)$ is the drift function defined in \cite[(1.7)]{JLM12MF}. Analogously, our $\Gl(X)$ is $\Gamma_{\ell \sqrt{2}}(X,1)$, where 
$\Gl(X,b)$ is defined in  \cite[(1.6)]{JLM12MF}.} {\hfill$\Box$}
\end{remark}
\begin{proof}[Proof of Lemma \ref{lem:propofDandGamma}]
 The boundedness of $\dl$ and $\Gl$ follows from Remark \ref{rem:DandG} and  \cite[Lemma 2]{JLM12MF}.  Lipshitzianity follows simply observing that both functions have bounded derivative, indeed
\begin{align}
\frac{d}{dx}\dl (x) &=  \ell^2 \dl(x)-\frac{\ell^3}{\sqrt{\pi}}\left(  \frac{1}{\sqrt{x}}+\frac{1}{2 \, x^{3/2}} \right)
e^{-\frac{\ell^2}{4x}}  \nonumber \\
\frac{d}{dx}\Gl (x) &=  \ell^2 \dl(x) -  \frac{\ell^3}{\sqrt{\pi x}}  e^{-\frac{\ell^2}{4x}}.\label{derG}
\end{align}
Global Lipshitzianity of $\sqrt{\Gl}$ then follows after observing that $\Gl$ is bounded below away from zero (see \eqref{defGamma}). 

We now want to show that the derivative of $\al(x)$ is bounded. From the definition of $\al$ (equation \eqref{defAl}) we have 
\be\label{derAl}
\pa_x\al (x)= -2\dl(x)-2x\,\pa_x\dl(x)+\pa_x\Gl(x) \, . 
\ee
We will prove that 
\be\label{limal}
\lim_{x\ra +\infty }\pa_x\al(x) = 0\,.  \qquad  
\ee
 Because $\pa_x\al$ is a continuous function on $[0,+\infty)$, \eqref{limal} implies the  boundedness of $\pa_x\al(x)$. In order to prove \eqref{limal} we will prove that all the addends on the RHS of  \eqref{derAl} tend to zero (see also Figure \ref{fig:deral} below).
\begin{figure}[ht]
\centering
\includegraphics[height=7cm]{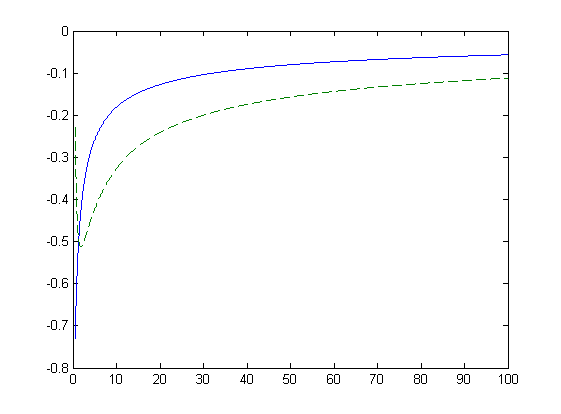}
\caption{Plots of the function $\pa_x\al(x)$ for $\ell=1$ and $\ell=2$ (dashed line).}
\label{fig:deral}
\end{figure}

\begin{itemize}
\item First of all, let us prove 
\be\label{limdl}
\lim_{x \ra + \infty}\dl(x)=0. 
\ee
The above limit follows from the definition of  $\dl$ \eqref{defD} by simply  applying de l'Hopital's rule:
\begin{align}
\lim_{x\ra +\infty}\frac{ \Phi\left( \frac{\ell (1-2x)}{\sqrt{2x}}\right)}{e^{-\ell^2 (x-1)}} & =
\lim_{x\ra +\infty} \frac{e^{-\ell^2(1-2x)^2/4x}}{\sqrt{2\pi}\,\ell^2 e^{-\ell^2 (x-1)}}
\left( \frac{\ell}{2\sqrt{2}x^{3/2}}+\frac{\ell}{\sqrt{2x}} \right) \label{frz} \\
& = \lim_{x\ra +\infty} e^{-\ell^2/4x}\frac{1}{\ell\sqrt{\pi}}\left( \frac{1}{4 x^{3/2}}+ \frac{1}{2\sqrt{x}}\right)=0.  \nonumber
\end{align}

\item From \eqref{derG} and \eqref{limdl}, also $\pa_x \Gl(x)\ra 0$ as $x \ra +\infty$. 
\item Now the second addend:
$$
\lim_{x\ra +\infty}-2x\, \pa_x \dl(x)=0.
$$
Indeed, 
$$
-2x\pa_x\dl = -2x \ell^2 \dl(x)+ 2\frac{\ell^3}{\sqrt{\pi}}\left( \sqrt{x}+\frac{1}{2\sqrt{x}}  \right)
e^{-\ell^2/4x}\,,
$$
therefore
\begin{align*}
\lim_{x\ra +\infty} -2x\pa_x\dl & =\lim_{x\ra +\infty} -4\ell^4\, \frac{x\, \Phi\left( \frac{\ell (1-2x)}{\sqrt{2x}} \right) }{e^{-\ell^2(x-1)}}  
+2\frac{\ell^3}{\sqrt{\pi}} \sqrt{x}e^{-\ell^2/4x}\\
&= \lim_{x\ra +\infty} -4\ell^4\left[ \frac{\Phi\left( \frac{\ell (1-2x)}{\sqrt{2x}} \right) - e^{-\ell^2 (1-2x)^2/4x}   \left( \frac{\ell}{4\sqrt{\pi x}}+ \frac{\ell \sqrt{x}}{2\sqrt{\pi}} \right)}
{-\ell^2 e^{-\ell^2(x-1)}}\right] \\
&+ \lim_{x\ra +\infty}       2\frac{\ell^3}{\sqrt{\pi}} \sqrt{x}e^{-\ell^2/4x}\\
&\stackrel{\eqref{frz}}{=} \lim_{x\ra +\infty} -2\frac{\ell^3}{\sqrt{\pi}} \sqrt{x}e^{-\ell^2/4x}
+2\frac{\ell^3}{\sqrt{\pi}} \sqrt{x}e^{-\ell^2/4x} =0
\end{align*}
\end{itemize}

Finally,  the sign of $\al(x)$ is studied in 
\cite[page 258]{MR2137324}.
\end{proof}

 We include here plots of the functions $\dl(x)$ and  $\Gl(x)$, Figure \ref{fig:dl} and Figure \ref{fig:gl} below. 
\begin{figure}[ht]
\centering
\includegraphics[height=7cm]{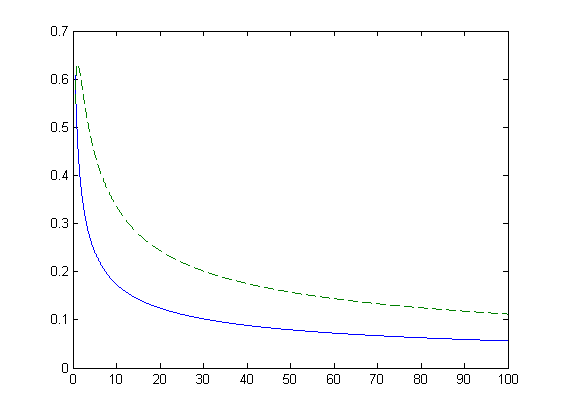}
\caption{Plots of the function $\dl(x)$ for $\ell=1$ and $\ell=2$ (dashed line).}
\label{fig:dl}
\end{figure}
\begin{figure}[ht]
\centering
\includegraphics[height=7cm]{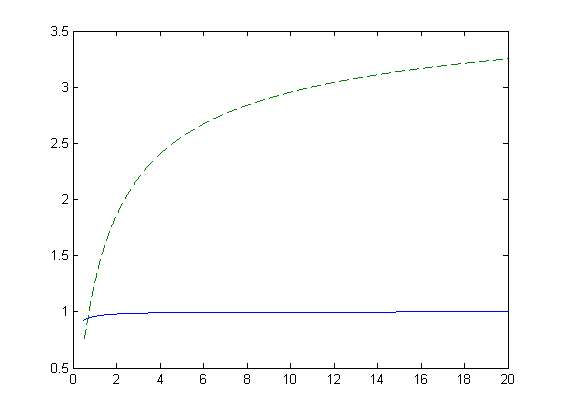}
\caption{Plots of the function $\Gl(x)$ for $\ell=1$ and $\ell=2$ (dashed line).}
\label{fig:gl}
\end{figure}

\begin{proof}[Proof of first equality in \eqref{appdr1}] We want to prove
$$\EE_k^{\xi, \gamma}(\gamma\kpo\, \xi\kpo\inn) 
=  \EE_k^{\xi}(\alpha\kpo\, \xi\kpo\inn).$$
Let $f_{\gamma\kpo, \xi\kpo}(\gamma, \xi)$ be the joint distribution (given $x_k$) of $\gamma\kpo$ and  $\xi\kpo$. Then 
\begin{align*}
\EE_k^{\xi, \gamma}(\gamma\kpo\, \xi\kpo\inn)
& = \iint \gamma\, \xi^i \, f_{\gamma\kpo, \xi\kpo}(\gamma, \xi) 
 =  \int \xi^i \int \gamma f_{\xi\kpo}(\xi) f_{\gamma\kpo\vert \xi\kpo}(\gamma\vert \xi) \nonumber\\
&= \int \xi^i f_{\xi\kpo}(\xi) \alpha\kpo(\xi)=
\EE_k^{\xi}(\alpha\kpo\, \xi\kpo\inn).
\end{align*}
\end{proof}

\section*{Appendix B}
Before starting the proofs of the various lemmata, we  derive equation \eqref{z=x+psi} below, which will be repeatedly used throughout this appendix. 

From \eqref{defzetakn}  and recalling that $\left[ \nabla \Psi^N(x_k) \right]^i$ denotes the $i$-th component of 
$\nabla \Psi^N(x_k)$, 
\be\label{zzzz1}
\zeta\kn = \frac{x\kn}{\lambda_i} + \lambda_i  \left[ \nabla \Psi^N(x_k) \right]^i \,.
\ee
Using the  bound \eqref{c12dpsi111} we have
\begin{align*}
\lv\lambda_i \left[ \nabla \Psi^N(x_k) \right]^i\rv^2 \leq 
\sum_{i=1}^{\infty}\lv\lambda_i \left[ \nabla \Psi^N(x_k) \right]^i\rv^2
\les \nors{x}^{2\varsigma}+ \nors{x}^{2} , 
\end{align*}
hence
$$
\lv\lambda_i \left[ \nabla \Psi^N(x_k) \right]^i\rv^p
\les \nors{x}^{\varsigma p}+ \nors{x}^{p}.
$$
Therefore for every $p \geq 0$
\be\label{z=x+psi}
\lv \zeta\kn \rv^{p} \les  \frac{\lv x\kn \rv^{p}}{\lambda_i^{p}}+ (\nors{x_k}^{\varsigma p}+ \nors{x_k}^{p}),
\ee
\begin{proof}[Proof of Lemma \ref{lem:bound1}] We will prove, in order,  the bounds
\eqref{boundonmomentsofsk}, \eqref{boundonmomentsofx}  and \eqref{lembound2ebounded}. 

$\bullet $  { \bf  Proof of \eqref{boundonmomentsofsk}}. We can act as in \cite[Proof of Lemma 9]{Pillai2014} (in comparing our proof with \cite[Proof of Lemma 9]{Pillai2014} set $\delta=N^{-1}$ in \cite{Pillai2014}).  Looking at \cite[Proof of Lemma 9]{Pillai2014}, all we need to show is
$$
\exo (S_{k+1}^N)^{2m} - \exo (S_{k}^N)^{2m} \les \frac{1}{N} (1+ \exo (S_{k}^N)^{2m}).
$$
A close inspection of the method of proof used in \cite{Pillai2014} reveals that  showing the above boils down to proving the following two bounds:
\be\label{v1}
\lv \EE_k \left[S_{k+1}^N-S_k^N \right] \rv \les \frac{1}{N} \left( 1+ S_k^N \right), \quad p\geq 1.
\ee
and
\be\label{v2}
\left( \EE_k\lv S_{k+1}^N-S_k^N \rv^p\right)^{1/p} \les \frac{1}{\sqrt{N}} (1+ S_k^N)\,.
\ee
Let us start with proving \eqref{v1}. To this end let us observe that by  \eqref{defEn} and \eqref{approximatedrifts}, one has 
\be\label{v11}
\EE_k \left[S_{k+1}^N-S_k^N \right]= \frac{E_k^N}{N}+ \frac{\al(S_k^N)}{N}\,.
\ee
Now  notice that 
\be\label{xnls}
\frac{\nors{x_k^N}^2}{N} = \frac{1}{N} \sum_{i=1}^N
i^{2s} \lv x\kn\rv^2 = \frac{1}{N} \sum_{i=1}^N
i^{2s} \lambda_i^2 \frac{\lv x\kn\rv^2}{\lambda_i^2} \les \frac{1}{N} \sum_{i=1}^N
 \frac{\lv x\kn\rv^2}{\lambda_i^2} = S_k^N\, ,
\ee
where in the last inequality we have used the fact that $\sum_i \lambda_i^2 i^{2s}$ is convergent and therefore $\lambda_i^2 i^{2s}$ is bounded. To bound the RHS of   \eqref{v11}, we recall that from the proof of Lemma \ref{lem:AlN-Al} one has $E_k^N= E_{1,k}^N+ E_{2,k}^N+\EE_k \hat{r}^N$ (see \eqref{decompee}).  Acting like we did to obtain  \eqref{uselater},  one has
\be\label{v12}
\lv E_{1,k}^N \rv \les \frac{1}{\sqrt{N}} \left( S_k^N+ \frac{\nors{x_k^N}^2}{N}\right)^{1/2}
\stackrel{\eqref{xnls}}{\les}\frac{1}{\sqrt{N}}(S_k^N+1).
\ee
With steps analogous to those used to obtain  \eqref{eek2}, one also has 
\be\label{v13}
\lv E_{2,k}^N\rv \les \frac{1}{\sqrt{N}} \left( 1+ \nors{x_k^N}^2 \right)^{1/2}
\les (1+S_k^N).
\ee
  Now \eqref{v1} follows from \eqref{decompee},  \eqref{v11}, \eqref{v12}, \eqref{v13}, \eqref{xnls}, \eqref{estrhat} and 
$$
\al(a) \les (1+a), \qquad a\geq 0\, .
$$
To prove \eqref{v2} one can instead just use \eqref{red},  \eqref{rknp} (together with $\gamma_{k+1} \leq 1$) and calculations analogous to those leading to \eqref{estrhat}. This concludes the proof of  \eqref{boundonmomentsofsk}.

$\bullet $ { \bf  Proof of \eqref{boundonmomentsofx}}.  For this bound we will use the same strategy of proof that we used to show \eqref{boundonmomentsofsk}. So we only need to prove
\be\label{mx1}
\nors{\EE_k (x\kpo^N-x_k^N)}\les \frac{1}{N}(1+\nors{x_k^N} )
\ee
and
\be\label{mx2}
\left(\EE_k \nors{x\kpo^N-x_k^N}^p\right)^{1/p}
\les \frac{1}{\sqrt{N}}(1+\nors{x_k^N} )\,.
\ee
Let us start with \eqref{mx1}:
\begin{align*}
\nors{\EE_k (x\kpo^N-x_k^N)} & =\sqrt{\frac{2\ell^2}{N}}
\nors{\EE_k (\gamma\kpo \C^{1/2}\xi^{N}\kpo)}\\
&= \sqrt{\frac{2\ell^2}{N}}
\left(\sum_{i=1}^N i^{2s} \lv \EE_k (\gamma\kpo \lambda_i\xi^{i,N}\kpo)\rv^2 \right)^{1/2}
\end{align*}
We therefore need to estimate $\lv \EE_k (\gamma\kpo \lambda_i\xi^{i,N}\kpo)\rv^2$. In order to do so,  we make the following preliminary observation: from \eqref{decompQvera} and \eqref{defRi} we have
$$
Q(x_k, \xi_{k+1}) = R^i(x_k, \xi\kpo)-
\frac{\ell^2}{N}  \lv \xi_{k+1}\inn\rv^2- \sqrt{\frac{2\ell^2}{N}}
 \zeta_k^{i,N} \xi\kpo\inn + r^N(x_k, \xi_{k+1}). 
$$
As we have already said, $R^i$ contains only terms that do not depend on the noise $\xi\inn\kpo$, therefore we can write
\begin{align*}
\lv \EE_k (\gamma\kpo \lambda_i\xi^{i,N}\kpo)\rv^2   & = 
\lv \EE_k \left[\left( 1 \wedge e^Q \right)   \lambda_i\xi^{i,N}\kpo \right]\rv^2\\
& = \lv \EE_k \left[\left( \left(1 \wedge e^Q\right)  - \left(1 \wedge e^{R^i}\right)\right)   \lambda_i\xi^{i,N}\kpo \right]\rv^2\\
& \leq \lambda_i^2 \lv \EE_k \left( \lv Q-R^i \rv  \lv \xi^{i,N}\kpo \rv \right)\rv^2\\
& \les \lambda_i^2 
\lv \EE_k \frac{\lv \xi^{i,N}\kpo \rv^3}{N} +\EE_k \frac{\lv \zeta_k^{i,N} \rv\lv \xi^{i,N}\kpo \rv^2}{\sqrt{N}} + \EE_k \lv \xi^{i,N}\kpo  r^N\rv    \rv^2 \,.
\end{align*}
Using \eqref{57p} and the fact that $\zeta_k$ depends only on $x_k$, we have
\begin{align}
\lv \EE_k (\gamma\kpo \lambda_i\xi^{i,N}\kpo)\rv^2   & \les 
 \lambda_i^2  \left( \frac{1}{N^2}+ \frac{\lv \zeta_k^{i,N} \rv^2}{{N}}\right)\nonumber\\
& \stackrel{\eqref{z=x+psi}}{\les}  \lambda_i^2 
\left( \frac{1}{N^2}+ \frac{\lv x_k^{i,N}\rv^2}{\lambda_i^2 N} + \frac{\nors{x_k}^2+1}{N}\right). 
\label{ekgxi}
\end{align}
\eqref{mx1} is  now a simple consequence of the above bound. 
For \eqref{mx2}, instead, we just use $\gamma\kpo\leq 1$ and 
$$
\left(\EE_k \nors{x\kpo^N-x_k^N}^p\right)^{1/p}\les 
\left( \frac{1}{N^{p/2}} \EE_k \nors{\C^{1/2}\xi\kpo}^p\right)^{1/p} \stackrel{\eqref{c1/2xi}}{\les}\frac{1}{\sqrt{N}}.
$$

$\bullet $ { \bf  Proof of \eqref{lembound2ebounded}}.   By acting as we do to obtain \eqref{z=x+psi} (with $p=2$), it is clear that we only need to show 
\be\label{expmomentsskn}
\exo e^{c S_k^N}< \infty \quad \mbox{ and } \quad \exo e^{\frac{c}{N} \| x_k^N\|^2_s}< \infty, 
\quad \mbox{uniformly over } 0 \leq k \leq [TN]+1,
\ee
for all $c>0$. However, by \eqref{xnls}, proving the second of the above bounds boils down to proving the first, which is therefore the only one we need to concentrate on. Such a bound  is a simple consequence of  \eqref{boundonmomentsofx}. Indeed, on inspection of the proof of \eqref{boundonmomentsofx}, one finds that the constants $\bar{c}$ appearing on the RHS of \eqref{boundonmomentsofx} grows at most like $d^m$, where $d>0$ is some positive constant independent of $m, N$ and $k$. Therefore
$$
\exo e^{\frac{c}{N} \| x_k^N\|^2_s} =  \sum_{m=0}^{\infty} c^m  \exo \frac{\| x_k^N\|^{2m}_s}{N^m m !} = e^{c  \, d^2  /N}\les 1. 
$$
This concludes the proof of the lemma.  
\end{proof}

\begin{proof}[Proof of Lemma \ref{lem:asconv}] This proof is in 2 steps.  The first step proves the first part of the statement, the second step proves the second part. 
\begin{itemize}
\item {\it Step 1}. For every fixed $t>0$ and for every  $\epsilon>0$, 
\be\label{bc}
\sum_{N=1}^{\infty}\mathbb{P}\left( \lv \hat{w}^N(t)\rv>\epsilon  \right)< \frac{1}{\epsilon^4}\sum_{N=1}^{\infty} \exo \left( \hat{w}^N(t)\right)^4 < \infty,
\ee
where $\hat{w}^N$ has been defined in \eqref{whatN}. 
Assuming for the moment that \eqref{bc} holds, by the Borel-Cantelli Lemma \eqref{bc} implies that $\hat{w}^N(t)$ converges to zero almost surely. Because almost sure convergence is preserved under continuous transformations, this means  that $S^{(N)}(t)$ converges almost surely to $S(t)$. We only sketch the proof of \eqref{bc}, as the calculations are completely analogous to those contained in the proof Theorem \ref{thm:weak conv of Skn}.   From \eqref{whatN}, we have 
\begin{align}
\exo \left( \hat{w}^N(t)\right)^4 &\les
\int_0^t \exo \left[ \al^N(\bar{x}\bn (v))-\al(\bar{S}\bn(v)) \right]^4 dv \nonumber\\ 
&+\int_0^t \exo \left[ \al (\bar{S}\bn (v))-\al(S\bn(v)) \right]^4 dv 
+\exo \lv w^N(t)\rv^4. \label{2add}
\end{align}
The estimate of the first and third addend on the right hand side of the above is done by proceeding analogously to what we have done for the proof 
of  Lemma
\ref{lem:AlN-Al} and Lemma \ref{lem:noise2}, respectively. The second addend can be studied with similar calculations (indeed, with calculations analogous to those in Step 2 of this proof).  Therefore we only show how to estimate the first addend, the others can be done with a similar procedure, and we leave it to the reader. 
With the notation introduced in the proof of Lemma 
\ref{lem:AlN-Al}, we have
$$
\exo \int_0^t  \left[ \al^N(\bar{x}\bn (v))-\al(\bar{S}\bn(v)) \right]^4 dv
= \exo \frac{1}{N}\sum_{k=0}^{[TN]}\lv E_k^N\rv^4
$$
and 
$
\lv E_k^N \rv^4 \les \lv E_{1,k}^N \rv^4 + \lv E_{2,k}^N \rv^4 + \lv  \EE_k\hat{r}^N\rv^4
$ (see \eqref{decompee}).
Acting as we did to obtain  \eqref{EEE} and \eqref{eek2}, we find 
\be\label{com1}
\exo\lv E_{1,k}^N \rv^4 \les  \frac{1}{N^2}\qquad
\mbox{and} \qquad  \exo \lv E_{2,k}^N \rv^4 \les \frac{1}{N^2}. 
\ee
Using \eqref{estrhat}, one finds that, overall, 
$$
\exo \int_0^t  \left[ \al^N(\bar{x}\bn (v))-\al(\bar{S}\bn(v)) \right]^4 dv \les 
\frac{1}{N^{2}},
$$
and the sequence $a_N= N^{-2}$ is summable. Similar estimates can be obtained for  the addends in \eqref{2add}. This concludes the proof of the almost sure convergence of $\hat{w}^N$ to zero. 
\item {\it Step 2}. For every $\epsilon>0$, 
\be\label{bc11}
\sum_{N=1}^{\infty}\mathbb{P}\left( \lv \bar{S}\bn(t) - S^{(N)}(t)\rv > \epsilon\right)<  \frac{1}{\epsilon^2}\sum_{N=1}^{\infty} \exo
\lv \bar{S}\bn(t)- S^{(N)}(t)\rv^2< \infty\, .
\ee
Again, if we prove the above, by the B-C Lemma we have almost sure convergence of $\bar{S}\bn (t)$ to $S^{(N)}(t)$ and, by Step 1, to $S(t)$.  From the definitions of  $\bar{S}\bn(t)$ and 
$S^{(N)}(t)$, equation \eqref{interpolantofsk}  and \eqref{piecewiseconstinterS}, respectively, for $(k/N)\leq t < (k+1)/N$, we have
\be\label{starprime}
 \bar{S}\bn(t) - S^{(N)}(t) =   (Nt-k) (S_{k+1}^N-S_k^N) ,
\ee
so that 
\be\label{starrina}
\exo \lv \bar{S}\bn(t) - S^{(N)}(t)\rv^2= \exo \lv (Nt-k) (S_{k+1}^N-S_k^N) \rv^2\leq 
\exo \lv S_{k+1}^N-S_k^N \rv^2 \stackrel{\eqref{diffs1s}}{\les} \frac{1}{N^2}.
\ee

This concludes the proof. 
\end{itemize}
\end{proof}

\begin{proof}[Proof of Lemma \ref{lem:Wassbound} ]
Using \eqref{wasspractical}, the bound \eqref{wassrigi} is a simple consequence of the definitions of $R$ and $G$, indeed

$$
\lv R - G \rv \les \lv \frac{1}{N} \sum_{i=1}^N \lv \xi^i \rv^2  -1 \rv, 
$$
hence
\begin{align}
\EE_k \lv R - G \rv & \leq \left( \EE_k  \lv R - G \rv^2 \right)^{1/2}  \les \left(\EE   \lv \frac{1}{N} \sum_{i=1}^N \lv \xi^i \rv^2  -1 \rv^2\right)^{1/2} \nonumber \\
& = 
\left( Var \left( \frac{1}{N} \sum_{i=1}^N \lv \xi^i \rv^2 \right)\right)^{1/2}\leq \frac{1}{\sqrt{N}}.
\label{RGM2}
\end{align}
We observe now (although we will use this only later) that a similar explicit calculation also shows
\be\label{R-G4M}
\EE_k \lv R - G \rv^4 \les \frac{1}{N^2}. 
\ee
Going back to the proof of the lemma,  the bound \eqref{wassrir} is a direct consequence of the definitions of $R$ and $R^i$.   The inequality 
\eqref{wassrzl} follows from (\ref{defGi}),  (\ref{approx4}), 
  \eqref{defzetakn} and the bound (\ref{c12dpsi}). Indeed, 
$$
 G -  Z_{\ell,k} = \sqrt{\frac{2 \ell^2}{N}} \sum_{j=1}^N \zeta\kjn \xi\kpo\jnn 
-  \sqrt{\frac{2 \ell^2}{N}}  \sum_{j=1}^N \frac{x\kjn}{\lambda_i} \xi \kpo\jnn .
$$
Therefore, by \eqref{defzetakn}, given $x_k$ we have
\be\label{Gzlknormdist}
G -  Z_{\ell,k} \sim  \cN \left( 0, \frac{2 \ell^2}{N} \| \C_N^{1/2}\nabla \Psi^N (x_k^N)\|^2  \right).
\ee
Using (\ref{c12dpsi}) one then has 
\be\label{highmomgz}
\EE_k \lv G -  Z_{\ell,k}\rv^p \les \frac{1+ \nors{x_k^N}^p}{N^{p/2}}, 
\ee
hence \eqref{wassrzl} follows.
Notice that from the above calculations we have 
\be\label{hmrz}
\EE_k \lv R - Z_{\ell,k}\rv^p \les  \frac{1+ \nors{x_k^N}^p}{N^{p/2}}, \quad p \in \{2,4\}. 
\ee
\end{proof}

\begin{proof}[Proof of Lemma \ref{lem:bound2}] We prove the three statements in the order in which they are presented. 

$\bullet$ Proof of the bound  \eqref{lembound22}. 
From \eqref{z=x+psi}, 
\be\label{splitting}
\exo \sum_{i=1}^{N} i^{2s} \lambda_i^2  \lv \zeta\kn \rv^{p} Y
\les \exo \sum_{i=1}^{N} i^{2s} \lambda_i^2  \frac{\lv x\kn \rv^{p}}{\lambda_i^{p}} Y
+   \sum_{i=1}^{N} i^{2s} \lambda_i^2 \exo [(1+ \nors{x_k}^{p}) Y]  \,.
\ee
The second addend is bounded thanks to the assumption on $Y$ and \eqref{boundonmomentsofx}.   As for the first addend, (by the weighted Jensen's inequality) this is bounded (for any $p\geq 0$) as soon as we can prove that 
$$
 v_k^{N}(p):=\sum_{i=1}^{N} i^{2s} \lambda_i^2  \frac{\lv x\kn \rv^{2p}}{\lambda_i^{2p}}
$$
has bounded first  moment (for every $p$), i.e. we want to prove $\exo v_k^N(p)< c$ where $c>0$ is a constant independent of $N$ and $k \in \{0,1 \dd [TN]\}$ (but possibly dependent on $p$). Observe that if $p=1$ then $v_k^N(1)=\nors{x_k^N}^2$, so the statement is a consequence of \eqref{boundonmomentsofx}. So we can restrict to $p\geq 2$. 
Denoting by $d$ a generic constant (that does not depend on $N$), the value of which will change from line to line, we write 
\begin{align*}
\exo v\kpo^N(p) &= \exo\sum_{i=1}^{N} i^{2s} \frac{\lambda_i^2}{\lambda_i^{2p}}  \left( x\kn + \sqrt{\frac{2\ell^2}{N}}\lambda_i \gamma\kpo\xi\kpo\inn\right)^{2p} \\
&\leq \exo v_k^N(p)+ \exo
d \sum_{i=1}^N  i^{2s} \frac{\lambda_i^2}{\lambda_i^{2p}} 
\sum_{m=0}^{2p-1} (x\kn)^m \EE_k  \left( 
 \sqrt{\frac{2 \ell^2}{N}} \lambda_i \gamma\kpo\xi\kpo\inn
  \right)^{2p-m}.
\end{align*} 
If, in the above summation, the index $m$ is smaller than $2p-2$, i.e. $0 \leq m \leq 2p-2$, then $p-m/2\geq 1$ and we have the estimate
\begin{align}
\exo i^{2s} \frac{\lambda_i^2}{\lambda_i^{2p}}  (x\kn)^m \EE_k  \left( 
 \sqrt{\frac{2 \ell^2}{N}} \lambda_i \gamma\kpo\xi\kpo\inn
  \right)^{2p-m} &\les \exo i^{2s} \frac{\lambda_i^2}{N} 
\frac{\lv x\kn\rv^m}{\lambda_i^m} \EE_k \lv \gamma\kpo \xi\kpo\inn\rv^{2p-m}\nonumber\\
& \les \exo i^{2s} \frac{\lambda_i^2}{N} 
\frac{\lv x\kn\rv^m}{\lambda_i^m}\\
&\les \frac{1}{N} i^{2s}\lambda_i^2 \exo \left( \frac{\lv x\kn\rv^{2p}}{\lambda_i^{2p}}+1\right).
 \label{estvm1}
\end{align}
If $m=2p-1$ we instead use \eqref{ekgxi} and obtain
\begin{align}
\exo i^{2s} \frac{\lambda_i^2}{\lambda_i^{2p}} \frac{1}{\sqrt{N}}\lv x\kn\rv^{2p-1} \lv \EE_k (\lambda_i \gamma_k \xi\kpo\inn)\rv &\les \exo 
i^{2s} \lambda_i^2  \frac{\lv x\kn\rv^{2p-1}}{\lambda_i^{2p-1}}
\frac{1}{\sqrt{N}} \left(\frac{\nors{x_k^N}}{\sqrt{N}}+
\frac{\lv x\kn\rv}{\lambda_i} \frac{1}{\sqrt{N}}\right)\nonumber\\
& \les \exo \frac{i^{2s}\lambda_i^2}{N} \left(\frac{\lv x\kn \rv^{2p}}{\lambda_i^{2p}} + \nors{x}^{2p}\right).\label{estvm2}
\end{align}
To obtain the last inequality we used Young's inequality with exponents $2p/(2p-1)$ and $2p$, as follows:
$$
\frac{\lv x\kn\rv^{2p-1}}{\lambda_i^{2p-1}} \cdot 
\nors{x_k^N} \leq   \frac{\lv x\kn \rv^{2p}}{\lambda_i^{2p}} +  \nors{x_k^N}^{2p}. 
$$
From \eqref{estvm1} and \eqref{estvm2} (and using \eqref{boundonmomentsofx}) we then have
$$
\exo v\kpo^N(p) \leq  \left( 1+\frac{d}{N}\right)\exo v_k^N(p) + \frac{d}{N}. 
$$
Iterating the above $[TN]$ times we get
$$
\exo v\kpo^N(p) \leq \left( 1+\frac{d}{N} \right)^{[TN]} v_0(p) + d < \infty,
$$
having denoted $v_0(p):=\sum_{i=1}^{\infty} i^{2s} \lambda_i^2  \frac{\lv x_0^i \rv^{2p}}{\lambda_i^{2p}}$. Notice that if $x_0 \in \hsintt$ then the series $v_0(p)$ is convergent for every $p\geq 0$.

$\bullet${ Proof of the bound \eqref{lembound26}. Set 
\be\label{dddd}
\tilde{\zeta}\kn:=\frac{\lv x\kn\rv }{\lambda_i}, 
\ee
 so that 
$$
S_k^N= \frac{1}{N}\sum_{i=1}^N \frac{\lv x\kn\rv^2}{\lambda_i^2}=
\frac{1}{N}\sum_{i=1}^N \lv \tilde{\zeta}\kn\rv^2.
$$
From \eqref{z=x+psi} we then have
\begin{align}
J_k^N &:= \sum_{i=1}^N i^{2s} \lambda_i^2\frac {\lv \zeta\kn\rv^{p}}{(N S_k^N)^{\alpha}} \nonumber \\
& \les \sum_{i=1}^N i^{2s} \lambda_i^2\frac{\lv \tilde{\zeta}\kn \rv^p}
{(N S_k^N)^{\alpha}} +  \sum_{i=1}^N i^{2s} \lambda_i^2\frac{\nors{x_k^N}^{p\varsigma}+ \nors{x_k^N}^p}{(N S_k^N)^{\alpha}}.\label{lastlast}
  \end{align}
  For the first addend in \eqref{lastlast} we have
\begin{align*}
\exo \sum_{i=1}^N i^{2s} \lambda_i^2\frac{\lv \tilde{\zeta}\kn \rv^p}
{(N S_k^N)^{\alpha}}& = \exo \sum_{i=1}^N  i^{2s} \lambda_i^2 \lv \tilde{\zeta}\kn\rv^{p-2\alpha} 
\left(\frac{\lv \tilde{\zeta}\kn\rv^2}{ \sum_{i=1}^N\lv \tilde{\zeta}\kn\rv^2}\right)^{\alpha}\\
&  
\les \exo v_k^N ((p-2\alpha)/2),
\end{align*}
thanks to the obvious estimate $ \left(\lv \tilde{\zeta}\kn\rv^2/\sum_{i=1}^N\lv \tilde{\zeta}\kn\rv^2\right) \leq 1$. Using \eqref{xnls}, one can easily see that also the expected value of the second addend is bounded if $\varsigma p \geq 2 \alpha$, as
$$
\exo \frac{\nors{x_k^N}^{p\varsigma}}{(N S_k^N)^{\alpha}} =\exo  \frac{\nors{x_k^N}^{2 \alpha}}{(N S_k^N)^{\alpha}} \nors{x_k^N}^{\varsigma p - 2 \alpha}  \les \exo \nors{x_k^N}^{\varsigma p - 2 \alpha}  < \infty.
$$
Therefore,  $\exo J_k^N < \infty$, uniformly over $k$ and $N$. From  the weighted Jensen inequality one can also see that $\exo \lv J_k^N \rv^q < \infty$ for all $q\geq 1$. 
To conclude, for $t_k \leq t < t_{k+1}$ we write
\begin{align*}
\exo J_k^N= \exo J_k^N {\bf 1}_{\{S_k^N\geq (S(t)/2)\}}+
\exo J_k^N {\bf 1}_{\{S_k^N<(S(t)/2)\}}.
\end{align*}
Now the first addend:
\begin{align*}
\exo J_k^N {\bf 1}_{\{S_k^N\geq (S(t)/2)\}} &=
\exo \frac{1}{N^{\alpha}}\sum_{i=1}^N i^{2s}\lambda_i^2\frac{\lv \zeta\kn\rv^p}{(S_k^N)^{\alpha}}{\bf 1}_{\{S_k^N\geq (S(t)/2)\}} \\
& \les \frac{1}{ (S(t))^{\alpha} N^{\alpha}}\exo \sum_{i=1}^N i^{2s}\lambda_i^2 \lv \zeta\kn\rv^p \stackrel{\eqref{lembound22}}{\lra} 0.
\end{align*}
The above limit follows from the assumption $S_0\geq \epsilon$ and 
\eqref{solofODEisbdd} (which, combined, guarantee  $\min\{\epsilon,1\}\leq S(t)$). 
The second addend: 
\begin{align*}
\exo J_k^N {\bf 1}_{\{S_k^N<(S(t)/2)\}} &\leq (\exo \lv J_k^N \rv^2)^{1/2}
\left( \mathbb{P}\left( (S_k^N-S(t))<-\frac{S(t)}{2} \right)\right)^{1/2}\\
&\les \left( \mathbb{P}\left( \lv S_k^N-S(t)\rv >\frac{S(t)}{2} \right) \right)^{1/2}
\les   \frac{1}{S(t)}\left( \exo \lv S_k^N-S(t)\rv^2 \right)^{1/2}.
\end{align*}
The statement now follows from Lemma \ref{corollarylemma}, \eqref{piecewiseconstinterS},  the assumption $S_0\geq \epsilon$ and 
\eqref{solofODEisbdd}.
}

$\bullet$ Finally, we turn to the proof of  \eqref{lembound24}. 
\be\label{dec}
\frac{1}{1+ \lv R\rv \sqrt{N}} = \eta + \frac{1}{1+ \lv Z_{\ell,k}\rv \sqrt{N}} , 
\ee
where
\begin{align*}
\lv \eta \rv & = \lv \frac{1}{1+ \lv R\rv \sqrt{N}} - \frac{1}{1+ \lv Z_{\ell,k}\rv \sqrt{N}}\rv\\
& \leq  \frac{\sqrt{N} \lv \lv R \rv - \lv Z_{\ell,k}\rv\rv}{1+ \lv Z_{\ell,k}\rv \sqrt{N}}
\leq \frac{\sqrt{N}\lv R-Z_{\ell,k} \rv}{1+ \lv Z_{\ell,k}\rv \sqrt{N}},
\end{align*}
having used $\lv \lv a \rv - \lv b \rv\rv\leq \lv a-b \rv$. Consequently, 
\begin{align}
\EE_k \lv \eta \rv^2 & \leq N \left( \EE_k \lv R-Z_{\ell,k} \rv^4 \right)^{1/2}
  \left( \EE_k \frac{1}{(1+ \lv Z_{\ell,k}\rv \sqrt{N})^4} \right)^{1/2}\\
& \stackrel{\eqref{hmrz}}{\les} {(1+\nors{x_k^N}^2)} 
\left( \EE_k \frac{1}{(1+ \lv Z_{\ell,k}\rv \sqrt{N})^4} \right)^{1/2}. 
\label{pen}
\end{align}
Also, from \eqref{dec},
\begin{align}\label{pen1}
\EE_k \frac{1}{(1+ \lv R\rv \sqrt{N})^2}& \les \EE_k \eta^2 + \EE_k \frac{1}{(1+ \lv Z_{\ell,k}\rv \sqrt{N})^2}\,.
\end{align}
Now notice that, given $x_k$,  $Z_{\ell, k}$ is Gaussian with variance $2 \ell^2 S_k^N$ and mean $-\ell^2$. Therefore, for every $p>1$, 
\begin{align*}
\EE_k \frac{1}{(1+ \lv Z_{\ell,k}\rv \sqrt{N})^p} &= \int_{\R}
\frac{1}{(1+ \lv x\rv \sqrt{N})^p} \frac{1 }{\sqrt{4 \pi \ell^2 S_k^N}} e^{-(x+ \ell^2)^2/ (2\ell^2 S_k^N)} dx\\
& \les \int_{\R}  \frac{1}{(1+ \lv x\rv \sqrt{N})^p} \frac{1 }{\sqrt{S_k^N}} dx\\
& = \frac{1}{\sqrt{N}}\int_{\R} \frac{1}{(1+ \lv y\rv )^p} \frac{1 }{\sqrt{S_k^N}} dy \les  \frac{1 }{ \sqrt{ N S_k^N}}. 
\end{align*}
The proof can now be concluded by combining \eqref{pen1}, \eqref{pen} and the above.
\end{proof}
\begin{remark}\label{rem810}
\textup{Notice that the proof of \eqref{lembound26} is the only place in which we actually use \eqref{der-s} instead of the slightly more general assumption $\|\C^{1/2} \nabla\Psi\|\leq \|\nabla \Psi\|_{-s}\les 1+ \nors{x}$. This is to avoid technicalities and streamline the proof. 
}\hfill{$\Box$}
\end{remark}

{\bf Acknowledgments}  We are extremely grateful  to the anonimous referee for his/her careful reading, for spotting  mistakes in an earlier version and for comments that helped improving  the paper.

\bibliographystyle{alpha}
\bibliography{difflim}

\end{document}